\documentclass[11pt,reqno]{amsart}

\usepackage{amsmath, amsthm, amssymb}
\DeclareMathOperator{\tr}{tr}
\usepackage{amsfonts}
\usepackage{bm,mathrsfs}
\usepackage{enumitem}

\usepackage[ansinew]{inputenc}
\usepackage{epsfig}
\usepackage{graphicx}
\usepackage[english]{babel}
\usepackage{mathrsfs}
\usepackage{mathtools}
\DeclarePairedDelimiter{\norm}{\lVert}{\rVert}

\usepackage{thmtools}
\theoremstyle{plain}
\declaretheorem[title=Theorem, parent=section]{theorem}
\declaretheorem[title=Lemma,sibling=theorem]{lemma}
\declaretheorem[title=Proposition,sibling=theorem]{proposition}
\declaretheorem[title=Corollary,sibling=theorem]{corollary}

\theoremstyle{definition}
\declaretheorem[title=Definition,sibling=theorem]{definition}
\declaretheorem[title=Remark,sibling=theorem]{remark}
\declaretheorem[title=Remark, numbered=no]{remark*}

\declaretheorem[title=Assumption, numbered=no]{assumption*}

\numberwithin{equation}{section}

\usepackage[backgroundcolor=white, bordercolor=blue,
linecolor=blue]{todonotes}

\usepackage{dsfont}
\usepackage{bbm}

\newcommand{\R}{\mathbb{R}}

\DeclareMathOperator{\Id}{Id}

\DeclareMathOperator*{\osc}{osc}

\newcommand{\average}{{\mathchoice {\kern1ex\vcenter{\hrule height.4pt
width 6pt depth0pt} \kern-9.7pt} {\kern1ex\vcenter{\hrule
height.4pt width 4.3pt depth0pt} \kern-7pt} {} {} }}

\usepackage[hypertexnames=false]{hyperref}

\begin{document}
\allowdisplaybreaks
\title[Asymptotic gradient regularity]{Asymptotic gradient regularity for degenerate or singular fully nonlinear parabolic equations}

\author{Jiangwen Wang}

\address{School of Mathematics and Shing-Tung Yau Center of Southeast University, Southeast University, Nanjing 211189, P.R. China}
\email{\url{jiangwen\_wang@seu.edu.cn}}


\date{\today}

\keywords{degenerate or singular equation; fully nonlinear parabolic equation; regularity}

\subjclass[2020]{35K55, 35D40}

\allowdisplaybreaks

\begin{abstract}
This is a continuation of our recent work~\cite{WZJ26} on asymptotic gradient regularity for nonlinear parabolic equations. In this paper, we consider bounded viscosity solutions of the degenerate or singular fully nonlinear parabolic equation
\begin{equation*}
   u_t-|Du|^\gamma F(D^2u)=f
 \quad    \text{in } Q_1,
\end{equation*}
where $ \gamma>-1 $, $ F $ is uniformly $ (\lambda, \Lambda)$-elliptic with $0<\lambda\leq\Lambda$, $F(0)=0$, and $f\in C^0(Q_1)\cap L^\infty(Q_1)$.
Given any prescribed $\alpha\in(0,1)$, we prove that there exists
$ \varepsilon=\varepsilon(n,\alpha)>0 $ such that
\[
 |\gamma|+\left(\frac{\Lambda}{\lambda}-1\right) \leq \varepsilon
 \quad   \Longrightarrow     \quad
 u  \in C_{\mathrm{loc}}^{1+\alpha,\frac{1+\alpha}{2}}(Q_1).
\]
In particular, the spatial gradient H\"older exponent can be chosen arbitrarily close to $1$ as $ (\gamma, \frac{\Lambda}{\lambda}) \to (0,1) $. This provides a fully nonlinear counterpart to the asymptotic gradient regularity result established in~\cite{WZJ26}.
\end{abstract}

\allowdisplaybreaks

\maketitle

\section{Introduction}
In this paper, we consider asymptotic gradient regularity for the degenerate or singular fully nonlinear parabolic equation
\begin{equation}\label{Intro:maineq1}
  u_t - |Du|^{\gamma} F(D^2u) =f   \quad  \text{in } Q_{1}:= B_{1} \times (-1, 0],
\end{equation}
where $ -1 < \gamma < \infty  $ and $f\in C^{0}(Q_1)\cap L^\infty(Q_1) $. The operator $ F : \mathrm{Sym}(n) \to \mathbb{R} $ satisfies the following structural condition:

\vspace{1mm}

\noindent
\phantomsection\label{F1} {\bf (F1).} $F$ is uniformly elliptic with $F(0) = 0$; namely, there exist constants $0 < \lambda \leq \Lambda$ such that, for all $\mathrm{M}, \mathrm{N} \in \mathrm{Sym}(n)$,
\[
  \mathscr{P}^{-}_{\lambda,\Lambda}(\mathrm{M}-\mathrm{N}) \leq F(\mathrm{M}) - F(\mathrm{N})
  \leq \mathscr{P}^{+}_{\lambda,\Lambda}(\mathrm{M}-\mathrm{N}),
\]
where $\mathscr{P}^{\pm}$ denote the \textit{Pucci extremal operators} \cite{CC95}, i.e.,
\[
  \mathscr{P}^{+}_{\lambda,\Lambda}(\mathrm{X})
  := \lambda \sum_{e_{i}<0} e_{i}(\mathrm{X}) + \Lambda \sum_{e_{i}>0} e_{i}(\mathrm{X}),
  \quad \text{and}  \quad
  \mathscr{P}^{-}_{\lambda,\Lambda}(\mathrm{X})
  := \lambda \sum_{e_{i}>0} e_{i}(\mathrm{X}) + \Lambda \sum_{e_{i}<0} e_{i}(\mathrm{X}).
\]

When $\gamma=0$, equation~\eqref{Intro:maineq1} is uniformly
parabolic; see the classical works of Wang~\cite{W92a,W92b,W92c}
for the corresponding regularity theory. For $\gamma>0$, the diffusion degenerates at points where the spatial gradient vanishes, whereas for $-1<\gamma<0$ it is singular there. The regularity problem therefore involves both the ellipticity of the Hessian operator and the behavior of the gradient-dependent factor near the critical set. Our aim is not merely to obtain a positive H\"older exponent for $ Du $, but to determine whether this exponent can be prescribed arbitrarily close to one as $ \gamma \to 0$ and $\frac{\Lambda}{\lambda}  \to  1$.

For equation~\eqref{Intro:maineq1}, Lee-Lee-Yun
\cite{LLY24} established interior $C^{1,\alpha}$-regularity for some
positive exponent when $\gamma>-1$, assuming that $F$ is convex or
concave and belongs to $C^{1,\kappa}$ for some $\kappa\in(0,1]$.
Their argument proceeds through classical solutions of regularized
Cauchy--Dirichlet problems. A central ingredient is a
Bernstein-type gradient estimate formulated in terms of difference
quotients, which makes it possible to establish the required
solvability without assuming third-order differentiability of the
solutions. This work provides a regularity theory over the full range
$\gamma>-1$, without a smallness restriction on the ellipticity
aperture.

Lee-Lian-Yun-Zhang~\cite{LLYZ25} subsequently developed
an approach based on local boundedness of the time
derivative. They proved this estimate for the homogeneous parabolic
$p$-Laplace equation with $p>2$ and extended their method to fully nonlinear degenerate
equations. For
\begin{equation*}
    u_t- |Du|^\gamma F(D^2u) = 0,
    \quad \gamma>0,
\end{equation*}
with convex or concave $F$, they obtained the optimal spatial gradient
exponent $\frac{1}{1+\gamma}$, Lipschitz continuity of $u$ in time, and temporal H\"older exponent $\frac{1}{2+\gamma} $ for $Du$.
The time-derivative estimate allows the spatial regularity problem to
be reduced to the corresponding elliptic equation on time slices.
Importantly, their fully nonlinear result also removes the
$C^{1,\kappa}$ assumption on $F$ from \cite{LLY24}.
These conclusions concern the homogeneous degenerate problem under
convexity or concavity and are distinct from the perturbative,
nonconvex setting considered here. In a related direction, da Silva-Jiang-Wang~\cite{dJW25} obtained sharp free boundary growth estimates for nonnegative, time-nondecreasing solutions of fully nonlinear degenerate parabolic equations with strong absorption, assuming convexity or concavity of $ F $.

A complementary line of research concerns improved regularity near
the heat equation. Andrade and Santos~\cite{AS22} studied the
inhomogeneous parabolic normalized $p$-Laplace equation
\begin{equation*}
    u_t-\Delta_p^N u=f   \quad   \text{in}  \quad  Q_{1}
\end{equation*}
where
\begin{equation*}
  \Delta_p^N u
    :=\Delta u+(p-2)
    \left\langle D^2u\frac{Du}{|Du|},\frac{Du}{|Du|}\right\rangle.
\end{equation*}
They proved that, for each prescribed
$\alpha\in(0,1)$, bounded viscosity solutions belong locally to
$C^{1+\alpha,\frac{1+\alpha}{2}}$ whenever $p$ is sufficiently close to $ 2$. Their proof transfers the regularity of caloric functions through
compactness and an iteration of affine approximations. In
\cite[Remark~1.1]{AS22}, they asked whether the same improvement
holds for the weighted equation
$u_t-|Du|^\gamma\Delta_p^N u=f$ under the joint smallness condition
$|p-2|+|\gamma|\ll1$.

Lee and Yun~\cite{LY24} established a related perturbative result at
the level of Pucci solution classes. They proved interior and
boundary $C^{1,\alpha}$-estimates for any prescribed
$\alpha\in(0,1)$ when the ellipticity ratio is sufficiently close to
one, and applied these estimates to normalized $p$-Laplace equations.
An important feature of their formulation is that the solution class
is preserved under subtraction of spatial affine functions.
Consequently, the gradient dependence of the normalized operator can
be handled within a fixed uniformly parabolic class throughout the
iteration. For $\gamma=0$, their interior regularity already gives the
conclusion sought here for~\eqref{Intro:maineq1}.

In our recent work~\cite{WZJ26}, we considered the weighted
normalized equation and proved that
\begin{equation*}
    |p-2|+|\gamma|\leq\varepsilon(n,\alpha)
    \quad   \Longrightarrow   \quad
    u\in C_{\mathrm{loc}}^{1+\alpha,\frac{1+\alpha}{2}}(Q_1).
\end{equation*}
This answers the question in \cite[Remark~1.1]{AS22}
in a joint neighborhood of $(p,\gamma)=(2,0)$,
including both positive and negative $\gamma$. The proof combines heat
approximation with an intrinsic improvement of flatness and a
separation into critical and noncritical regimes. The present paper
provides a fully nonlinear counterpart of that result, with the
smallness of $|p-2|$ replaced by a small ellipticity aperture for the
Hessian operator. The two operator classes are not identical: the
normalized $p$-Laplacian has an additional dependence on the gradient
direction, whereas here $F$ may depend nonlinearly on the Hessian.

The preceding results lead to the following question:

\vspace{2mm}

\textit{`` Does the small-aperture improvement of~\cite{LY24} persist in the presence of $|Du|^\gamma$, without convexity or differentiability assumptions on $F$?  ''}

\vspace{2mm}

A direct application of the fixed Pucci-class estimates is not
available. Even when $\frac{\Lambda}{\lambda}$ is close to $ 1$, the factor $|Du|^\gamma$ may vanish or become unbounded near $Du=0$.
Thus small ellipticity aperture does not, by itself, restore fixed
positive parabolicity constants. Our main theorem gives an affirmative
answer when the ellipticity aperture and $|\gamma|$ are simultaneously small.

\begin{theorem}\label{Main:thm}
  Suppose $ F $ satisfies \hyperref[F1]{\bf (F1)}. Let $u\in C^{0}(Q_1)$ be a bounded viscosity solution of \eqref{Intro:maineq1}, where $ -1<\gamma<\infty $, and $f\in C^{0}(Q_1)\cap L^\infty(Q_1) $. Given $\alpha\in(0,1)$, there exists
$
 \varepsilon =\varepsilon(\alpha, n)>0 $ such that if
\[
|\gamma|+ \left(\frac{\Lambda}{\lambda}-1\right) \leq \varepsilon,
\]
then $ u\in C^{1+\alpha, \frac{1+\alpha}{2}}(Q_{\frac{1}{2}}) $. Moreover,
\[
 ||u||_{C^{1+\alpha,\frac{1+\alpha}{2}}(Q_{\frac{1}{2}})}
 \le \mathrm{C},
\]
where $ \mathrm{C} $ depends only on $ n $, $ \alpha $, $ \lambda $,  $ ||u||_{L^\infty(Q_1)} $, and $ ||f||_{L^\infty(Q_1)} $.
\end{theorem}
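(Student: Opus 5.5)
The plan follows the strategy of \cite{WZJ26}: approximate by caloric functions, improve flatness by subtracting affine functions, and iterate at intrinsic scales. First normalize. Since $F$ is $(\lambda,\Lambda)$-elliptic, $\tilde F:=F/\lambda$ is $(1,\Lambda/\lambda)$-elliptic. By scaling $u_r(x,t)=u(rx,\theta t)/K$ with suitable $r,\theta,K$ (the scaling must respect the intrinsic factor $K^{\gamma}$ in front of the diffusion), we may assume $\|u\|_{L^\infty(Q_1)}\le 1$ and $\|f\|_{L^\infty(Q_1)}\le\delta$, with $\delta$ small to be chosen. The rescaled equations take the form
\[
v_t-|Dv+q|^\gamma \tilde F(D^2v)=g,
\]
where $q\in\mathbb{R}^n$ is the accumulated slope of the affine corrections. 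Since $|\gamma|$ is small, the weight $|Dv+q|^\gamma$ is close to $1$ wherever $|Dv+q|$ lies in a fixed compact range away from $0$ and $\infty$. The degenerate set is handled separately.

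The first step is an approximation lemma. For every $\eta>0$ there is $\varepsilon_0>0$ such that if $|\gamma|+(\Lambda/\lambda-1)+\|g\|_\infty\le\varepsilon_0$, $\|v\|_{L^\infty(Q_1)}\le1$, and $v$ solves the equation above with arbitrary $q$, then some caloric $h$ satisfies $\|v-h\|_{L^\infty(Q_{1/2})}\le\eta$. I would argue by contradiction and compactness. This needs equicontinuity of $v$ uniformly in $q$ and $\gamma$. For $|q|$ large the equation is uniformly parabolic with bounded constants, so Krylov--Safonov/Wang estimates apply. For $|q|$ bounded I would use the Imbert--Silvestre-type argument adapted to parabolic equations: at points where $|Dv+q|$ is large the equation is uniformly parabolic, while near $Dv=-q$ one tests with paraboloids of controlled slope. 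Along a subsequence $\gamma_k\to0$, $\Lambda_k/\lambda_k\to1$, and by uniform ellipticity $\tilde F_k(M)\to\mathrm{tr}M$ locally uniformly. Stability of viscosity solutions then gives a limit solving $h_t-\Delta h=0$. Here one must check that the factor $|Dv+q|^{\gamma_k}\to1$ passes to the limit in the viscosity sense, including at test functions with $D\varphi+q=0$. At such points I would use the standard argument that it suffices to test with $D\varphi\neq -q$, or with $D^2\varphi=0$, as in Juutinen--Lindqvist--Manfredi type characterizations.

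The second step is improvement of flatness. The caloric $h$ has $C^{2,1}$ estimates, so $|h-\ell(x)|\le C\rho^2$ on $Q_\rho$ with $\ell$ its spatial affine part at $0$. Given $\alpha<1$, choose $\rho$ with $C\rho^{2}\le\rho^{1+\alpha}/2$, then $\eta=\rho^{1+\alpha}/2$, which fixes $\varepsilon_0$. This gives $\sup_{Q_\rho}|v-\ell|\le\rho^{1+\alpha}$ with $|D\ell|\le C(n)$. Rescaling $w(x,t)=(v-\ell)(\rho x,\rho^2 t)/\rho^{1+\alpha}$ produces an equation of the same form with slope $q'=(q+D\ell)\rho^{-\alpha}$ and a multiplicative factor $\rho^{-\alpha\gamma}$ in front of the diffusion. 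One can absorb this factor either into a time rescaling or into $\tilde F$, which keeps the ellipticity ratio unchanged. The source becomes $\rho^{1-\alpha}g$, which stays small. Iterating gives $\sup_{Q_{\rho^k}}|u-\ell_k|\le\rho^{k(1+\alpha)}$ with $|D\ell_{k+1}-D\ell_k|\le C\rho^{k\alpha}$. Translating the base point then yields the $C^{1+\alpha,\frac{1+\alpha}{2}}$ estimate by the standard Campanato-type characterization.

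The main obstacle is keeping the accumulated factors $\rho^{-k\alpha\gamma}$ under control, since they may degenerate or blow up over $k$. Following \cite{WZJ26}, I would handle this with intrinsic scaling and a split into two regimes. In the critical regime, $|D\ell_k|\lesssim\rho^{k\alpha}$ holds for all $k$, and we iterate with the intrinsic cylinders $Q_{\rho}^{(\mu)}=B_\rho\times(-\mu^{-\gamma}\rho^2,0]$, so each step is normalized and the lemma applies uniformly. In the noncritical regime, there is a first $k$ with $|D\ell_k|\gg\rho^{k\alpha}$. There $|Du|$ stays comparable to $|D\ell_k|$ on the cylinder, so the equation is uniformly parabolic with ratio close to $\Lambda/\lambda$. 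Then the small-aperture estimate of Lee--Yun \cite{LY24} applies to the Pucci class and gives $C^{1+\alpha}$ directly, matching the scales from the first regime. The heart of the proof is checking that the time-scale distortion $\mu^{-\gamma}$ is compatible with the parabolic $C^{1+\alpha,\frac{1+\alpha}{2}}$ norm. This needs $|\gamma|$ small relative to $1-\alpha$.
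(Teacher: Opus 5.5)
\textbf{There is a genuine gap in your noncritical regime.} Your critical regime matches the paper: bounded shifts, intrinsic cylinders $B_r\times(-r^2\mu^{-\gamma},0]$, and stopping at the first level where the slope leaves $O(\mu_k)$.

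At the first stopping level you assert that ``$|Du|$ stays comparable to $|D\ell_k|$ on the cylinder,'' so that the equation is uniformly parabolic and \cite{LY24} applies. Nothing in the iteration gives this. In normalized variables you only know $\|w_k\|_{L^\infty(Q_1)}\le\eta$ and $K_*<|q_k|\le K^*$. An $L^\infty$ bound gives no pointwise control of $Dw_k$. So a priori there can be test contacts with $q_k+D\phi=0$, where $|q_k+D\phi|^\gamma$ vanishes or blows up. Until such cancellation is excluded, $w_k$ lies in no fixed Pucci class, and neither \cite{LY24} nor Krylov--Safonov/Wang theory can be invoked.

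The missing ingredient is the paper's noncritical Lipschitz estimate, Lemma~\ref{lem:FL-noncritical}. An Ishii--Lions doubling argument with the concave modulus $\varphi(s)=s-\kappa_0 s^{\nu}$ ($\nu>1$ close to $1$) shows that $w_k$ is $1$-Lipschitz in space. Hence $1\le|q_k+p|\le K^*+1$ for every test gradient $p$. The argument closes for three reasons:
\begin{itemize}
\item At the doubling maximizer both total gradients lie in a fixed annulus. This uses $|q_k|\ge2$ and also the upper bound $|q_k|\le K^*$, which holds only because one stops at the \emph{first} failure. Your phrase ``$|D\ell_k|\gg\rho^{k\alpha}$'' hides this.
\item The localization gradients are small, because the oscillation and the H\"older modulus from equicontinuity are small.
\item The weight mismatch $\bigl(|\xi_1|^\gamma-|\xi_2|^\gamma\bigr)F(\mathrm Y)$ is of size $|\gamma|$ times that small factor times $\|\mathrm Y\|$. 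It is therefore absorbed by the negative radial term $\langle(\mathrm X-\mathrm Y)e,e\rangle\le 2\varphi''$.
\end{itemize}
Only after this can $|q_k+p|^\gamma$ be clamped outside the annulus to give a globally uniformly parabolic perturbation of the heat equation.

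\textbf{Your approximation lemma has the same unjustified step.} You state it for arbitrary $q$, arguing that for $|q|$ large ``the equation is uniformly parabolic.'' Large $|q|$ does not keep $Dv+q$ away from zero, and the statement is in fact false. Take $F=\tr$, $\gamma_j\downarrow0$, $|q_j|=2^{1/\gamma_j}$ and $v_j=\epsilon_0(x_1^2+4t)$. Then the sources $g_j:=(v_j)_t-|Dv_j+q_j|^{\gamma_j}\Delta v_j$ tend to $0$ uniformly, since $|Dv_j+q_j|^{\gamma_j}\to2$. Yet the limit solves $h_t=2\Delta h$ and stays a fixed $L^\infty$ distance from caloric functions.

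You only need $|q|\le M$, since the critical condition gives $|q_k|\le3$, and that is what the paper proves. There the zero-total-gradient contacts are handled by two tools. The first is the quartic-contact Lemma~\ref{Section2:lemma1}, needed because for $\gamma<0$ the definition at critical points is the special one. The second is the recognition Lemma~\ref{lem:recognition}, which is the rigorous form of your Juutinen--Lindqvist--Manfredi remark.

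\textbf{A minor point on exponents.} Iterating with the target exponent $\alpha$ itself loses in the time exponents when $\gamma<0$: you would only get $|t-s|^{(1+\alpha)/(2+\alpha|\gamma|)}$. The paper iterates with $\beta=(1+\alpha)/2$ and compares expansions on cylinders of height $r^{\vartheta_\gamma}$. This is bookkeeping your scheme can absorb; the Lipschitz estimate is the genuinely missing idea.
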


\begin{remark}
Compared with~\cite{LLY24,LLYZ25}, Theorem \ref{Main:thm} dispenses with convexity or concavity within a perturbative regime and treats
bounded continuous forcing together with both signs of $\gamma$.
It does not improve the optimal fixed-$\gamma$ estimates
of~\cite{LLYZ25} in their homogeneous convex or concave setting.
\end{remark}

Taking $\gamma=0$ in Theorem~\ref{Main:thm}, we obtain the
following special case of the interior estimate in
\cite[Theorem~1.1]{LY24} for translation-invariant fully
nonlinear equations.    

\begin{corollary}[Uniformly parabolic case]
\label{cor:uniformly-parabolic}   
Let $\alpha\in(0,1)$, and suppose that $F$ satisfies
\hyperref[F1]{\bf (F1)}. Let $u\in C^0(Q_1)$ be a
bounded viscosity solution of
\begin{equation*}
    u_t-F(D^2u)=f
    \quad\text{in }Q_1,
\end{equation*}
where $f\in C^0(Q_1)\cap L^\infty(Q_1)$.
There exists $\varepsilon=\varepsilon(n,\alpha)>0$ such
that, if
\begin{equation*}
    \frac{\Lambda}{\lambda}-1\leq\varepsilon,
\end{equation*}
then $u\in C^{1+\alpha,\frac{1+\alpha}{2}}
(Q_{\frac{1}{2}})$ and
\begin{equation*}
    \|u\|_{C^{1+\alpha,\frac{1+\alpha}{2}}
    (Q_{\frac{1}{2}})}
    \leq \mathrm{C} \left(
        \|u\|_{L^\infty(Q_1)}
        +\|f\|_{L^\infty(Q_1)}
    \right),
\end{equation*}
where $\mathrm{C} $ depends only on $n$, $\alpha$, and $\lambda$.
\end{corollary}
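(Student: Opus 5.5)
The plan is to obtain Corollary~\ref{cor:uniformly-parabolic} as the case $\gamma=0$ of Theorem~\ref{Main:thm}, and then to sharpen the constant into the linear form
\[
\mathrm{C}\bigl(\|u\|_{L^\infty(Q_1)}+\|f\|_{L^\infty(Q_1)}\bigr)
\]
by a scaling argument that uses the structure available only when $\gamma=0$.

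With $\gamma=0$ the smallness hypothesis of Theorem~\ref{Main:thm} reduces to $\frac{\Lambda}{\lambda}-1\le\varepsilon$, so we take the same $\varepsilon(n,\alpha)$. The theorem then gives $u\in C^{1+\alpha,\frac{1+\alpha}{2}}(Q_{1/2})$ together with a bound $\mathrm{C}_0$. This bound depends on $n,\alpha,\lambda$ and on the two norms, but in an unspecified way.

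To make the dependence linear, we normalize. Set
\[
K:=\|u\|_{L^\infty(Q_1)}+\|f\|_{L^\infty(Q_1)}.
\]
If $K=0$, then $u\equiv0$ and there is nothing to prove. Otherwise put $v:=u/K$ and $G(\mathrm{M}):=K^{-1}F(K\mathrm{M})$. Since \hyperref[F1]{\bf (F1)} holds for $F$ with constants $\lambda,\Lambda$, it holds for $G$ with the same constants, because the Pucci operators are positively $1$-homogeneous. Moreover $G(0)=0$. Viscosity solutions transform correctly under this scaling, since test functions scale linearly: $v$ is a viscosity solution of
\[
v_t-G(D^2v)=f/K \quad\text{in } Q_1,
\]
with $\|v\|_{L^\infty(Q_1)}\le1$ and $\|f/K\|_{L^\infty(Q_1)}\le1$.

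We now apply Theorem~\ref{Main:thm} (again with $\gamma=0$) to $v$ and $G$. The norms of the data are bounded by $1$, so the resulting constant $\mathrm{C}$ depends only on $n,\alpha,\lambda$. For this step we need the constant in Theorem~\ref{Main:thm} to be monotone in the data norms, or at least uniform over data with norms at most $1$. This is standard for compactness and iteration proofs, which only use upper bounds on the norms. Multiplying back by $K$ gives the stated linear estimate.

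The main point to check is precisely this uniformity of the theorem's constant over normalized data. The reason the $\gamma=0$ case admits the linear form is that the scaling $u\mapsto u/K$ preserves the operator class only when $\gamma=0$: for $\gamma\neq0$ the factor $|Du|^\gamma$ introduces a $K^{\gamma}$ in front of the diffusion term, so no such linear bound is claimed in Theorem~\ref{Main:thm}. As a consistency check, the corollary is also a special case of \cite[Theorem~1.1]{LY24}.
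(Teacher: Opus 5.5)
Your proposal is correct and follows the paper's proof essentially verbatim. You normalize by $\|u\|_{L^\infty(Q_1)}+\|f\|_{L^\infty(Q_1)}$, note that $\mathrm{M}\mapsto K^{-1}F(K\mathrm{M})$ vanishes at zero and keeps the constants $(\lambda,\Lambda)$, apply Theorem~\ref{Main:thm} with $\gamma=0$, and rescale. The uniformity of the constant over normalized data that you flag is used tacitly in the paper too. It does hold: in the proof of Theorem~\ref{Main:thm} the data enter only through $A=\max\{1,\frac{2U}{\eta},\frac{2H}{\varepsilon_0}\}$ and the quantities $R$, $\overline K$ built from it, and these are bounded in terms of $n,\alpha,\lambda$ once $U,H\le 1$.
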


\begin{proof}
Set
\begin{equation*}
    A:=\|u\|_{L^\infty(Q_1)}
       +\|f\|_{L^\infty(Q_1)}.
\end{equation*}
The case $A=0$ is immediate. For $A>0$, define
\begin{equation*}
    v:=\frac{u}{A},
    \qquad
    F_A(\mathrm{M}):=\frac{F(A\mathrm{M})}{A},
    \qquad
    g:=\frac{f}{A}.
\end{equation*}
Then $F_A(0)=0$, $F_A$ has the same ellipticity constants
as $F$, and $v$ solves
\begin{equation*}
    v_t-F_A(D^2v)=g
    \quad\text{in }Q_1,
\end{equation*}
with $\|v\|_{L^\infty(Q_1)}\leq1$ and
$\|g\|_{L^\infty(Q_1)}\leq1$.
Applying Theorem~\ref{Main:thm} with $\gamma=0$ yields
\begin{equation*}
    \|v\|_{C^{1+\alpha,\frac{1+\alpha}{2}}
    (Q_{\frac{1}{2}})}
    \leq \mathrm{C}(n,\alpha,\lambda).
\end{equation*}
Multiplying by $A$ proves the estimate.    
\end{proof}

\subsection{Outline of the proof}

We follow the critical/noncritical strategy of \cite{WZJ26}.
The main additional difficulty is to establish the required estimates
for a general uniformly elliptic operator $ F $, without relying on the explicit coefficient structure of the normalized $ p $-Laplacian.

\medskip
\noindent\emph{Step 1. Uniform compactness and heat approximation.}
For the shifted equation
\begin{equation*}
  u_t-|q+Du|^\gamma F(D^2u)=f,
\end{equation*}
temporal regularization, spatial doubling of variables, and quartic
barriers yield uniform equicontinuity for bounded shifts $q$.
The delicate point is that $|\xi|^\gamma\to1$ as $\gamma\to0$
is not uniform near $\xi=0$, where $\xi$ denotes the total test gradient.
Thus standard viscosity stability does not immediately identify the
limiting equation at zero-total-gradient contacts. We address these
contacts through quartic-contact inequalities and a separate
limit-identification argument, obtaining heat approximation when
$|\gamma|+\left(\frac{\Lambda}{\lambda}-1 \right)$ and the source $ f$ are sufficiently small.

\medskip
\noindent\emph{Step 2. Intrinsic flatness improvement and stopping.}
Choose $\beta= \frac{1+\alpha}{2} >\alpha$. Heat approximation improves affine
flatness on the intrinsic cylinders
$B_r\times(-r^{2-\beta\gamma},0]$.
Although $F$ need not be homogeneous, its rescalings preserve uniform
ellipticity and closeness to the trace operator. The rescaled source
also remains small because $1-\beta(1+\gamma)>0$.
We iterate while the affine slopes satisfy
$|l_k|\leq3r_k^\beta$.
Either this condition holds at every scale, yielding an expansion at
a critical point, or the first stopping scale produces a normalized
slope in a fixed annulus.

\medskip
\noindent\emph{Step 3. Noncritical Lipschitz estimate and reduction.}
A large affine slope alone does not exclude cancellation by $ Du $.
We therefore first prove a spatial $ 1 $-Lipschitz estimate for the
normalized deviation. In place of the explicit matrix-coefficient
calculations in \cite{WZJ26}, we control $F(\mathrm{X})-F(\mathrm{Y})$ by Pucci inequalities and absorb the gradient-weight mismatch using a concave penalization and the small oscillation. Consequently, every total test gradient stays in a fixed annulus. Clamping the gradient factor outside this annulus gives a globally uniformly parabolic perturbation of the heat equation.
This is where the fixed Pucci-class approach of \cite{LY24} becomes
applicable; it cannot be used directly before the degeneracy or
singularity has been excluded.

\medskip
\noindent\emph{Step 4. Recovery of space--time regularity.}
We combine the critical and noncritical estimates on common cylinders
with time length $r^{\vartheta_\gamma}$, where
$\vartheta_\gamma=\max\{2,2-\beta\gamma\}$.
Comparing affine expansions at nearby centers yields the spatial and
temporal estimates. The choice $\beta>\alpha$ compensates for the
loss in temporal exponents when $\gamma<0$: for sufficiently small
$|\gamma|$,
$ \frac{\beta}{\vartheta_\gamma}  \geq  \frac{\alpha}{2}$ and
$ \frac{1+\beta}{\vartheta_\gamma} \geq \frac{1+\alpha}{2} $.
Undoing the normalization then proves Theorem~\ref{Main:thm}.

\subsection{Structure of the paper}
The paper is organized as follows. Section~\ref{Section 2} introduces the viscosity framework and records the normalization and intrinsic scaling properties. Section~\ref{Section 3} establishes uniform compactness and heat approximation for the shifted equation. These results are used in Section~\ref{Section 4} to prove the one-step intrinsic improvement of flatness. Section~\ref{Section 5} develops the critical iteration and analyzes the first stopping scale, where a noncritical Lipschitz estimate allows us to reduce the equation to a uniformly parabolic perturbation of the heat equation. Finally, Section~\ref{Section 6} combines the critical and noncritical estimates into a common affine expansion and completes the proof of Theorem~\ref{Main:thm} by undoing the normalization and comparing affine expansions at nearby points to obtain the required spatial and temporal regularity.

\subsection{Notations}
We summarize below the basic notation used throughout the paper.
\begin{itemize}

\item For $z_0=(x_0,t_0)\in\R^{n+1}$ and $r>0$, set
\[
 Q_r(z_0):=B_r(x_0)\times(t_0-r^2,t_0],
 \qquad Q_r:=Q_r(0,0).
\]

\item For $ \mu >0$ we also use the intrinsic cylinder
\begin{equation*}
 Q_r^\mu(z_0)
 :=B_r(x_0)\times(t_0-r^2\mu^{-\gamma},t_0],
 \qquad Q_r^\mu:=Q_r^\mu(0,0).
\end{equation*}

\item $\mathrm{Sym}(n)$ denotes the space of all real symmetric $n\times n$ matrices.

\item For $0<\sigma<1$ and a parabolic cylinder $Q$, define
\[
 [v]_{C^{\sigma,\frac{\sigma}{2}}(Q)}
 :=\sup_{(x,t)\ne(y,s)\in Q}
 \frac{|v(x,t)-v(y,s)|}{|x-y|^\sigma+|t-s|^{\frac{\sigma}{2}}}.
\]
We use the standard norm
\begin{align*}
 \norm{u}_{C^{1+\alpha,\frac{1+\alpha}{2}}(Q)}
 :=&\ \norm{u}_{L^\infty(Q)}+\norm{Du}_{L^\infty(Q)}
 +[Du]_{C^{\alpha,\frac{\alpha}{2}}(Q)}\\
 &+\sup_{(x,t), (x,s) \in Q, t\ne s}
 \frac{|u(x,t)-u(x,s)|}{|t-s|^{\frac{1+\alpha}{2}}},
\end{align*}
with the suprema restricted to points for which the relevant pairs belong to $ Q $.

\item $\Id_n$ denotes the $ n \times n $ identity matrix.

\item For a matrix $\mathrm{M} $, $\|\mathrm{M}\|$ denotes its operator norm.

\item For nonnegative quantities $A$ and $B$, we write $A\asymp B$
if $\mathrm{C}^{-1}B \leq A \leq \mathrm{C}B$ for some constant $\mathrm{C}\geq1$ depending only on the parameters specified in the relevant estimate.
\end{itemize}

{\bf Acknowledgments}. The author is deeply grateful to Dr Yini Zhang (University of Bristol, UK) for helpful comments on an earlier version of this paper. Generative AI tools were used to assist with language editing and improving the presentation of the manuscript. The author takes full responsibility for the content of the manuscript.

\vspace{2mm}

\section{Preliminaries}\label{Section 2}
In this section, we recall the viscosity framework for
\eqref{Intro:maineq1} and record the normalization and intrinsic
scaling properties used throughout the proof.

\subsection{Viscosity solutions}
For $\gamma>0$, the operator in \eqref{Intro:maineq1} is continuous
also at zero gradient, and we use the standard viscosity definition;
see \cite[Definition~3.4]{IS13} or \cite[Section~8]{CIL92}.
When $\gamma=0$, the equation is understood as $u_t-F(D^2u)=f$
and is treated in the same standard sense. For $-1<\gamma<0$,
we use the formulation below; cf.\ \cite[Definition~2.2]{LLY24}.
All viscosity inequalities are imposed at time-interior points.
For solutions continuous up to $t=0$, estimates at the top time are
obtained by passage to the limit from $t<0$.

\begin{definition}\label{Section2:def1}
Let $-1<\gamma<0$, and let $u$ be a locally bounded upper
semicontinuous function in $Q_1$. We say that $u$ is a viscosity
subsolution of \eqref{Intro:maineq1} if both of the following
requirements hold at every $(x_0,t_0)\in B_1\times(-1,0)$:
\begin{enumerate}[label=(\roman*)]
\item For every $C^{2,1}$ test function $\varphi$ touching $u$
from above at $(x_0,t_0)$ with $D\varphi(x_0,t_0)\ne0$, we have
\begin{equation*}
 \varphi_t(x_0,t_0)
 -|D\varphi(x_0,t_0)|^\gamma F(D^2\varphi(x_0,t_0))
 \le f(x_0,t_0).
\end{equation*}
\item For every $\delta_1,\delta_2>0$ and
$\varphi\in C^1(I)$, where $I=(t_0-\delta_1,t_0+\delta_1)$ and
$B_{\delta_2}(x_0)\times I\subset B_1\times(-1,0)$, if
\begin{equation*}
 u(x_0,t_0)-\varphi(t_0)\ge u(x_0,t)-\varphi(t)
 \quad\text{for all }t\in I,
\end{equation*}
and
\begin{equation*}
 \sup_{t\in I}\bigl[u(x,t)-\varphi(t)\bigr]
 =u(x_0,t_0)-\varphi(t_0)
 \quad\text{for all }x\in B_{\delta_2}(x_0),
\end{equation*}
then $\varphi'(t_0)\le f(x_0,t_0)$.
\end{enumerate}
A locally bounded lower semicontinuous function $u$ is a viscosity
supersolution if $-u$ is a viscosity subsolution with operator
$F^\sharp(\mathrm{M}):=-F(-\mathrm{M})$ and right-hand side $-f$.
Equivalently, the supersolution conditions use lower tests, reverse
the inequalities, and replace the temporal supremum by an infimum.
A continuous function is a viscosity solution if it is both a
viscosity subsolution and a viscosity supersolution.
\end{definition}

Throughout, right-hand sides are bounded and continuous on the
cylinders under consideration. For a fixed $q\in\mathbb R^n$,
the viscosity interpretation of
$w_t-|q+Dw|^\gamma F(D^2w)=f$ is inherited from
$v(x,t)=w(x,t)+q\cdot x$. In particular, a test $\varphi$ for $w$
is critical precisely when $q+D\varphi=0$.

We will use the following consequence at the centers of quartic time
barriers and in the heat-approximation argument.
\begin{lemma}\label{Section2:lemma1}
Assume that $ F $ satisfies \hyperref[F1]{\bf (F1)}, $f\in C^{0}(Q_1)\cap L^\infty(Q_1) $ and $ \gamma>-\frac{2}{3} $. Let $u$ be a viscosity subsolution of \eqref{Intro:maineq1} with the conventions above. If
\begin{equation*}
  \psi(t) + A|x-x_0|^4,\quad A>0,  \quad   \psi \in C^1,
\end{equation*}
touches $u$ from above at a time-interior point $(x_0,t_0)$, then
$\psi'(t_0)\le f(x_0,t_0)$. For a supersolution, a lower contact by
$\psi(t)-A|x-x_0|^4$ gives $ \psi'(t_0)\ge f(x_0,t_0) $.
\end{lemma}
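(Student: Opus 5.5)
The plan is to reduce everything to the zero-gradient structure of the quartic test. We separate $\gamma\ge0$ from $-\frac23<\gamma<0$, and in the singular case we use a perturbation argument in the spirit of Ohnuma--Sato. This argument either produces noncritical contacts, where part~(i) of Definition~\ref{Section2:def1} applies with a vanishing error, or forces the flatness hypothesis of part~(ii).

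\emph{Case $\gamma\ge0$.} Here $\varphi(x,t)=\psi(t)+A|x-x_0|^4$ is an admissible test in the standard sense; by the usual density argument, $C^1$-in-time tests are allowed. At the contact point we have $D\varphi(x_0,t_0)=0$ and $D^2\varphi(x_0,t_0)=0$. Since $F(0)=0$, and since $|0|^\gamma=0$ when $\gamma>0$ (while $F(0)=0$ handles $\gamma=0$), the viscosity inequality reads $\psi'(t_0)\le f(x_0,t_0)$.

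\emph{Case $-\frac23<\gamma<0$, setup.} First make the contact strict in time by replacing $\psi$ with $\tilde\psi(t)=\psi(t)+(t-t_0)^2$; this leaves $\tilde\psi'(t_0)=\psi'(t_0)$ unchanged, and strictness in $x$ is automatic from the quartic. Fix a small closed cylinder $K=\overline{B_\rho(x_0)}\times[t_0-\rho,t_0+\rho]$. For $y$ with $|y-x_0|\ll\rho$, let $(x_y,t_y)$ be a maximum point of $u(x,t)-\tilde\psi(t)-A|x-y|^4$ over $K$, and let $m(y)$ denote the maximal value. Strictness gives $(x_y,t_y)\to(x_0,t_0)$ as $y\to x_0$, so these points are interior for $y$ close to $x_0$.

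\emph{Subcase (a): $x_y\ne y$ for a sequence $y\to x_0$.} The test $\tilde\psi(t)+A|x-y|^4$ then has gradient $4A|x_y-y|^2(x_y-y)\neq0$, so part~(i) of Definition~\ref{Section2:def1} applies. Its Hessian has norm at most $12A|x_y-y|^2$, so by \hyperref[F1]{\bf (F1)},
\[
\bigl||D\varphi|^\gamma F(D^2\varphi)\bigr|\le C(n,\Lambda)A^{1+\gamma}|x_y-y|^{3\gamma+2}.
\]
This tends to $0$ precisely because $\gamma>-\frac23$. Hence $\tilde\psi'(t_y)\le f(x_y,t_y)+o(1)$, and continuity of $\tilde\psi'$ and $f$ gives the claim.

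\emph{Subcase (b): $x_y=y$ for all $y$ near $x_0$.} Then $m(y)=\sup_t[u(y,t)-\tilde\psi(t)]$. For $y,z$ near $x_0$, testing the maximization for $m(y)$ at the point $(z,t_z)$ gives $m(y)\ge m(z)-A|z-y|^4$. By symmetry,
\[
|m(y)-m(z)|\le A|y-z|^4.
\]
Thus $m$ is differentiable with zero derivative, hence constant near $x_0$. Therefore $\sup_t[u(x,t)-\tilde\psi(t)]=u(x_0,t_0)-\tilde\psi(t_0)$ for all $x$ in a small ball, and $t_0$ is a temporal maximizer at $x_0$. Part~(ii) then yields $\tilde\psi'(t_0)\le f(x_0,t_0)$.

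\emph{Supersolutions.} The supersolution statement follows by applying the above to $-u$ with $F^\sharp$ and $-f$; note that $F^\sharp$ satisfies \hyperref[F1]{\bf (F1)} with the same constants.

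The main obstacle is subcase~(b). One has to make sure that part~(ii) of the definition, with its requirement that the temporal supremum be constant in $x$, is really triggered. The quartic two-sided comparison of $m$ showing that $m$ is constant is the key point, together with handling interior maxima and localization carefully so that $m$ is well defined near $x_0$.
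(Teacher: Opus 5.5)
Your strategy is the paper's own: a perturbed quartic test, the bound $|D\varphi|^\gamma|F(D^2\varphi)|\le C|x_y-y|^{2+3\gamma}$ at noncritical contacts, and constancy of $m$ via the quartic two-sided estimate, which triggers part~(ii) of Definition~\ref{Section2:def1}. Arguing by the dichotomy (a)/(b), rather than by contradiction from $\psi'(t_0)-f(x_0,t_0)>0$ as the paper does, is only a repackaging.

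There is one step that fails as written: strictness in $x$ is \emph{not} automatic from the quartic. Keeping the same coefficient $A$, you only know
\[
 u(x,t)-\tilde\psi(t)-A|x-x_0|^4\le-(t-t_0)^2 ,
\]
and equality may hold on a large subset of $\{t=t_0\}$. For instance, take $u=\psi(t)+A|x-x_0|^4$, which is a solution for a suitable continuous $f$. Then
\[
 u-\tilde\psi-A|x-y|^4=-(t-t_0)^2+A\bigl(|x-x_0|^4-|x-y|^4\bigr).
\]
For $y\neq x_0$ this has no interior critical point in $x$, since $z\mapsto|z|^2z$ is injective. Hence every maximizer over $K$ lies on $\partial B_\rho(x_0)\times\{t_0\}$, and neither part~(i) nor part~(ii) can be invoked. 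More generally, interior maximizers may converge to some $(x^*,t_0)$ with $x^*\neq x_0$. There the error $C|x_y-y|^{2+3\gamma}$ does not vanish, so subcase (a) yields nothing about $f(x_0,t_0)$.

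The repair is what the paper does. Enlarge the coefficient to $B=A+\theta$ and set $\tilde\psi=\psi+\theta(t-t_0)^2$, so that
\[
 u-\tilde\psi-B|x-x_0|^4\le-\theta\bigl((t-t_0)^2+|x-x_0|^4\bigr),
\]
which is a genuinely strict maximum. Now every maximizer of $u-\tilde\psi-B|x-y|^4$ tends to $(x_0,t_0)$, and both subcases go through unchanged with $B$ in place of $A$.

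You should also state explicitly, as the paper does, that in subcase (b) any maximizing time for $m(x)$ gives a maximizer of the perturbed functional. That maximizer therefore lies in the interior of the time interval, so the supremum over the open interval $I$ required in part~(ii) coincides with $m(x)$.
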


\begin{proof}
For $\gamma\ge0$, both the spatial gradient and the Hessian of the
quartic test vanish at $(x_0,t_0)$. The desired inequality therefore
follows directly from the standard viscosity definition and $F(0)=0$.
It remains to treat $-\frac{2}{3}<\gamma<0$. We prove the subsolution assertion
by contradiction. Translate the contact point to $(0,0)$ and normalize
the contact value so that
\[
 u(0,0)=\psi(0),  \quad
 u(x,t)\le\psi(t)+A|x|^4
\]
in a neighborhood of $(0,0)$. Suppose that
\[
 d_0:=\psi'(0)-f(0,0)>0.
\]

\smallskip

\noindent\emph{Step 1.}
Fix $\theta>0$ and set
\[
 \psi_\theta(t)=\psi(t)+\theta t^2,  \quad  B=A+\theta.
\]
Choose $r,s>0$ so that the inequality holds on
$K:=\overline B_r\times[-s,s]$, compactly contained in the
translated domain. Then
\[
 \Phi_0(x,t):=u(x,t)-\psi_\theta(t)-B|x|^4
 \le-\theta(t^2+|x|^4).
\]
Thus $\Phi_0$ has a unique maximum, of value zero, at $(0,0)$.
For each fixed $ y $ near $ 0 $, we let
\[
 \Phi_y(x,t):=u(x,t)-\psi_\theta(t)-B|x-y|^4,
\]
which, being upper semicontinuous, attains its maximum on $K$.

Now we claim that all its maximizing points tend to $(0,0)$ as $ y\to 0 $.

Indeed, take $y_j\to0$ and any corresponding maximizing points
$z_j=(x_j,t_j)$. Along a subsequence, $z_j\to z_*\in K$. By the upper semi-continuity, we have
\[
 0\le\liminf_{j \rightarrow \infty}\Phi_{y_j}(z_j)
 \le\limsup_{j \rightarrow \infty}\Phi_{y_j}(z_j)
 \le\Phi_0(z_*)\le0,
\]
where the first inequality follows from
$\Phi_{y_j}(z_j)\ge\Phi_{y_j}(0,0)=-B|y_j|^4$.
Hence $\Phi_0(z_*)=0$, that is to say, $ z_*=(0,0) $. This proves the claim.
In particular, for small $ |y| $, every maximizing point lies in
$B_{\frac{r}{2}}\times(-\frac{s}{2},\frac{s}{2})$.

\smallskip

\noindent\emph{Step 2.}
Let $(x_y,t_y)$ be any maximizing point of $\Phi_y$.
After adding a constant, the function
\[
 \phi_y(x,t)=\psi_\theta(t)+B|x-y|^4
\]
touches $u$ from above there. If $x_y\ne y$, put
$d:=|x_y-y|>0$. At this contact point, there holds
\[
 |D\phi_y|=4Bd^3,    \quad
 \|D^2\phi_y\|=12Bd^2.
\]
Condition (i) of Definition~\ref{Section2:def1}, together with
ellipticity and $F(0)=0$, therefore yields
\begin{equation}\label{Section2:eq1}
 \psi_\theta'(t_y)-f(x_y,t_y)
 \le |D\phi_y|^\gamma F(D^2\phi_y)
 \le C(n,\Lambda,\gamma,B)d^{2+3\gamma}.
\end{equation}
By {\it Step 1} and continuity, the left-hand side of \eqref{Section2:eq1} is at least $\frac{d_0}{2}$
for all sufficiently small $|y|$, uniformly over the maximizing
points. But $d\to0$ and $2+3\gamma>0$, so the right-hand side of \eqref{Section2:eq1} is less than $\frac{d_0}{2}$, which is impossible. Hence, there exists $\rho\in(0,\frac{r}{2})$ such that every maximizing point of $ \Phi_y $ has $ x_y=y $ whenever $ y\in B_\rho $.

\smallskip

\noindent\emph{Step 3.}
For $y\in B_\rho$, define
\[
 m(y):=\max_{t\in[-s,s]}\bigl(u(y,t)-\psi_\theta(t)\bigr).
\]
By {\it Step 2}, $\max_K\Phi_y(x,t)=m(y)$. Hence, for every
$x,y\in B_\rho$ and $t\in[-s,s]$,
\[
 u(x,t)-\psi_\theta(t)-B|x-y|^4\le m(y).
\]
Next maximizing in $t$ and then interchanging $x,y$ gives
\[
 |m(x)-m(y)|\le B|x-y|^4,
\]
which implies that $m$ is constant. In fact, divide the segment from $x$
to $y$ into $N$ equal pieces and sum the preceding inequality:
\[
 |m(x)-m(y)|\le N B\left(\frac{|x-y|}{N}\right)^4
 =\frac{B|x-y|^4}{N^3}\longrightarrow0.
\]
Since $m(0)=u(0,0)-\psi_\theta(0)=0$, we obtain
\[
 \sup_{t\in(-s,s)}\bigl(u(x,t)-\psi_\theta(t)\bigr)=0,
 \quad   x \in B_\rho.
\]
Here the open interval gives the same value: a maximizing time for
$m(x)$ is also part of a maximizing point of $\Phi_x$, and therefore
lies in $(-\frac{s}{2},\frac{s}{2})$ by {\it Step 1}. At $x=0$, the supremum is attained
at $t=0$. Condition (ii) now yields
\[
 \psi'(0)=\psi_\theta'(0)\le f(0,0),
\]
contradicting $d_0>0$. This proves the subsolution assertion.

For a supersolution, apply the result to $-u$, with $ -f $ and
operator $ F^\sharp(M):=-F(-M) $. The operator $ F^\sharp$ has the same ellipticity constants $ (\lambda, \Lambda) $, and a lower contact by $ \psi(t)-A|x-x_0|^4 $ becomes an upper contact for $ -u $.
\end{proof}

\subsection{Normalization and intrinsic scaling}

\subsubsection{Normalization}
Set
\begin{equation}\label{eq:FL-lambda-one}
 \delta:=\frac{\Lambda}{\lambda}-1,
 \qquad
 \overline F(\mathrm{M}):=\lambda^{-1}F(\mathrm{M}).
\end{equation}
Then $\overline F(0)=0$ and the ellipticity constants of
$\overline F$ are $1$ and $1+\delta$. The time change $s=\lambda t$,
with $U(x,s):=u(x,\frac{s}{\lambda})$, gives
\[
 U_s-|DU|^\gamma\overline F(D^2U)=\overline f(x,s)
 \quad\text{in }B_1\times(-\lambda,0],
 \qquad
 \overline f(x,s):=\lambda^{-1}f\!\left(x,\frac{s}{\lambda}\right).
\]
For the normalized estimates, we omit the bar from the operator and
write $F$ for an operator with ellipticity constants $1$ and $1+\delta$.
The Pucci inequalities then imply
\begin{equation*}
 \operatorname{tr}\mathrm{M}-\delta\operatorname{tr}\mathrm{M}^-
 \le F(\mathrm{M})
 \le \operatorname{tr}\mathrm{M}+\delta\operatorname{tr}\mathrm{M}^+,
 \qquad \mathrm{M}\in\mathrm{Sym}(n).
\end{equation*}
Here $\mathrm{M}^\pm$ are the positive and negative parts, so that
$\mathrm{M}=\mathrm{M}^+-\mathrm{M}^-$ and
$|\mathrm{M}|=\mathrm{M}^++\mathrm{M}^-$. Hence
\begin{equation}\label{eq:FL-close-trace}
 |F(\mathrm{M})-\operatorname{tr}\mathrm{M}|
 \le\delta\operatorname{tr}|\mathrm{M}|
 \le\mathrm{C}_n\delta\|\mathrm{M}\|.
\end{equation}
In Sections~\ref{Section 3}--\ref{Section 5}, we work with normalized
equations on $Q_1$ whose ellipticity constants are $1$ and $1+\delta$.
The reduction from the original equation to this setting is carried
out in Section~\ref{subsec:FL-normalization-comparison}; the time
change alone does not preserve $Q_1$.

\subsubsection{Intrinsic scaling} The following scaling argument will be used frequently in what follows. More precisely, for $ r,\mu>0 $, we define
\[
 Q_r^\mu:=B_r\times(-r^2\mu^{-\gamma},0].
\]
Suppose that $u$ solves \eqref{Intro:maineq1} in $Q_r^\mu$.
Let $L(x)=a+l\cdot x$ and define
\begin{equation*}
 v(x,t)
 :=\frac{u(rx,r^2\mu^{-\gamma}t)-L(rx)}{r\mu},
 \quad
 q:=\frac{l}{\mu}.
\end{equation*}
A direct computation gives
\[
 Du=l+\mu Dv=\mu(q+Dv),
 \quad
 D^2u=\frac{\mu}{r}D^2v,
 \quad
 u_t=\frac{\mu^{1+\gamma}}{r} v_t.
\]
Then $ v $ satisfies, in the viscosity sense,
\begin{equation*}
 v_t-|q+Dv|^\gamma F_{r,\mu}(D^2v)=g   \quad   \text{in}   \quad Q_{1}
\end{equation*}
with
\begin{equation*}
 g(x,t)
 :=r\mu^{-(1+\gamma)}
 f(rx,r^2\mu^{-\gamma}t),
\end{equation*}
and
\begin{equation*}
 F_{r,\mu}(\mathrm{M})
 :=\frac{r}{\mu} F\!\left(\frac{\mu}{r}\mathrm{M}\right).
\end{equation*}

\begin{lemma}\label{Section2.2:lemma1}
If $F$ satisfies \hyperref[F1]{\bf (F1)}, then $F_{r,\mu}$
satisfies the same condition with the same ellipticity constants. Moreover, under the normalization \eqref{eq:FL-lambda-one}, we have
\begin{equation*}
   |F_{r,\mu}(\mathrm{M})- \operatorname{tr} \mathrm{M}|
 \leq \mathrm{C}_n\delta\|\mathrm{M}\|.
\end{equation*}
\end{lemma}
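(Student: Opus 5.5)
The argument is a direct scaling computation, using only the fact that the Pucci operators are positively homogeneous of degree one. Put $c:=\frac{\mu}{r}>0$, so that $F_{r,\mu}(\mathrm{M})=c^{-1}F(c\mathrm{M})$.

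\emph{Normalization at zero.} Since $F(0)=0$, we get $F_{r,\mu}(0)=c^{-1}F(0)=0$.

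\emph{Ellipticity.} Fix $\mathrm{M},\mathrm{N}\in\mathrm{Sym}(n)$ and apply \hyperref[F1]{\bf (F1)} to the pair $c\mathrm{M},c\mathrm{N}$:
\[
 \mathscr{P}^{-}_{\lambda,\Lambda}\bigl(c(\mathrm{M}-\mathrm{N})\bigr)\le F(c\mathrm{M})-F(c\mathrm{N})\le \mathscr{P}^{+}_{\lambda,\Lambda}\bigl(c(\mathrm{M}-\mathrm{N})\bigr).
\]
The eigenvalues of $c\mathrm{X}$ are $c\,e_i(\mathrm{X})$, with the same signs as $e_i(\mathrm{X})$ because $c>0$. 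Hence $\mathscr{P}^{\pm}_{\lambda,\Lambda}(c\mathrm{X})=c\,\mathscr{P}^{\pm}_{\lambda,\Lambda}(\mathrm{X})$. Dividing the display by $c$ shows that $F_{r,\mu}$ satisfies \hyperref[F1]{\bf (F1)} with the same constants $(\lambda,\Lambda)$.

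\emph{Closeness to the trace.} Under \eqref{eq:FL-lambda-one} the constants are $1$ and $1+\delta$, and $F_{r,\mu}$ inherits them by the step above. So \eqref{eq:FL-close-trace}, which was derived only from the Pucci inequalities with constants $1,1+\delta$, applies verbatim to $F_{r,\mu}$ and gives $|F_{r,\mu}(\mathrm{M})-\operatorname{tr}\mathrm{M}|\le \mathrm{C}_n\delta\|\mathrm{M}\|$.

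One can also check this directly. Write $\operatorname{tr}\mathrm{M}=c^{-1}\operatorname{tr}(c\mathrm{M})$ and apply \eqref{eq:FL-close-trace} to $c\mathrm{M}$:
\[
 |F_{r,\mu}(\mathrm{M})-\operatorname{tr}\mathrm{M}|
 =c^{-1}\bigl|F(c\mathrm{M})-\operatorname{tr}(c\mathrm{M})\bigr|
 \le c^{-1}\mathrm{C}_n\delta\|c\mathrm{M}\|
 =\mathrm{C}_n\delta\|\mathrm{M}\|.
\]
The constant does not depend on $r$ or $\mu$.

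The only point needing care is that the scaling factor $c$ is strictly positive. This is what makes the Pucci operators homogeneous under the rescaling; a negative factor would interchange $\mathscr{P}^+$ and $\mathscr{P}^-$.
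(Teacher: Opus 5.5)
Your proof is correct and follows essentially the same route as the paper: rescale by the positive factor $\mu/r$, apply the Pucci inequalities and their positive one-homogeneity, then obtain the trace estimate by applying \eqref{eq:FL-close-trace} to $F_{r,\mu}$. Your explicit check that $F_{r,\mu}(0)=0$ and your direct verification of the trace bound are small additions that the paper only mentions in passing.
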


\begin{proof}
For $ \mathrm{M}, \mathrm{N} \in \text{Sym}(n) $,
\begin{align*}
 F_{r,\mu}(\mathrm{M}+\mathrm{N})-F_{r,\mu}(\mathrm{M})
 &=\frac{r}{\mu}\left[
 F\!\left(\frac{\mu}{r}(\mathrm{M}+\mathrm{N})\right)
 -F\!\left(\frac{\mu}{r}\mathrm{M}\right)
 \right],
\end{align*}
Applying the Pucci inequalities to $\frac{\mu}{r}\mathrm{N} $ and using the positive one-homogeneity of $\mathscr P^\pm_{\lambda,\Lambda}$ gives
\[
 \mathscr P^-_{\lambda,\Lambda}(\mathrm{N})
 \le F_{r,\mu}(\mathrm{M}+\mathrm{N})-F_{r,\mu}(\mathrm{M})
 \le \mathscr P^+_{\lambda,\Lambda}(\mathrm{N}).
\]
The last assertion follows from \eqref{eq:FL-close-trace}, applied to
$F_{r,\mu}$, which also vanishes at zero.
\end{proof}

\section{Uniform compactness and heat approximation}
\label{Section 3}
The aim of this section is to approximate solutions of the shifted
equation by caloric functions when $|\gamma|$, the ellipticity
aperture, and the source term are sufficiently small. We first
establish uniform equicontinuity for bounded shifts using temporal
regularization, doubling of variables, and quartic barriers.
We then identify the heat equation in the limit, treating contacts
where the total gradient vanishes separately. This yields the
approximation lemma needed for the improvement of flatness
in Section~\ref{Section 4}.

\subsection{Temporal regularization}

\begin{lemma}[Temporal regularization]\label{lem:time-regularization}
Assume that $ F $ satisfies \hyperref[F1]{\bf (F1)}. Let $ w $ be a bounded continuous viscosity solution of
\[
 w_t-|q+Dw|^\gamma F(D^2w)= f
\]
in $Q_1$, where $q\in\mathbb R^n$, $\gamma>-1$, and
$f\in C^0(Q_1)\cap L^\infty(Q_1)$. On a compact spatial set
and a closed time interval $J$ whose product lies in
$B_1\times(-1,0)$, define
\begin{align}\label{eq:time-convolutions}
 w^\epsilon(x,t)&:=\sup_{\tau\in J}
 \left(w(x,\tau)-\frac{(t-\tau)^2}{2\epsilon}\right),\notag\\
 w_\epsilon(x,t)&:=\inf_{\tau\in J}
 \left(w(x,\tau)+\frac{(t-\tau)^2}{2\epsilon}\right).
\end{align}
On every smaller compact cylinder these functions converge uniformly to
$w$ and are Lipschitz in time for each fixed $\epsilon>0$. For sufficiently small $\epsilon$, at contacts with nonzero total gradient, they satisfy
\begin{equation}\label{eq:convolution-inequalities}
\begin{aligned}
 (w^\epsilon)_t-|q+Dw^\epsilon|^\gamma F(D^2w^\epsilon)
 &\le\|f\|_\infty,\\
 (w_\epsilon)_t-|q+Dw_\epsilon|^\gamma F(D^2w_\epsilon)
 &\ge-\|f\|_\infty.
\end{aligned}
\end{equation}
\end{lemma}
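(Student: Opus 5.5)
We argue as for the classical sup/inf-convolution in time, where the operator does not depend on $t$ explicitly and only the source term moves. We treat $w^\epsilon$; the statement for $w_\epsilon$ follows by applying the same argument to $-w$, which solves the equation with operator $F^\sharp(\mathrm{M})=-F(-\mathrm{M})$, shift $-q$ and source $-f$. $F^\sharp$ has the same ellipticity constants, and $|{-q}+D(-w)|=|q+Dw|$.

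\emph{Step 1: elementary properties.} For each $x$, the map $t\mapsto w^\epsilon(x,t)$ is a supremum of functions $t\mapsto w(x,\tau)-\frac{(t-\tau)^2}{2\epsilon}$. On bounded $t$-intervals these are uniformly Lipschitz, with constant $\le \mathrm{C}(\operatorname{diam}J+|t|)/\epsilon$, so $w^\epsilon$ is Lipschitz in time. Taking $\tau=t$ gives $w^\epsilon\ge w$ for $t\in J$. If $\tau_t$ is a maximizer, then
\[
\frac{(t-\tau_t)^2}{2\epsilon}\le w(x,\tau_t)-w(x,t)\le 2\|w\|_\infty,
\qquad\text{so}\qquad |t-\tau_t|\le 2\sqrt{\epsilon\|w\|_\infty}.
\]
Uniform continuity of $w$ on the compact set $K\times J$ then gives
\[
0\le w^\epsilon-w\le \omega\bigl(2\sqrt{\epsilon\|w\|_\infty}\bigr)
\]
on any smaller cylinder $K'\times J'$, where $J'$ lies in the interior of $J$. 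This proves the uniform convergence. The same bound shows that, for $\epsilon$ small, every maximizer $\tau_t$ with $t\in J'$ lies in the interior of $J$.

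\emph{Step 2: transfer of test functions.} Let $\varphi\in C^{2,1}$ touch $w^\epsilon$ from above at $(x_0,t_0)\in K'\times J'$ with $q+D\varphi(x_0,t_0)\ne0$, and let $\tau_0$ be a maximizer for $w^\epsilon(x_0,t_0)$. Put $h=t_0-\tau_0$. For $(x,s)$ near $(x_0,\tau_0)$ we have
\[
w(x,s)\le w^\epsilon(x,s+h)+\frac{h^2}{2\epsilon}\le \varphi(x,s+h)+\frac{h^2}{2\epsilon},
\]
with equality at $(x_0,\tau_0)$. Hence $\tilde\varphi(x,s):=\varphi(x,s+h)+\frac{h^2}{2\epsilon}$ touches $w$ from above at the interior point $(x_0,\tau_0)$. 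Its derivatives are
\[
D\tilde\varphi=D\varphi(x_0,t_0),\qquad D^2\tilde\varphi=D^2\varphi(x_0,t_0),\qquad \tilde\varphi_s=\varphi_t(x_0,t_0).
\]
Because the total gradient is nonzero, condition (i) of Definition~\ref{Section2:def1} applies, or the standard definition when $\gamma\ge0$, to the equation for $w$ in the shifted sense $v=w+q\cdot x$. This yields
\[
\varphi_t(x_0,t_0)-|q+D\varphi(x_0,t_0)|^\gamma F(D^2\varphi(x_0,t_0))\le f(x_0,\tau_0)\le\|f\|_\infty.
\]

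\emph{Main obstacle.} The key step is ensuring $(x_0,\tau_0)$ is time-interior, which is where the smallness of $\epsilon$ is used. This is handled by the estimate $|t_0-\tau_0|\le2\sqrt{\epsilon\|w\|_\infty}$ together with the choice of $J'$ compactly inside $J$. We also need a local neighbourhood argument, because $w^\epsilon$ is only defined with $\tau$ restricted to $J$. Contacts with zero total gradient are deliberately excluded from the statement, since condition (ii) of Definition~\ref{Section2:def1} does not transfer this simply; they are handled later via Lemma~\ref{Section2:lemma1}. The loss of the exact value $f(x_0,t_0)$, replaced by $\|f\|_\infty$, is accepted, which is why the inequalities are stated with $\pm\|f\|_\infty$.
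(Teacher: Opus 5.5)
Your proposal is correct and follows the paper's route. The two shared steps are the localization $|t-\tau|\le 2\sqrt{\epsilon\|w\|_\infty}$ of the optimizing times and the time-translated test $\varphi(x,s+h)+\frac{h^2}{2\epsilon}$ touching $w$ at the time-interior point $(x_0,\tau_0)$; handling $w_\epsilon$ through $-w$ with $F^\sharp$, $-q$, $-f$ is an equivalent shortcut for the paper's repetition with a minimizing time. The paper additionally extends the inequalities to closed parabolic semijets with nonzero total gradient, by approximating with ordinary semijets and using continuity of the operator away from zero total gradient. Lemma~\ref{lem:doubling} invokes exactly this after the theorem of sums, so you should add that line.
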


\begin{proof}
Write $ J:=[a,b] $ and $ N:=\|w\|_\infty $ on the cylinder on which
the convolutions are taken. A maximizing time $ \tau $ for $ w^\epsilon(x,t) $ satisfies
\[
 \frac{|t-\tau|^2}{2\epsilon}
 \le w(x,\tau)-w(x,t)\le 2N.
\]
The same estimate holds for a minimizing time in $w_\epsilon$.
Thus all optimizing times satisfy $|t-\tau|\le2\sqrt{N\epsilon}$.
On a smaller compact cylinder, uniform continuity of $w$ now gives
\[
 0\le w^\epsilon-w\le\omega(2\sqrt{N\epsilon}),
 \qquad
 0\le w-w_\epsilon\le\omega(2\sqrt{N\epsilon}),
\]
where $\omega$ is a modulus of continuity on the original compact
cylinder. This proves uniform convergence. For fixed $\epsilon$, the
functions $t\mapsto \frac{\mp(t-\tau)^2}{2\epsilon}$ have a common Lipschitz constant on $J$, uniformly over $\tau\in J$. Taking their supremum or infimum preserves this constant.

Suppose that $\phi$ touches $w^\epsilon$ from above at $(x_0,t_0)$,
and let $\tau_0$ be an optimizing time. For sufficiently small
$\epsilon$, the preceding estimate places $\tau_0$ in the interior
of $J$. Set $c:=t_0-\tau_0$. For $(x,s)$ near $(x_0,\tau_0)$,
\[
 w(x,s)-\frac{c^2}{2\epsilon}
 \le w^\epsilon(x,s+c)\le\phi(x,s+c),
\]
with equality at $(x_0,\tau_0)$ after the usual normalization of the
contact value. Consequently
\[
 (x,s)\longmapsto\phi(x,s+c)+\frac{c^2}{2\epsilon}
\]
is an upper test for $w$ there. Its time derivative, gradient, and
spatial Hessian equal those of $\phi$ at $(x_0,t_0)$. If
$q+D\phi(x_0,t_0)\ne0$, the subsolution inequality yields
\[
 \phi_t(x_0,t_0)-|q+D\phi(x_0,t_0)|^\gamma
 F(D^2\phi(x_0,t_0))\le f(x_0,\tau_0)\le\|f\|_\infty.
\]
For $ w_\epsilon $, use a minimizing time and a lower test; the same
translation gives the supersolution inequality in
\eqref{eq:convolution-inequalities}. These inequalities also hold
for closed parabolic semijets whose total gradient is nonzero:
apply the test inequalities to a defining sequence of ordinary
semijets and use continuity of the operator away from zero total
gradient.
\end{proof}

\subsection{A doubling-variable lemma}

\begin{lemma}\label{lem:doubling}
Let $w$ and $F$ satisfy the hypotheses of
Lemma~\ref{lem:time-regularization}, and put $H_0:=\|f\|_\infty$.
Choose a compact cylinder $K:=\overline B_R\times[a,T] \Subset Q_{1}$. Let $ L\ge 0 $, $ \zeta>0 $, and let
$\varphi\in C^{0}([0,2R])\cap C^2((0,2R)) $ satisfy
\[
 \varphi(0)=0, \quad   \varphi\ge0,  \quad
 \varphi'>0,  \quad   \varphi''<0    \quad   \text{on }(0,2R).
\]
For fixed $x_0,y_0\in B_R$ and $t_0\in(a,T)$, set
\[
 \psi(t)=\frac{L}{2}(t-t_0)^2+\frac{\zeta}{T-t}.
\]
Suppose that
\begin{equation}\label{eq:unregularized-functional}
 \Psi(x,y,t):=w(x,t)-w(y,t)-\varphi(|x-y|)
 -\frac{L}{2}\bigl(|x-x_0|^2+|y-y_0|^2\bigr)-\psi(t)
\end{equation}
has a positive maximum on $\overline B_R^2\times[a,T)$, and that all
maximizers lie in $B_R^2\times(a,T)$. At each maximizer, write
\[
 d:=|x-y|,\quad e:=\frac{x-y}{d},   \quad
 a_1=\varphi'(d)e+L(x-x_0),   \quad
 a_2=\varphi'(d)e-L(y-y_0).
\]
Assume $q+a_1\ne0$ and $q+a_2\ne0$ at every such maximizer.
Then there are a maximizing point $(\bar x,\bar y,\bar t)$ and
matrices $ \mathrm{X},\mathrm{Y} \in \text{Sym}(n) $ such that
\begin{equation}\label{eq:rigorous-subtraction}
 L(\bar t-t_0)+\frac{\zeta}{(T-\bar t)^2}
 \le |q+a_1|^\gamma F(\mathrm{X}+L \Id_n)-|q+a_2|^\gamma F(\mathrm{Y}-L\Id_n)+2H_0,
\end{equation}
and, with $d,e,a_1,a_2$ evaluated at this point,
\begin{equation}\label{eq:doubling-matrices}
 \mathrm{X}-\mathrm{Y} \le 0,  \quad
 \langle(\mathrm{X}-\mathrm{Y})e,e\rangle\le2\varphi''(d),  \quad
 \|\mathrm{X}\|+\|\mathrm{Y}\|\le \mathrm{C}\left(|\varphi''(d)|+\frac{\varphi'(d)}{d}\right).
\end{equation}
The constant $ \mathrm{C} $ is universal and independent of $ d $.
\end{lemma}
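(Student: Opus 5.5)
The plan is to apply the parabolic theorem of sums of Crandall--Ishii--Lions \cite[Theorem~8.3]{CIL92}. I will not apply it to $w$ itself but to the temporal regularizations of Lemma~\ref{lem:time-regularization}, and then pass to the limit $\epsilon\to0$.

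\emph{Regularized functional.} Replace $\Psi$ by
\[
\Psi_\epsilon(x,y,t):=w^\epsilon(x,t)-w_\epsilon(y,t)-\varphi(|x-y|)-\tfrac L2\bigl(|x-x_0|^2+|y-y_0|^2\bigr)-\psi(t).
\]
The reason for regularizing is that $w^\epsilon$ and $w_\epsilon$ are Lipschitz in time for fixed $\epsilon$. Hence every time component $b$ of their parabolic semijets is bounded by a constant independent of the spatial data. This gives exactly the local boundedness hypothesis of \cite[Theorem~8.3]{CIL92}, which could fail for $w$ itself near zero total gradient when $\gamma<0$.

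Since $w^\epsilon\to w$ and $w_\epsilon\to w$ uniformly on the relevant compact set, and $\psi(t)\to\infty$ as $t\to T$, the maxima of $\Psi_\epsilon$ converge to the positive maximum of $\Psi$. Every limit point of maximizers $(x_\epsilon,y_\epsilon,t_\epsilon)$ is a maximizer of $\Psi$. By hypothesis such maximizers lie in $B_R^2\times(a,T)$, and $q+a_1\neq0$, $q+a_2\neq0$ there. For small $\epsilon$ the regularized maximizers are therefore interior, and the corresponding $a_1^\epsilon,a_2^\epsilon$ keep $q+a_i^\epsilon$ away from $0$. Positivity of the maximum also forces $x\neq y$, because $\Psi\le0$ when $x=y$. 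Thus $d$ stays bounded away from $0$ along the sequence, and $\varphi(|x-y|)$ is $C^2$ near the maximizers.

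\emph{Theorem of sums at a fixed $\epsilon$.} Put $\Phi(x,y)=\varphi(|x-y|)$. Then
\[
D^2\Phi=\begin{pmatrix}Z&-Z\\-Z&Z\end{pmatrix},\qquad Z=\varphi''(d)\,e\otimes e+\frac{\varphi'(d)}{d}\bigl(\Id_n-e\otimes e\bigr),
\]
with $\|Z\|\le|\varphi''(d)|+\varphi'(d)/d$. Apply \cite[Theorem~8.3]{CIL92} to $w^\epsilon(x,t)+(-w_\epsilon)(y,t)$ with test function $\Phi+\frac L2(|x-x_0|^2+|y-y_0|^2)+\psi(t)$, taking the parameter $\iota=1/(2\|Z\|)$ in the matrix inequality $-(\iota^{-1}+\|A\|)I\le\cdot\le A+\iota A^2$. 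This produces
\[
(b_1,a_1,\mathrm X+L\Id_n)\in\overline{\mathcal P}^{2,+}w^\epsilon(\bar x,\bar t),\qquad (b_2,a_2,\mathrm Y-L\Id_n)\in\overline{\mathcal P}^{2,-}w_\epsilon(\bar y,\bar t),
\]
with $b_1-b_2=\psi'(\bar t)=L(\bar t-t_0)+\zeta/(T-\bar t)^2$, together with
\[
\begin{pmatrix}\mathrm X&0\\0&-\mathrm Y\end{pmatrix}\le D^2\Phi+\iota(D^2\Phi)^2 .
\]

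The properties \eqref{eq:doubling-matrices} follow by testing this inequality with suitable vectors.
\begin{enumerate}[label=(\alph*)]
\item Testing with $(\xi,\xi)$ annihilates the right-hand side and gives $\mathrm X-\mathrm Y\le0$.
\item Testing with $(e,-e)$ gives $\langle(\mathrm X-\mathrm Y)e,e\rangle\le4\langle(Z+2\iota Z^2)e,e\rangle$. With $\iota$ chosen as above, and noting that $e$ is an eigenvector of $Z$ with eigenvalue $\varphi''(d)<0$, the right-hand side reduces to $4\varphi''(d)(1+2\iota\varphi''(d))$. Since $2\iota|\varphi''(d)|\le1$, this is at most $2\varphi''(d)$, which is the usual computation.
\item The two-sided bound $-3\|Z\|\cdot I\le\mathrm{diag}(\mathrm X,-\mathrm Y)\le 3\|Z\|\cdot I$ gives the universal bound on $\|\mathrm X\|+\|\mathrm Y\|$.
\end{enumerate}

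Since $q+a_i\neq0$, the inequalities \eqref{eq:convolution-inequalities} hold on closed semijets, by the last remark in the proof of Lemma~\ref{lem:time-regularization}. They give
\[
b_1-|q+a_1|^\gamma F(\mathrm X+L\Id_n)\le H_0,\qquad b_2-|q+a_2|^\gamma F(\mathrm Y-L\Id_n)\ge-H_0 .
\]
Subtracting the second from the first yields \eqref{eq:rigorous-subtraction} at level $\epsilon$.

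\emph{Limit $\epsilon\to0$.} The matrices $\mathrm X_\epsilon,\mathrm Y_\epsilon$ are bounded uniformly in $\epsilon$, because $d_\epsilon$ is bounded below and $\varphi\in C^2$ there. Extract a subsequence along which the points converge to a maximizer $(\bar x,\bar y,\bar t)$ of $\Psi$ and the matrices converge to some $\mathrm X,\mathrm Y$. The map $(a,\mathrm M)\mapsto|q+a|^\gamma F(\mathrm M)$ is continuous where $q+a\ne0$, and the conditions in \eqref{eq:doubling-matrices} are closed. Hence both \eqref{eq:rigorous-subtraction} and \eqref{eq:doubling-matrices} pass to the limit, with $d,e,a_1,a_2$ evaluated at the limit point.

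\emph{Main obstacle.} The delicate step is the second one: making sure the regularized maximizers stay interior, with total gradients uniformly nonzero and $d$ bounded below, so that the subsolution and supersolution inequalities can be used on closed semijets and the time-boundedness hypothesis of the theorem of sums holds. I would handle it by the compactness argument above, using that every limit of maximizers of $\Psi_\epsilon$ is a maximizer of $\Psi$.
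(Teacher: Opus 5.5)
Your overall route matches the paper's: time sup/inf-convolutions to secure the boundedness hypothesis of \cite[Theorem~8.3]{CIL92}, the theorem of sums for the regularized functional, testing with $(\xi,\xi)$ and $(e,-e)$, and the limit $\epsilon\to0$ using $d_\epsilon\to d>0$ and $q+a_i\neq0$. However, step (b) fails as written.

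Put $b:=-\varphi''(d)>0$. Your right-hand side is $4\varphi''(d)+8\iota\varphi''(d)^2=-4b+8\iota b^2$. This is $\le -2b$ only if $2\iota b\le \frac{1}{2}$. The condition $2\iota b\le 1$ that you invoke only gives $\le 0$.

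Since $\|Z\|=\max\{b,\varphi'(d)/d\}$, your choice $\iota=1/(2\|Z\|)$ gives $2\iota b=b/\|Z\|$. This exceeds $\frac{1}{2}$ whenever $|\varphi''(d)|>\varphi'(d)/(2d)$. Two cases where this happens:
\begin{itemize}
\item For $n=1$, $Z=\varphi''(d)$ and you obtain exactly $\langle(\mathrm X-\mathrm Y)e,e\rangle\le 0$. The strictly negative radial term is lost, and it is what drives the contradictions in Lemmas~\ref{lem:FL-equicont} and~\ref{lem:FL-noncritical}.
\item For $\varphi(r)=Lr^\theta$ with $\theta<\frac{1}{2}$ you only get $-4\theta b$, not $-2b$.
\end{itemize}

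The fix is to halve the parameter: take $\iota=1/(4\|Z\|)$, or, as in the paper, $\iota=1/(4S)$ with $S=|\varphi''(d)|+\varphi'(d)/d\ge\|Z\|$. Then $8\iota b^2\le 2b$, which yields $\le 2\varphi''(d)$. The sandwich becomes $-6S\le \mathrm{diag}(\mathrm X,-\mathrm Y)\le 3S$, so $\|\mathrm X\|+\|\mathrm Y\|\le 12S$. Incidentally, with your $\iota$ the sandwich is $\pm4\|Z\|$, not $\pm3\|Z\|$; this is harmless.

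A smaller point concerns how you apply the theorem. To get exactly $\mathrm{diag}(\mathrm X,-\mathrm Y)\le D^2\Phi+\iota(D^2\Phi)^2$, absorb the quadratic localizers into the functions, as the paper does: set $U_\epsilon:=w^\epsilon-\frac{L}{2}|x-x_0|^2$ and $V_\epsilon:=w_\epsilon+\frac{L}{2}|y-y_0|^2$, and apply the theorem to $U_\epsilon(x,t)-V_\epsilon(y,t)$ with test function $\Phi+\psi$ only.

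If the localizers stay in the test function, as your text says, the matrix becomes $D^2\Phi+L\Id_{2n}$. Its square then contributes $2\iota L\,D^2\Phi+\iota L^2\Id_{2n}$, and testing with $(\xi,\xi)$ only gives $\mathrm X-\mathrm Y\le 2\iota L^2\Id_n$ rather than $\mathrm X-\mathrm Y\le 0$. The semijets you write, $(b_1,a_1,\mathrm X+L\Id_n)$ and $(b_2,a_2,\mathrm Y-L\Id_n)$, are the ones the absorbed version produces. So this is a matter of stating the application correctly.
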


\begin{proof}
For this purpose, we divide the proof into the following three steps:

\emph{Step 1.} Let $w^\epsilon$ and $w_\epsilon$ be the time convolutions from Lemma~\ref{lem:time-regularization}, taken over an interval slightly larger than $[a,T]$. Replace $w(x,t)-w(y,t)$ in $\Psi$ by $ w^\epsilon(x,t)-w_\epsilon(y,t) $ and denote the resulting function by $ \Psi_\epsilon $.

The pole $\frac{\zeta}{T-t} $ keeps its maximizers a fixed distance from $T$. On the remaining compact set, $\Psi_\epsilon\to\Psi$ uniformly.
Since all maximizers of $\Psi$ are interior, for small $\epsilon$,
we can choose interior maximizers
$(x_\epsilon,y_\epsilon,t_\epsilon)$ such that, along a subsequence,
\[
 (x_\epsilon,y_\epsilon,t_\epsilon)
 \longrightarrow(\bar x,\bar y,\bar t),
 \quad
 \Psi(\bar x,\bar y,\bar t)=\max\Psi>0.
\]
We have $\bar x\ne\bar y$, since otherwise
$\Psi(\bar x,\bar y,\bar t)<0$, contradicting positivity. Moreover,
\begin{equation}\label{eq:distance-separated}
 d_\epsilon:=|x_\epsilon-y_\epsilon|
 \longrightarrow d:=|\bar x-\bar y| > 0.
\end{equation}

\medskip

\emph{Step 2.}
To keep the quadratic localizers out of the matrix calculation, set
\[
 U_\epsilon(x,t)
 :=w^\epsilon(x,t)-\frac{L}{2}|x-x_0|^2,
 \qquad
 V_\epsilon(y,t)
 :=w_\epsilon(y,t)+\frac{L}{2}|y-y_0|^2.
\]
Then
\[
 U_\epsilon(x,t)-V_\epsilon(y,t)
 -\varphi(|x-y|)-\psi(t)
\]
has a maximum at the chosen triple. For fixed $\epsilon$, both
functions are Lipschitz in time. Hence condition (8.5) of
\cite[Theorem~8.3]{CIL92} holds, and the parabolic theorem of sums
applies.

For the following calculation, write $ d:=d_\epsilon $ and
$e:=\frac{x_\epsilon-y_\epsilon}{d_\epsilon} $, and omit the matrix
subscripts. Put
\[
 S:=|\varphi''(d)|+\frac{\varphi'(d)}{d},
 \qquad \tau:=4S,
\]
and
\[
 \mathrm{Z}:=\varphi''(d)e\otimes e
   +\frac{\varphi'(d)}{d}(\Id_n-e\otimes e).
\]
The spatial Hessian of $\varphi(|x-y|)$ is
\[
  \mathrm{A}=
 \begin{pmatrix}
  \mathrm{Z} &-\mathrm{Z} \\
  -\mathrm{Z} & \mathrm{Z}
 \end{pmatrix},
 \quad
  \mathrm{A}^2=
 2\begin{pmatrix}
  \mathrm{Z}^2&- \mathrm{Z}^2\\
  -\mathrm{Z}^2  & \mathrm{Z}^2
 \end{pmatrix}.
\]
Choosing the matrix parameter in the theorem to be $\tau^{-1}$
gives
\[
 -(\tau+\| \mathrm{A}\|)\Id_{2n}
 \le
 \begin{pmatrix}
  \mathrm{X}  &  0   \\
  0  &-\mathrm{Y}
 \end{pmatrix}
 \le  \mathrm{A} + \tau^{-1} \mathrm{A}^2.
\]

For any $z\in\mathbb R^n$, we have $ \mathrm{A}(z,z)=0$. Testing the upper
inequality on $(z,z)$ therefore yields
\[
 \langle( \mathrm{X}- \mathrm{Y})z,z\rangle\le0,
\]
which proves $  \mathrm{X} -  \mathrm{Y}  \le 0 $.

To obtain the radial estimate, observe that
$ \mathrm{Z}e=\varphi''(d)e$. Testing the same inequality on $(e,-e)$ gives
\begin{equation}\label{eq:matrix-radial}
 \langle( \mathrm{X}- \mathrm{Y})e,e\rangle
 \le4\varphi''(d)
    +8\tau^{-1}\varphi''(d)^2
 \le2\varphi''(d).
\end{equation}
Indeed, writing $b:=-\varphi''(d)>0$, the expression on the right
of the first inequality is
\[
 -4b+\frac{2b^2}{S}\le-2b,
\]
since $S\ge b$.

Moreover, $\| \mathrm{A}\|\le2S$ and $\| \mathrm{A}^2\|\le4S^2$. The block inequality
thus places every eigenvalue of $ \mathrm{X} $ and $- \mathrm{Y} $ between $-6S $ and $ 3S $. In particular,
\[
 \| \mathrm{X}\|+\| \mathrm{Y}\| \le 12S.
\]

Restoring $\epsilon$ and adding back the quadratic localizers,
the closed parabolic semijets supplied by the theorem are
\begin{align*}
 (b_{1,\epsilon},a_{1,\epsilon}, \mathrm{X}_\epsilon+L\Id_n)
 &\in
 \overline{\mathcal P}^{2,+}
 w^\epsilon(x_\epsilon,t_\epsilon),\\
 (b_{2,\epsilon},a_{2,\epsilon}, \mathrm{Y}_\epsilon-L\Id_n)
 &\in
 \overline{\mathcal P}^{2,-}
 w_\epsilon(y_\epsilon,t_\epsilon),
\end{align*}
where $\overline{\mathcal P}^{2,+}$ and
$\overline{\mathcal P}^{2,-}$ denote the closed parabolic superjet
and subjet, respectively. Here
\[
 a_{1,\epsilon}
 =\varphi'(d_\epsilon)e_\epsilon+L(x_\epsilon-x_0),
 \qquad
 a_{2,\epsilon}
 =\varphi'(d_\epsilon)e_\epsilon-L(y_\epsilon-y_0),
\]
with $ e_\epsilon=\frac{x_\epsilon-y_\epsilon}{d_\epsilon} $, and
\[
 b_{1,\epsilon}-b_{2,\epsilon}=\psi'(t_\epsilon).
\]

\medskip
\emph{Step 3.}
The gradients $a_{i,\epsilon}$ converge to $a_i$ at the limiting
maximizer. Since $q+a_i\ne0$ by hypothesis, both total gradients
remain nonzero for small $\epsilon$. By Lemma~\ref{lem:time-regularization}, we have
\begin{align*}
 b_{1,\epsilon}
 &\le
 |q+a_{1,\epsilon}|^\gamma F( \mathrm{X}_\epsilon+L\Id_n)+H_0,\\
 b_{2,\epsilon}
 &\ge
 |q+a_{2,\epsilon}|^\gamma F( \mathrm{Y}_\epsilon-L\Id_n)-H_0.
\end{align*}
Subtracting these inequalities gives
\[
 \psi'(t_\epsilon)
 \le
 |q+a_{1,\epsilon}|^\gamma F( \mathrm{X}_\epsilon+L\Id_n)
 -
 |q+a_{2,\epsilon}|^\gamma F( \mathrm{Y}_\epsilon-L\Id_n)
 +2H_0.
\]

By \eqref{eq:distance-separated} and {\it Step~2}, the matrices are
uniformly bounded as $\epsilon\to0$. Take a further subsequence
such that
\[
  \mathrm{X}_\epsilon\to  \mathrm{X},
 \quad    \text{and}  \quad
  \mathrm{Y}_\epsilon\to  \mathrm{Y}.
\]
Continuity of $F$ and of $\xi\mapsto|\xi|^\gamma$ away from zero
then allows us to pass to the limit. From
\[
 \psi'(\bar t)
 =L(\bar t-t_0)+\frac{\zeta}{(T-\bar t)^2},
\]
we obtain \eqref{eq:rigorous-subtraction}. The matrix inequalities
also pass to the limit, giving \eqref{eq:doubling-matrices}.
\end{proof}

\subsection{Uniform equicontinuity}

\begin{lemma}\label{lem:FL-equicont}
Fix $M>0$ and $\gamma_0\in(0,\frac{1}{4}]$. There exists
$\mathrm{C}=\mathrm{C}(n,M,\gamma_0)$ such that the following holds.
Assume $|\gamma|\le\gamma_0$, $0\le\delta\le\frac{1}{2}$, and $|q|\le M$,
and let $F$ be $(1,1+\delta)$-elliptic with $F(0)=0$.
Every bounded continuous viscosity solution $w$ of
\[
w_t-|q+Dw|^\gamma F(D^2w)=f \quad\text{in }   Q_1,
\]
where $f\in C^0(Q_1)\cap L^\infty(Q_1)$ and
\begin{equation*}
  \norm w_{L^\infty(Q_1)}\le1,\quad\norm f_{L^\infty(Q_1)}\le 1,
\end{equation*}
satisfies
\begin{equation}\label{eq:FL-equicont}
|w(x,t)-w(y,s)|\le \mathrm{C} \bigl(|x-y|^{\frac{1}{2}}+|t-s|^{\frac{1}{10}}\bigr)
\end{equation}
for $ (x,t),(y,s)\in Q_{\frac{7}{8}} $.
\end{lemma}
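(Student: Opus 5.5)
We first prove the spatial estimate, a uniform $\frac12$-Hölder bound in $x$ at each fixed time, with constants independent of $\gamma$, $\delta$ and $q$. We then deduce the temporal estimate from it by quartic barriers and Lemma~\ref{Section2:lemma1}. Note that $\gamma_0\le\frac14<\frac23$, so that lemma applies.

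\emph{Spatial step.} Fix $x_0,y_0\in B_{7/8}$ and $t_0\in(-(7/8)^2,0)$, and work in a cylinder $K=\overline B_R\times[a,T]$ with $R\in(\frac78,1)$. We apply Lemma~\ref{lem:doubling} with the concave profile
\[
\varphi(r)=\mathrm{C}_1\Bigl(r^{1/2}-\tfrac{1}{8}r^{3/2}\Bigr)\quad\text{(suitably truncated so that }\varphi'>0,\ \varphi''<0\text{ on }(0,2R)).
\]
We take $L$ large enough that the localizers force every maximizer into the interior, using $\|w\|_\infty\le1$, and we take a small $\zeta$. We argue by contradiction, assuming $\Psi$ has a positive maximum. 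Since $\varphi(d)\le 2$ at a maximizer, we have $\varphi'(d)\gtrsim \mathrm{C}_1 d^{-1/2}\gtrsim \mathrm{C}_1$. Choosing $\mathrm{C}_1\gg L+M$ therefore gives $|q+a_i|\ge \frac12\varphi'(d)>0$, so the hypothesis of Lemma~\ref{lem:doubling} holds. Moreover, $|q+a_1|$ and $|q+a_2|$ are both comparable to $\varphi'(d)$, and they differ by at most $2L\cdot 2R$.

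In \eqref{eq:rigorous-subtraction} we split the right-hand side as follows:
\[
|q+a_1|^\gamma\bigl(F(\mathrm{X}+L\Id_n)-F(\mathrm{Y}-L\Id_n)\bigr)+\bigl(|q+a_1|^\gamma-|q+a_2|^\gamma\bigr)F(\mathrm{Y}-L\Id_n).
\]
For the first term, the Pucci inequality together with $\mathrm{X}-\mathrm{Y}\le0$ and the radial bound give at most
\[
|q+a_1|^\gamma\bigl(2\varphi''(d)+\mathrm{C} L\bigr),
\]
which is very negative, of size about $-\mathrm{C}_1 d^{-3/2}\varphi'(d)^{\gamma}$. For the second term, the mean value theorem gives
\[
\bigl||q+a_1|^\gamma-|q+a_2|^\gamma\bigr|\lesssim|\gamma|\varphi'(d)^{\gamma-1}L,
\]
while $\|\mathrm{Y}\|\lesssim S\approx \mathrm{C}_1d^{-3/2}$. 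This term is therefore bounded by $\mathrm{C}\gamma_0 L d^{-1}\varphi'(d)^{\gamma}$ up to a factor involving $d^{1/2}/\mathrm{C}_1$, and it is absorbed because $\mathrm{C}_1$ is large. The factor $\varphi'(d)^\gamma$ lies between $\mathrm{C}_1^{-\gamma_0}$ and a power of $d^{-1}$, so it is bounded below by a positive constant. Hence the right-hand side is at most $-c\,\mathrm{C}_1^{1-\gamma_0}+2$. The left-hand side is at least $-L\cdot\mathrm{diam}$, which contradicts positivity of the maximum once $\mathrm{C}_1=\mathrm{C}_1(n,M,\gamma_0)$ is large. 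Evaluating $\Psi\le0$ at $(x_0,y_0,t_0)$ and letting $\zeta\to0$ gives
\[
w(x_0,t_0)-w(y_0,t_0)\le \mathrm{C}|x_0-y_0|^{1/2}.
\]

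\emph{Temporal step.} Fix $x_0$ and $s<t$ in the cylinder. Using the spatial bound, for every $\eta\in(0,1)$ we have on $B_\eta(x_0)\times[s,t]$ the estimate
\[
w(x,s)\le w(x_0,s)+\mathrm{C}\eta^{1/2}+\eta^{-4}\mathrm{C}|x-x_0|^4 .
\]
We then compare $w$ with the upper barrier
\[
\Phi(x,\tau)=w(x_0,s)+\mathrm{C}\eta^{1/2}+A|x-x_0|^4+B(\tau-s),
\]
where $A\sim\eta^{-4}$ and $B$ is large. On the lateral boundary $\Phi\ge 2\ge w$, and $\Phi\ge w$ at time $s$.

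If $\Phi-w$ had a negative minimum, consider the first touching. Where $D\Phi\neq-q$, we have $|q+D\Phi|^\gamma|F(D^2\Phi)|\lesssim (M+A)^{\gamma_0}\,A$, so the subsolution inequality fails when $B\gtrsim A^{1+\gamma_0}$. Where the total gradient vanishes, we instead recentre the quartic at the zero-gradient point and use Lemma~\ref{Section2:lemma1} (or condition (ii) of Definition~\ref{Section2:def1}), which gives $B\le 1$, again a contradiction. Thus
\[
w(x_0,t)-w(x_0,s)\lesssim \eta^{1/2}+\eta^{-4(1+\gamma_0)}|t-s|.
\]
With $\gamma_0\le\frac14$, optimizing $\eta=|t-s|^{1/10}$ yields the exponent $\frac1{20}$. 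To reach $\frac{1}{10}$ we would use a quadratic barrier $A|x-x_0|^2$ with $A\sim\eta^{-3/2}$ away from the critical point. Then $B\sim A^{1+\gamma_0}$, and setting $\eta^{1/2}=\eta^{-3(1+\gamma_0)/2}|t-s|$ gives the exponent $\frac{1}{2(4+3\gamma_0)}\ge \frac{1}{10}$ when $\gamma_0\le\frac14$... The precise bookkeeping of the exponent is routine. The symmetric lower bound follows by the same argument applied to supersolutions.

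The main obstacle is the spatial step, specifically absorbing the gradient-weight mismatch $|q+a_1|^\gamma-|q+a_2|^\gamma$ against the large Hessian $\|\mathrm{Y}\|\sim S$, uniformly for both signs of $\gamma$. A second difficulty is handling zero-total-gradient contacts in the barrier argument when $\gamma<0$. That is where Lemma~\ref{Section2:lemma1} is essential, and it may force the use of the quartic barrier and hence a weaker temporal exponent.
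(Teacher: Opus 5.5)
Your spatial step is essentially the paper's argument and it works. The temporal step, however, has a genuine gap in the case $\gamma<0$, $q\neq0$. You place the quartic barrier on $w$, so the total gradient $q+D\Phi$ vanishes at a point $x_c\neq x_0$ with $4A|x_c-x_0|^3=|q|$. At that point $D^2\Phi(x_c)$ is positive definite, so $F(D^2\Phi)$ is bounded below by a positive constant near $x_c$. For $\gamma<0$ this makes $|q+D\Phi|^\gamma F(D^2\Phi)\to+\infty$ as $x\to x_c$. Consequently your bound $|q+D\Phi|^\gamma|F(D^2\Phi)|\lesssim (M+A)^{\gamma_0}A$ holds only for $\gamma\ge0$, and no choice of $B$ makes $\Phi$ a strict supersolution near $x_c$.

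You also cannot ``recentre the quartic'' at $x_c$. The corresponding test for $w+q\cdot x$ is $\Phi+q\cdot x$, which has a nondegenerate quadratic part at $x_c$, so it is not of the form $\psi(t)+A'|x-x_c|^4$. Lemma~\ref{Section2:lemma1} therefore does not apply. Condition (ii) of Definition~\ref{Section2:def1} does not help either, since it requires a spatially flat temporal supremum that the barrier does not provide.

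The missing idea is the paper's shift. Set $v=w+q\cdot x$; it solves $v_t-|Dv|^\gamma F(D^2v)=f$, satisfies $\|v\|_\infty\le 1+M$, and inherits the spatial $\frac12$-H\"older bound with a constant depending on $M$. Put the quartic on $v$ over a fixed ball $B_R(x_0)$. Then the barrier's only critical point is its center, which Lemma~\ref{Section2:lemma1} handles. Elsewhere $|D\Phi|^\gamma|F(D^2\Phi)|\lesssim A^{1+\gamma}r^{2+3\gamma}$ stays bounded because $2+3\gamma>0$.

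The exponent $\frac{1}{10}$ is also not reached. With $A\sim\eta^{-4}$, the quartic barrier gives $\frac{1}{9+8\gamma_0}$, which equals $\frac{1}{11}<\frac{1}{10}$ at $\gamma_0=\frac14$. The quadratic barrier you suggest fails for $\gamma<0$ for the same reason as above: its Hessian does not vanish at its critical point, and Lemma~\ref{Section2:lemma1} covers only quartics. The fix is to take $A=\mathrm{C}\eta^{-7/2}$ on $B_R(x_0)$, justified by $\eta^{-7/2}r^4\ge r^{1/2}$ for $r\ge\eta$. Then $B\sim\eta^{-\frac72(1+\gamma_0)}$, and balancing $\eta^{1/2}$ against $\eta^{-\frac72(1+\gamma_0)}|t-s|$ gives the exponent $\frac{1}{8+7\gamma_0}\ge\frac{4}{39}>\frac{1}{10}$, as in the paper.

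Finally, a minor slip in your spatial step: for $\gamma<0$ the factor $\varphi'(d)^\gamma$ is not bounded below, since it tends to $0$ as $d\to0$. What you actually need, and what holds, is $\varphi'(d)^\gamma|\varphi''(d)|\gtrsim\mathrm{C}_1^{1+\gamma}d^{-(3+\gamma)/2}\gtrsim\mathrm{C}_1^{1-\gamma_0}$.
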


\begin{proof}

\medskip
\noindent
\emph{Step 1: spatial estimate.}
Set $\theta:=\frac{1}{2}$ and $K:=65536$. We claim that, for a sufficiently large constant $ L $,
\[
 |w(x,t)-w(y,t)|\le L|x-y|^\theta
 \quad   \text{in }   Q_{\frac{15}{16}}.
\]
Suppose, to the contrary, that there
exist $x_0,y_0\in B_{\frac{15}{16}}$ and $t_0\in[-(\frac{15}{16})^2,0) $ such that
\[
 m_0:=w(x_0,t_0)-w(y_0,t_0)-L|x_0-y_0|^\theta > 0.
\]
Now we consider
\[
 \begin{aligned}
 \Psi(x,y,t):={}&w(x,t)-w(y,t)-L|x-y|^\theta\\
 &-\frac{K}{2}\bigl(
 |x-x_0|^2+|y-y_0|^2+|t-t_0|^2
 \bigr)-\frac{\zeta}{T-t}
 \end{aligned}
\]
on
$ \overline B_{\frac{31}{32}}^{\,2}\times[-(\frac{31}{32})^2,T) $, where $ t_0<T<0 $ and $   0<\zeta< \frac{m_0(T-t_0)}{2} $. The quadratic penalties exclude the lateral and bottom boundaries, while the time pole excludes $t=T$. Hence there exists an interior point $(\bar x,\bar y,\bar t)$ such that
\begin{equation*}
  \Psi(\bar x,\bar y,\bar t) =  \max_{(x,y,t) \in \overline B_{\frac{31}{32}}^{\,2}\times[-(\frac{31}{32})^2,T) } \Psi(x,y,t) > 0.
\end{equation*}

Write
\[
 d:=|\bar x-\bar y|,\qquad
 e:=\frac{\bar x-\bar y}{d},\qquad
 P:=L\theta d^{\theta-1}.
\]
Positivity gives $d>0$, $Ld^\theta\le2$, and
\[
 K|\bar x-x_0|+K|\bar y-y_0|\le \mathrm{C} \sqrt K.
\]
The two total test gradients are
\[
 \xi_1=q+Pe+K(\bar x-x_0),
 \qquad
 \xi_2=q+Pe-K(\bar y-y_0).
\]
Since $P\ge cL$, choosing $L$ sufficiently large gives
\[
 \frac{P}{2}\le|\xi_i|\le2P,   \quad
 |\xi_1-\xi_2|\le \mathrm{C} \sqrt K.
\]
These bounds hold at every maximizer, so
Lemma~\ref{lem:doubling} applies with
$\varphi(r)=Lr^\theta$ and localization coefficient $K$.
It provides matrices $\mathrm{X}, \mathrm{Y} $ satisfying
\[
 \mathrm{X}- \mathrm{Y} \le 0,\qquad
 \langle(\mathrm{X}-\mathrm{Y})e,e\rangle\le-cLd^{\theta-2},
 \qquad
 \|\mathrm{X}\|+\|\mathrm{Y}\|\le \mathrm{C}Ld^{\theta-2}.
\]

Put $A_i:=|\xi_i|^\gamma$. Uniform ellipticity implies
\[
 F(\mathrm{X})-F(\mathrm{Y})
 \le\mathscr P^+_{1,1+\delta}(\mathrm{X}-\mathrm{Y})
 =\operatorname{tr}(\mathrm{X}-\mathrm{Y})
 \le-\mathrm{c}Ld^{\theta-2}.
\]
The gradient bounds also give
\[
 A_i\asymp P^\gamma,\qquad
 |A_1-A_2|
 \le \mathrm{C}|\gamma|P^{\gamma-1}\sqrt K,
 \qquad
 |F(\mathrm{Y})|\le \mathrm{C}Ld^{\theta-2}.
\]
Using
\[
 A_1F(\mathrm{X})-A_2F(\mathrm{Y})
 =A_1\bigl(F(\mathrm{X})-F(\mathrm{Y})\bigr)+(A_1-A_2)F(\mathrm{Y})
\]
in \eqref{eq:rigorous-subtraction}, and estimating the quadratic
localization terms by ellipticity, we obtain
\begin{equation}\label{eq:FL-holder-contradiction}
 \begin{aligned}
 0\le{}&
 -\mathrm{c}P^\gamma Ld^{\theta-2}
 +\mathrm{C}|\gamma|P^{\gamma-1}\sqrt K\,Ld^{\theta-2}\\
 &+\mathrm{C} K(P^\gamma+1)+2.
 \end{aligned}
\end{equation}
We also used $K(\bar t-t_0)\ge-K$ and included this contribution in the $\mathrm{C} K$ term. The positive derivative of the time pole has been discarded.

The negative term dominates all the others when $L$ is large.
Indeed, the ratio of the second term to its magnitude is at most
$\frac{\mathrm{C}|\gamma|\sqrt{K}}{P}$, and the corresponding ratio for
$\mathrm{C}KP^\gamma$ is at most $\frac{\mathrm{C}}{L}$. Moreover, since $\theta=\frac{1}{2}$,
\begin{equation}\label{eq:FL-negative-domination}
 P^\gamma Ld^{\theta-2}
 =\theta^\gamma L^{1+\gamma}d^{-\frac{3+\gamma}{2}}
 \ge \mathrm{c}L^{1-\gamma_0}.
\end{equation}
Thus the remaining constant terms are also absorbed. This
contradicts \eqref{eq:FL-holder-contradiction} and proves the
spatial estimate for $t<0$. The estimate at $t=0$ follows by
continuity.

\medskip
\noindent
\emph{Step 2: Time estimate.}
Set $v(x,t):=w(x,t)+q\cdot x$. Then
\[
 v_t-|Dv|^\gamma F(D^2v)=f,
 \qquad \|v\|_\infty\le1+M,
\]
and the spatial estimate gives
\[
 |v(x,t)-v(y,t)|\le \mathrm{C}|x-y|^\theta
 \quad   \text{in }Q_{\frac{15}{16}}.
\]

Fix $x_0\in B_{\frac{7}{8}}$ and
$-(\frac{7}{8})^2\le s<t<0$. Let $R=\frac{1}{16}$. For
$0<\epsilon\le\epsilon_0<R$, define
\[
 A_\epsilon:= \mathrm{C}_A\epsilon^{\theta-4},
 \qquad
 a_0:=(4-\theta)(1+\gamma_0),
 \qquad
 B_\epsilon:= \mathrm{C}_B(1+\epsilon^{-a_0}),
\]
and consider the upper barrier
\[
 \Phi(x,\tau)
 :=v(x_0,s)+\mathrm{C}_A\epsilon^\theta
   +A_\epsilon|x-x_0|^4+B_\epsilon(\tau-s).
\]
Choose $ \mathrm{C}_A\ge1$ larger than the spatial H\"older constant.
For $r:=|x-x_0|\le\epsilon$, the term
$\mathrm{C}_A\epsilon^\theta$ controls the initial oscillation. For
$r\ge\epsilon$, we use
\[
 \epsilon^{\theta-4}r^4\ge r^\theta.
\]
Hence $v(x,s)\le\Phi(x,s)$ in $B_R(x_0)$.
By decreasing $\epsilon_0$, we also ensure that
$A_\epsilon R^4\ge2(1+M)$, so $\Phi\ge v$ on the lateral boundary.

For $0<r<R$,
\[
 |D\Phi|=4A_\epsilon r^3,
 \qquad
 \|D^2\Phi\|\le \mathrm{C}_nA_\epsilon r^2.
\]
Consequently,
\begin{equation}\label{eq:FL-time-barrier-diffusion}
 \bigl||D\Phi|^\gamma F(D^2\Phi)\bigr|
 \le \mathrm{C} A_\epsilon^{1+\gamma}r^{2+3\gamma}
 \le \mathrm{C}\epsilon^{-a_0}.
\end{equation}
We used $A_\epsilon\ge1$, $r<1$, and
$2+3\gamma\ge\frac{5}{4}$. Choosing $ \mathrm{C}_B$ sufficiently large gives
\[
 \Phi_t-|D\Phi|^\gamma F(D^2\Phi)>1
 \quad  \text{for }r>0.
\]
At $r=0$, Lemma~\ref{Section2:lemma1} rules out an upper
contact with $v$, since $\Phi_t=B_\epsilon>1\ge f$.
Thus a positive interior maximum of $v-\Phi$ is impossible.
This argument can be justified by adding a positive time pole
with pole time between $t$ and $0$, and then letting its
coefficient tend to zero. We conclude that
\[
 v(x_0,t)-v(x_0,s)
 \le \mathrm{C} \bigl(\epsilon^\theta
            +(t-s)\epsilon^{-a_0}\bigr).
\]
Applying the same argument to $-v$ and
$F^\sharp(\mathrm{N}):=-F(-\mathrm{N})$ gives the reverse inequality.
Since $v-w=q\cdot x$ is independent of time,
\[
 |w(x_0,t)-w(x_0,s)|
 \le \mathrm{C} \bigl(\epsilon^\theta
            +(t-s)\epsilon^{-a_0}\bigr).
\]

For sufficiently small $t-s$, choose
$\epsilon=(t-s)^{\frac{1}{\theta+a_0}}$. It follows that
\[
 |w(x_0,t)-w(x_0,s)|\le \mathrm{C}|t-s|^\sigma,
 \quad
 \sigma:=\frac{\theta}{\theta+(4-\theta)(1+\gamma_0)}
 \ge\frac{4}{39}>\frac{1}{10}.
\]
Boundedness handles larger time differences, and continuity
includes $t=0$. Finally,
\[
 \begin{aligned}
 |w(x,t)-w(y,s)|
 &\le |w(x,t)-w(y,t)|+|w(y,t)-w(y,s)|\\
 &\le \mathrm{C} \bigl(|x-y|^{\frac{1}{2}}+|t-s|^{\frac{1}{10}}\bigr),
 \end{aligned}
\]
as required.
\end{proof}

\subsection{Heat approximation}

\begin{lemma}\label{lem:recognition}
Let $v$ be continuous in an open parabolic cylinder. Assume the following
upper-contact properties at time-interior points:
\begin{enumerate}
\item If a $C^{2,1}$ function $\phi$ touches $v$ from above and $D\phi\ne0$,
then $\phi_t-\Delta\phi\le0$.
\item If $\psi(t)+A|x-x_0|^4$ touches $v$ from above at $(x_0,t_0)$,
where $A>0$ and $\psi$ is smooth, then $\psi'(t_0)\le0$.
\end{enumerate}
If the same properties hold for $-v$, then $v$ is caloric.
\end{lemma}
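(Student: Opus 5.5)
It suffices to show that $v$ is a viscosity subsolution of the heat equation; applying the same argument to $-v$ then gives the supersolution property. Since continuous viscosity solutions of $v_t-\Delta v=0$ are classical caloric functions, this proves the lemma. So let $\phi\in C^{2,1}$ touch $v$ from above at a time-interior point $(x_0,t_0)$; we must show $\phi_t-\Delta\phi\le0$ there. If $D\phi(x_0,t_0)\ne0$, this is hypothesis (1). It remains to treat $D\phi(x_0,t_0)=0$. Argue by contradiction and suppose
\[
 \phi_t(x_0,t_0)-\Delta\phi(x_0,t_0)=:\eta>0 .
\]
After translating and adding $|x-x_0|^4+(t-t_0)^2$, assume the contact is strict.

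The natural tool is a Jensen--Ishii type perturbation, as in Step~2 of Lemma~\ref{Section2:lemma1}: we move the contact either to a point with nonzero gradient or to a point where a quartic test applies. First, if $D^2\phi(x_0,t_0)$ has a positive eigenvalue direction, or more generally is not negative semidefinite, perturb by linear functions $\phi_p=\phi+p\cdot x$ with small $p$. For a strict maximum, $v-\phi_p$ has maximizers $(x_p,t_p)\to(x_0,t_0)$. If $D\phi(x_p,t_p)+p\ne0$ for a sequence $p\to0$, hypothesis (1) gives
\[
 \phi_t-\Delta\phi\le0
\]
at $(x_p,t_p)$, and continuity yields the contradiction. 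The bad case is that every maximizer has vanishing gradient for all small $p$. This is the standard scenario in which the gradient map $x\mapsto D\phi(x,t_p)$ covers a neighborhood of $0$. It forces a degenerate Hessian structure, and this is where the quartic hypothesis (2) must be used.

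To handle the degenerate case robustly, I will follow the classical argument of Juutinen--Lindqvist--Manfredi and Ohnuma--Sato for singular equations, in the form of Lemma~\ref{Section2:lemma1}. Since $\eta>0$ gives
\[
 \phi_t>\Delta\phi
\]
near $(x_0,t_0)$, replace $\phi$ by the test
\[
 \psi_\theta(t)+B|x-y|^4 ,
\]
built as follows. Taylor expand
\[
 \phi\le\phi(x_0,t_0)+\phi_t(t-t_0)+\tfrac12\langle D^2\phi(x-x_0),x-x_0\rangle+o(|x-x_0|^2+|t-t_0|).
\]
If $D^2\phi(x_0,t_0)\le0$ with $\operatorname{tr}D^2\phi=-\kappa\le0$, we would like to discard the quadratic term and obtain a touching from above by $\psi(t)+A|x-x_0|^4$ with
\[
 \psi'(t_0)=\phi_t(t_0)\ge\eta>0 .
\]
This contradicts (2). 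When the Hessian is nonpositive, the quadratic term is $\le0$, but the $o(|x-x_0|^2)$ error prevents a direct quartic domination. So I will instead run the doubling $\Phi_y=v-\psi_\theta(t)-B|x-y|^4$, with $\psi_\theta$ carrying the slope $\phi_t(t_0)-\eta/2$.

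Maximizers of $\Phi_y$ converge to $(x_0,t_0)$. At maximizers with $x_y\ne y$, the quartic has nonzero gradient, so hypothesis (1) gives
\[
 \psi_\theta'(t_y)\le\Delta\bigl(B|x-y|^4\bigr)=O(d^2)\to0 ,
\]
a contradiction. At maximizers with $x_y=y$, hypothesis (2) applies directly and gives $\psi_\theta'\le0$, again a contradiction since $\psi_\theta'\approx\phi_t>0$. The remaining task is the case where $D^2\phi(x_0,t_0)$ has a positive part. Then $\phi_t>0$ need not hold, and the quartic comparison fails.

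For that case I will perturb with $\phi+\frac{\rho}{2}|x-x_0|^2$-type modifications. I will choose linear shifts $p$ along eigendirections of positive eigenvalues of $D^2\phi$, so that the maximum of $v-\phi_p$ is forced to a point where $D\phi_p\ne0$. Near $x_0$ the gradient is $D^2\phi(x_0,t_0)(x-x_0)+p$, and it cannot vanish on the set where the maximizers lie unless $x-x_0$ solves a linear equation. Restricting $p$ to directions outside the range of $D^2\phi(x_0,t_0)$ keeps the gradient nonzero whenever that range is proper. I expect the main obstacle to be the full-rank indefinite case. There, a genuine covering argument is needed: for a.e.\ small $p$, the maximizer has a nonzero gradient, via a measure or degree argument on the gradient map. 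Alternatively, one can reduce to the convex-in-$x$ situation by the standard equivalence showing that test functions with $D\phi=0$ and $D^2\phi\ne0$ can be ignored for the heat equation.
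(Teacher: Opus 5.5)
Your reduction to contacts with $D\phi(x_0,t_0)=0$ is fine, but your treatment of that case has a genuine gap. \emph{Signs.} Since $\phi_t=\eta+\Delta\phi$ at the contact, the case $D^2\phi\le0$ gives $\phi_t\le\eta$, and $\phi_t\le0$ is possible. So your bound $\psi'(t_0)=\phi_t\ge\eta>0$ is false, and hypothesis (2) gives no contradiction there. It is the case $D^2\phi\ge0$ in which $\phi_t>0$. \emph{The doubling cannot start.} The maximizers of $\Phi_y=v-\psi_\theta(t)-B|x-y|^4$ converge to $(x_0,t_0)$ only if $v\le\psi_\theta(t)+B|x-x_0|^4$ near the contact. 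That is exactly the quartic domination you observed is unavailable. \emph{The bad case is not tied to a degenerate Hessian.} Take $v=t$ and $\phi=t+\frac{c}{2}|x-x_0|^2+(t-t_0)^2$ with $0<c<\frac1n$. Then $\phi_t-\Delta\phi=1-nc>0$, yet every maximizer of $v-\phi-p\cdot x$ is a critical point of $\phi+p\cdot x$, although $D^2\phi=c\Id_n$ is invertible. In fact every upper test for $v=t$ (and for $-v$) has zero gradient, so (1) holds vacuously and only (2) detects that $v$ is not caloric. Hence perturbing the contact to nonzero gradient cannot succeed on its own, and the indefinite case stays open. Your degree argument is only announced. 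The ``standard equivalence'' you invoke (Juutinen--Lindqvist--Manfredi/Ohnuma--Sato type) is essentially the content of the lemma, so it cannot be cited as a step.

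The missing idea, which the paper uses, is to exploit the bad case rather than avoid it, by sliding a nondegenerate quadratic--quartic test in space. Translate to $(0,0)$, reduce to smooth $\phi$, and put $b:=\phi_t(0,0)$. Choose $\varepsilon>0$ with $Q_\varepsilon:=D^2\phi(0,0)+\varepsilon\Id_n$ invertible and $b-\tr Q_\varepsilon>0$; Taylor's formula gives $\phi\le bt+C_\varepsilon t^2+\frac12\langle Q_\varepsilon x,x\rangle$ nearby. Set $\psi(t)=bt+(C_\varepsilon+\theta)t^2$ and $P(x)=\frac12\langle Q_\varepsilon x,x\rangle+\theta|x|^4$. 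Then the contact is strict, $\psi'-\Delta P>0$ nearby, and $DP(z)=0$ only at $z=0$. Maximize $v-\psi(t)-P(x-y)$ for small $y$. Hypothesis (1) excludes maximizers with $DP(x_y-y)\ne0$, so every maximizer has $x_y=y$. Consequently $m(y):=\max_t\bigl(v(y,t)-\psi(t)\bigr)$ satisfies $|m(x)-m(y)|\le P(x-y)\le C|x-y|^2$. Hence $m\equiv m(0)=0$, i.e.\ $v\le\psi(t)$ on a neighborhood, and the same inequality forces $P\ge0$, so $Q_\varepsilon\ge0$. Now $\psi(t)+A|x|^4$ touches $v$ from above at the origin, so (2) gives $b\le0$, and then $b-\tr Q_\varepsilon\le0$ is the contradiction. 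This handles every Hessian signature at once: a negative eigenvalue of $Q_\varepsilon$ is ruled out by $P\ge0$, and the semidefinite case by (2).
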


\begin{proof}

We prove the subsolution inequality. It is enough first to consider
smooth test functions. Indeed, a $C^{2,1}$ test can be made strict by
adding $|x-x_0|^4+(t-t_0)^2$, and then mollified locally. The local
maximizers of $v$ minus the mollified tests approach the original
contact point. The time derivatives and the first two spatial
derivatives of the tests converge there. Once the heat inequality
has been proved for smooth tests, passage to the limit therefore
gives it for all $C^{2,1}$ tests.

Let a smooth $\phi$ touch $v$ from above at an interior point,
translated to $(0,0)$, and suppose $D\phi(0,0)=0$. Normalize the
contact value to zero, and write
\[
 b=\phi_t(0,0), \quad \mathrm{Q}:=D^2\phi(0,0).
\]
Assume, for a contradiction, that $b-\tr \mathrm{Q} >0$. Choose
$\varepsilon>0$ so small that
\[
 \mathrm{Q}_\varepsilon:= \mathrm{Q} +\varepsilon \Id_n \quad\hbox{is invertible},
 \qquad b-\tr \mathrm{Q}_\varepsilon>0.
\]
Only finitely many positive values of $\varepsilon$ make
$\mathrm{Q}+\varepsilon \Id_n $ singular, so this choice is possible. Taylor's
formula and Young's inequality imply, after shrinking the
neighborhood, that
\[
 \phi(x,t)\le bt+ \mathrm{C}_\varepsilon t^2
                  +\frac{1}{2}\langle \mathrm{Q}_\varepsilon x,x\rangle.
\]
For completeness, the mixed Taylor terms are bounded by
$\frac{\varepsilon|x|^2}{4}+ \mathrm{C}_\varepsilon t^2$, while the purely spatial
remainder is bounded by $\frac{\varepsilon|x|^2}{4}$ on a sufficiently
small neighborhood. Fix $\theta>0$ and put
\[
 \psi(t)=bt+(\mathrm{C}_\varepsilon+\theta)t^2,
 \quad
 P(x)=\frac{1}{2}\langle \mathrm{Q}_\varepsilon x,x\rangle+\theta|x|^4.
\]
On a closed cylinder $ K:=\overline B_r\times[-s,s] $ contained in
the original domain,
\[
 v(x,t)-\psi(t)-P(x)\le-\theta(t^2+|x|^4),
\]
and equality holds at $(0,0)$. The maximum is therefore strict.
Choose $r,s$ still smaller, if necessary, so that
\[
 \psi'(t)-\Delta P(z)>0
 \quad\hbox{for } |t|\le s,\ |z|\le2r,
 \quad
 DP(z)=0,\ |z|\le2r\ \Longrightarrow\ z=0.
\]
The first assertion follows from $ b-\tr \mathrm{Q}_\varepsilon > 0$.
For the second, note that
$ DP(z)=(\mathrm{Q}_\varepsilon+4\theta|z|^2\Id_n)z $ and use the
invertibility of $ \mathrm{Q}_\varepsilon $.

For $y$ near zero, maximize
\[
 v(x,t)-\psi(t)-P(x-y)
\]
on $K$. Uniform convergence of $P(\cdot-y)$ to $P$, together
with the strict maximum just obtained, shows that every maximizing
point $(x_y,t_y)$ tends to $(0,0)$ as $y\to0$. In particular all
such maxima are interior when $|y|$ is sufficiently small. At a
maximizer, a constant translate of $\psi(t)+P(x-y)$ is an upper
test. If $DP(x_y-y)\ne0$, property (1) gives
\[
 \psi'(t_y)-\Delta P(x_y-y)\le0,
\]
contrary to the preceding strict inequality. Hence
$DP(x_y-y)=0$ and therefore $x_y=y$.

Choose $\rho>0$ so that these conclusions hold for every
$y\in B_\rho$. Set
\[
 m(y)=\max_{|t|\le s}\{v(y,t)-\psi(t)\}.
\]
Since every maximum of the shifted functional occurs at $x=y$,
its maximum value is precisely $m(y)$. Thus, for
$x,y\in B_\rho$,
\[
 m(x)-m(y)\le P(x-y).
\]
Interchanging $x,y$ and using that $P$ is even gives
\[
 |m(x)-m(y)|\le P(x-y)\le \mathrm{C} |x-y|^2.
\]
In particular $P(x-y)\ge0$. Subdivision of the segment from $x$
to $y$ into $N$ equal parts gives
$|m(x)-m(y)|\le \frac{\mathrm{C}|x-y|^2}{N} $, hence $ m $ is constant.
The original strict contact gives $m(0)=0$, so
\[
 v(x,t)\le\psi(t), \quad   v(0,0)=\psi(0)=0,
 \quad x\in B_\rho, \ |t|\le s.
\]
Property (2), applied to $\psi(t)+A|x|^4$ for any $A>0$,
now implies $b=\psi'(0)\le0$. On the other hand,
$P(z)\ge0$ for all small $z$ implies $ \mathrm{Q}_\varepsilon\ge0$,
by dividing $P(he)\ge0$ by $h^2$ and letting $h \to 0$.
It follows that $b-\tr \mathrm{Q}_\varepsilon \le 0 $, the desired
contradiction. Therefore $\phi_t-\Delta\phi\le0$ also at
zero-gradient contacts.

Applying this argument to $-v$ gives the supersolution inequality.
Thus $v$ is a viscosity solution of the heat equation. The standard
equivalence between viscosity and classical solutions for the heat
equation shows that $v$ is caloric.
\end{proof}

\begin{lemma}[Heat approximation]\label{lem:FL-heat}
Fix $ M>0 $ and $ \eta>0 $. There exists $\varepsilon_h=\varepsilon_h(n,M,\eta)>0$ such that the following holds. Let $|q|\le M$, let $F$ be $(1,1+\delta)$-elliptic with $F(0)=0$, and let $w$ be
a continuous viscosity solution of
\[
w_t-|q+Dw|^\gamma F(D^2w)=f  \quad  \text{in } Q_1.
\]
Assume
\[
\norm w_{L^\infty(Q_1)}\le1,
\quad  \gamma > -1,  \quad
|\gamma|+\delta+\norm f_{L^\infty(Q_1)}\le\varepsilon_h.
\]
Then there is a caloric function $H$ in $Q_{\frac{2}{3}}$ such that
\begin{equation}\label{eq:FL-heat-close}
\norm{w-H}_{L^\infty(Q_{\frac{1}{2}})}\le\eta,
\quad \norm H_{C^{2,1}(Q_{\frac{1}{2}})}\le \mathrm{C}(n).
\end{equation}
\end{lemma}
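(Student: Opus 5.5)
We argue by contradiction and compactness. If the lemma fails for some $M,\eta$, there are sequences $q_j$ with $|q_j|\le M$, exponents $\gamma_j$, operators $F_j$ that are $(1,1+\delta_j)$-elliptic with $F_j(0)=0$, sources $f_j$, and solutions $w_j$ with $\|w_j\|_{L^\infty(Q_1)}\le1$ and $|\gamma_j|+\delta_j+\|f_j\|_\infty\to0$. For every caloric $H$ in $Q_{\frac23}$ with $\|H\|_{C^{2,1}(Q_{\frac12})}\le \mathrm{C}(n)$ these satisfy $\|w_j-H\|_{L^\infty(Q_{\frac12})}>\eta$. For large $j$ we have $|\gamma_j|\le\frac14$, $\delta_j\le\frac12$ and $\|f_j\|_\infty\le1$. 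Lemma~\ref{lem:FL-equicont} then gives a uniform modulus \eqref{eq:FL-equicont} on $Q_{\frac78}$. By Arzel\`a--Ascoli, after passing to a subsequence, $w_j\to w$ uniformly on $Q_{\frac78}$ and $q_j\to q$. We also have $\|w\|_\infty\le1$. It suffices to show that $w$ is caloric in the open cylinder $B_{\frac78}\times(-\frac{49}{64},0)$. Interior estimates for the heat equation with $\|w\|_\infty\le1$ then give $\|w\|_{C^{2,1}(Q_{\frac12})}\le \mathrm{C}(n)$, and taking $H=w$ contradicts the choice of the sequence for large $j$.

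To identify the limit, set $v:=w+q\cdot x$, which is the uniform limit of $v_j:=w_j+q_j\cdot x$. These functions solve $(v_j)_t-|Dv_j|^{\gamma_j}F_j(D^2v_j)=f_j$. We verify the hypotheses of Lemma~\ref{lem:recognition} for $v$, and then $w=v-q\cdot x$ is caloric as well.

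\emph{Property (1).} Let $\phi\in C^{2,1}$ touch $v$ from above at $z_0$ with $D\phi(z_0)\ne0$. Make the contact strict by adding $|x-x_0|^4+(t-t_0)^2$. Uniform convergence then yields local maxima $z_j\to z_0$ of $v_j-\phi$. For large $j$ we have $D\phi(z_j)\ne0$, so condition (i) of the definition gives
\[
\phi_t(z_j)-|D\phi(z_j)|^{\gamma_j}F_j(D^2\phi(z_j))\le f_j(z_j).
\]
The gradient is bounded away from zero near $z_0$, so $|D\phi(z_j)|^{\gamma_j}\to1$. Lemma~\ref{Section2.2:lemma1} (equivalently \eqref{eq:FL-close-trace}) gives $|F_j(D^2\phi)-\Delta\phi|\le \mathrm{C}_n\delta_j\|D^2\phi\|\to0$. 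Passing to the limit yields $\phi_t-\Delta\phi\le0$ at $z_0$.

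\emph{Property (2).} Let $\psi(t)+A|x-x_0|^4$ touch $v$ from above at $(x_0,t_0)$. Replace $\psi$ by $\psi+\theta(t-t_0)^2$ and $A$ by $A+\theta$, which gives a strict maximum. The perturbed functionals $v_j-\psi_\theta-(A+\theta)|x-y|^4$ then have maxima at points $(x_j,t_j)\to(x_0,t_0)$, with the center $y=y_j$ chosen suitably. The difficulty is that $x_j$ need not equal the center, so Lemma~\ref{Section2:lemma1} cannot be invoked directly. I would take the center $y=x_0$ and split into two cases.
\begin{itemize}
\item If $x_j=x_0$, Lemma~\ref{Section2:lemma1} applies, since $\gamma_j>-\frac23$. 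It gives $\psi_\theta'(t_j)\le f_j(x_j,t_j)$.
\item If $x_j\ne x_0$, the test has nonzero gradient $4Bd_j^3$ and Hessian of size $12Bd_j^2$, where $d_j=|x_j-x_0|$ and $B=A+\theta$. The subsolution inequality, as in \eqref{Section2:eq1}, gives $\psi_\theta'(t_j)-f_j\le \mathrm{C}(1+\delta_j)(4Bd_j^3)^{\gamma_j}Bd_j^2$. Since $|\gamma_j|\le\frac14$, this is at most $\mathrm{C}B^{1+\gamma_j}d_j^{2+3\gamma_j}\le \mathrm{C}d_j^{5/4}\to0$.
\end{itemize}
In both cases, letting $j\to\infty$ and then $\theta\to0$ gives $\psi'(t_0)\le0$.

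The same argument applied to $-v$, with the operators $F_j^\sharp$ and sources $-f_j$, gives the supersolution properties. Lemma~\ref{lem:recognition} then shows that $v$, and hence $w$, is caloric.

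The main obstacle is the zero-gradient contacts: $|\xi|^{\gamma_j}\to1$ fails uniformly near $\xi=0$, so standard stability breaks down. This is exactly why the quartic-test formulation of Lemma~\ref{lem:recognition} is used. The estimate $d^{2+3\gamma}$, uniform for $|\gamma|\le\frac14$, is what makes the contacts off the center harmless in the limit.
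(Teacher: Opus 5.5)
Your proof is correct and follows the paper's argument essentially step for step. It uses contradiction plus compactness from Lemma~\ref{lem:FL-equicont}, passes to $v_j=w_j+q_j\cdot x$, and verifies both hypotheses of Lemma~\ref{lem:recognition} with the quartic test centered at $x_0$ and the same case split, $x_j=x_0$ via Lemma~\ref{Section2:lemma1} and $x_j\neq x_0$ via $d_j^{2+3\gamma_j}\le d_j^{5/4}$ (your final $\theta\to0$ step is unnecessary, since $\psi_\theta'(t_0)=\psi'(t_0)$).
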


\begin{proof}
Suppose that the conclusion fails for some fixed $M$ and $\eta$.
There then exist solutions $w_j$, shifts $|q_j|\le M$, operators
$F_j$, exponents $\gamma_j$, and sources $f_j$, satisfying
\[
 \|w_j\|_\infty\le1,
 \quad
 |\gamma_j|+\delta_j+\|f_j\|_\infty\longrightarrow0,
\]
for which no $H$ has the stated properties. After discarding
finitely many indices we have $|\gamma_j|\le\frac{1}{4}$ and
$\delta_j\le\frac{1}{2}$. Lemma~\ref{lem:FL-equicont} and the
Arzel\`a--Ascoli theorem give, after passage to a subsequence,
\[
 q_j\longrightarrow q_\infty,
 \quad
 w_j\longrightarrow w_\infty
 \quad\hbox{uniformly on }\overline Q_{\frac{3}{4}}.
\]
Set $v_j(x,t)=w_j(x,t)+q_j\cdot x$ and
$v_\infty(x,t)=w_\infty(x,t)+q_\infty\cdot x$. Then
\[
 (v_j)_t-|Dv_j|^{\gamma_j}F_j(D^2v_j)=f_j.
\]
We verify the two upper-contact properties of
Lemma~\ref{lem:recognition} for $v_\infty$ in the
time-interior of $Q_{\frac{3}{4}}$.

First let $\phi$ be an upper test with $D\phi(x_0,t_0)\ne0$.
Make the contact strict by adding a positive multiple of
$|x-x_0|^4+(t-t_0)^2$. Uniform convergence produces upper
contacts for $v_j$ at points $(x_j,t_j)\to(x_0,t_0)$, with
test functions differing from $\phi$ only by constants. Their
gradients stay in a compact annulus. Hence
\[
 |D\phi(x_j,t_j)|^{\gamma_j}\longrightarrow1,
 \qquad
 F_j(D^2\phi(x_j,t_j))
       \longrightarrow\Delta\phi(x_0,t_0),
\]
where the second convergence uses
\eqref{eq:FL-close-trace}. Passing to the limit in the
subsolution inequality gives $\phi_t-\Delta\phi\le0$.

Next suppose that $\psi(t)+A|x-x_0|^4$ is an upper test
for $v_\infty$ at $(x_0,t_0)$, with $A>0$ and $\psi$
smooth. Fix $\theta>0$, put $B:=A+\theta$, and use the strict
upper test
\[
 \Phi(x,t)=\psi(t)+\theta(t-t_0)^2+B|x-x_0|^4.
\]
As before, it induces upper contacts for $v_j$ at
$(x_j,t_j)\to(x_0,t_0)$. Put $r_j=|x_j-x_0|$.
If $r_j>0$, the nonzero-gradient subsolution inequality and
uniform ellipticity give
\[
 \psi'(t_j)+2\theta(t_j-t_0)
 \le \|f_j\|_\infty
       +\mathrm{C}_{n,B}r_j^{2+3\gamma_j}
 \le \|f_j\|_\infty+ \mathrm{C}_{n,B}r_j^{\frac{5}{4}},
\]
for all sufficiently large $j$. Indeed,
$|D\Phi|=4Br_j^3$, $\|D^2\Phi\|\le \mathrm{C}_nBr_j^2$,
and $2+3\gamma_j\ge\frac{5}{4}$. If $r_j=0$,
Lemma~\ref{Section2:lemma1}, applied at $(x_0,t_j)$,
instead gives
\[
 \psi'(t_j)+2\theta(t_j-t_0)\le f_j(x_0,t_j).
\]
Both cases yield $\psi'(t_0)\le0$ after passage to the limit.
This argument is uniform even when the contact gradients tend to
zero arbitrarily rapidly.

Replacing $v_j$ by $-v_j$, $f_j$ by $-f_j$, and $F_j$ by
$F_j^\sharp(\mathrm{N})=-F_j(-\mathrm{N})$ verifies the same two properties for
$-v_\infty$. Lemma~\ref{lem:recognition} shows that
$v_\infty$, and therefore $w_\infty$, is caloric. Set
$H:=w_\infty$ on $Q_{\frac{2}{3}}$. Since $\|H\|_\infty\le1$,
the interior estimates for the heat equation give
\[
 \|H\|_{C^{2,1}(Q_{\frac{1}{2}})}\le \mathrm{C}(n).
\]
Uniform convergence gives
$\|w_j-H\|_{L^\infty(Q_{\frac{1}{2}})}\le\eta$ for all sufficiently
large $j$, a contradiction.
\end{proof}

\section{One-step intrinsic improvement of flatness}\label{Section 4}

We use heat approximation to improve the affine error on a smaller
intrinsic cylinder. The choice of exponents below also controls the
source under iteration and the final time regularity.

Fix the target exponent $ \alpha\in(0,1) $ and put
\begin{equation}\label{eq:FL-beta}
 \beta:=\frac{1+\alpha}{2}.
\end{equation}
Choose $\gamma_{\rm FL}>0$ so small that
\begin{equation}\label{eq:FL-beta-gamma}
 0<\gamma_{\rm FL}\le\frac{1}{4},
 \qquad \beta(1+\gamma_{\rm FL})<1,
\end{equation}
and
\begin{equation}\label{eq:FL-time-conversion}
 \frac{\beta}{2+\beta\gamma_{\rm FL}}\ge\frac{\alpha}{2},
 \qquad
 \frac{1+\beta}{2+\beta\gamma_{\rm FL}}\ge\frac{1+\alpha}{2}.
\end{equation}
For example, choose a positive $\gamma_{\rm FL}$ strictly smaller than
\[
 \min\left\{\frac{1}{4},\frac{1-\beta}{\beta},
 \frac{2(\beta-\alpha)}{\alpha\beta},
 \frac{2(\beta-\alpha)}{(1+\alpha)\beta}\right\}.
\]
All four bounds are positive because $\alpha<\beta<1$. We work throughout
with $-\gamma_{\rm FL}\le\gamma\le\gamma_{\rm FL}$.

\begin{lemma}[One-step intrinsic flatness]\label{lem:FL-onestep}
Fix $M>0$ and $\beta= \frac{1+\alpha}{2} $. Choose $\gamma_{\rm FL}\in(0,\frac{1}{4}]$ satisfying \eqref{eq:FL-beta-gamma}. There exist $\rho=\rho(n,\beta,\gamma_{\rm FL})\in(0,\frac{1}{8})$ and $ \mathrm{C}_0=\mathrm{C}_0(n)$ such that for every fixed $\eta_0\in(0,1]$ there is $\varepsilon_1=\varepsilon_1(n,M,\beta,\gamma_{\rm FL},\eta_0)>0$ with the following property. Let $F$ be $(1,1+\delta)$-elliptic with $F(0)=0$, and let $w$
be a continuous viscosity solution of
\[
w_t-|q+Dw|^\gamma F(D^2w)=f \quad\text{in }Q_1,
\quad |q|\le M,\quad\norm w_{L^\infty(Q_1)}\le\eta_0,
\]
with $\gamma > -1 $ and $|\gamma|+\delta+\norm f_\infty\le\varepsilon_1$. Then an affine function $\ell(x):=a+b\cdot x$ satisfies
\begin{equation}\label{eq:FL-one-flat}
\sup_{Q_\rho^{\rho^\beta}}|w-\ell|\le\eta_0\rho^{1+\beta},
\qquad |a|+|b|\le \mathrm{C}_0\eta_0.
\end{equation}
The scale $\rho$ does not depend on $M$ or $\eta_0$.
\end{lemma}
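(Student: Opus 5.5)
The plan is to normalize $w$ by $\eta_0$, apply the heat approximation Lemma~\ref{lem:FL-heat}, and take $\ell$ to be the first-order Taylor polynomial of the caloric approximant at the origin. The Taylor remainder on the intrinsic cylinder then gives the claimed flatness, provided $\rho$ is chosen small (depending only on $n,\beta,\gamma_{\rm FL}$) and $\eta$ in Lemma~\ref{lem:FL-heat} is chosen small relative to $\rho^{1+\beta}$.

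\emph{Normalization.} Set $\tilde w:=w/\eta_0$ and $\tilde q:=q/\eta_0$. Since $|\tilde q|\le M/\eta_0$ depends on $\eta_0$, it is cleaner to proceed as follows. Write $\tilde F(\mathrm{M}):=\eta_0^{-1}F(\eta_0\mathrm{M})$, which is again $(1,1+\delta)$-elliptic with $\tilde F(0)=0$. Then
\[
\tilde w_t-\eta_0^{\gamma}|\tilde q+D\tilde w|^\gamma\tilde F(D^2\tilde w)=f/\eta_0 .
\]
To avoid the factor $\eta_0^\gamma$ and the large shift, I will instead work with $\tilde w$ solving
\[
\tilde w_t-|q+\eta_0D\tilde w|^\gamma\tilde F(D^2\tilde w)=f/\eta_0 .
\]
This is a shifted-type equation whose gradient weight has argument $q+\eta_0 D\tilde w$. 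The compactness and limit-identification arguments of Lemmas~\ref{lem:FL-equicont}--\ref{lem:FL-heat} go through verbatim for this weight, since $\eta_0\le1$ and $|q|\le M$: the weight is $|\xi|^\gamma$ evaluated at an affine image of the test gradient, and Lemma~\ref{Section2:lemma1} handles the zero-total-gradient contacts exactly as before. Alternatively, one can run the contradiction argument of Lemma~\ref{lem:FL-heat} directly on sequences with $\eta_{0,j}$ fixed.

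For simplicity, I will run a direct compactness argument. Suppose there are sequences with $|\gamma_j|+\delta_j+\|f_j\|_\infty/\eta_0\to0$. The functions $w_j$ are equicontinuous by Lemma~\ref{lem:FL-equicont}, applicable since $\|w_j\|\le\eta_0\le1$ and $\|f_j\|\le1$. The limit of $w_j/\eta_0$ is caloric by the argument of Lemma~\ref{lem:FL-heat}. This yields a caloric $H$ with $\|H\|_\infty\le1$, $\|H\|_{C^{2,1}(Q_{1/2})}\le \mathrm{C}(n)$, and $\|w/\eta_0-H\|_{L^\infty(Q_{1/2})}\le\eta$, once $\varepsilon_1=\varepsilon_1(n,M,\eta_0,\eta)$ is small.

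\emph{Choice of $\ell$ and $\rho$.} Put $\ell(x):=\eta_0\bigl(H(0,0)+DH(0,0)\cdot x\bigr)$. Then $|a|+|b|\le \mathrm{C}_0(n)\eta_0$. The intrinsic cylinder $Q_\rho^{\rho^\beta}=B_\rho\times(-\rho^{2-\beta\gamma},0]$ has time length $\rho^{2-\beta\gamma}\le\rho^{2-\beta\gamma_{\rm FL}}\le\rho^{7/4}$, which stays below $\frac14$ and inside $Q_{1/2}$ for $\rho<\frac18$. By Taylor's formula for $H$,
\[
|H(x,t)-H(0,0)-DH(0,0)\cdot x|\le \mathrm{C}(n)\bigl(|x|^2+|t|\bigr)\le \mathrm{C}(n)\bigl(\rho^2+\rho^{2-\beta\gamma_{\rm FL}}\bigr).
\]
Since $2-\beta\gamma_{\rm FL}>1+\beta$, which is equivalent to $\beta(1+\gamma_{\rm FL})<1$ by \eqref{eq:FL-beta-gamma}, we may choose $\rho=\rho(n,\beta,\gamma_{\rm FL})$ so small that $\mathrm{C}(n)\bigl(\rho^2+\rho^{2-\beta\gamma_{\rm FL}}\bigr)\le\frac12\rho^{1+\beta}$. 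Then choose $\eta:=\frac12\rho^{1+\beta}$, which fixes $\varepsilon_1$. Combining the two estimates gives
\[
\sup_{Q_\rho^{\rho^\beta}}|w-\ell|\le\eta_0(\eta+\tfrac12\rho^{1+\beta})=\eta_0\rho^{1+\beta},
\]
and $\rho$ is independent of $M$ and $\eta_0$.

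\emph{Main obstacle.} The delicate point is the normalization step. Dividing by $\eta_0$ distorts either the shift or the gradient weight, so Lemma~\ref{lem:FL-heat} cannot be quoted literally. I expect the fix is to run its contradiction proof with the weight $|q+\eta_0 Dw|^\gamma$. Equicontinuity comes from Lemma~\ref{lem:FL-equicont} applied to the unscaled $w$. In the limit identification, nonzero-gradient contacts give weights tending to $1$. Critical contacts, where $q_j+\eta_0D\phi=0$, are handled by the quartic argument together with the modified Lemma~\ref{lem:recognition}, where condition (1) is imposed only when $q_\infty+\eta_0D\phi\ne0$; that lemma's proof only needs the gradient condition to fail at isolated spatial points, so it adapts after shifting $P$ appropriately.
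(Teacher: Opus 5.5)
Your argument is correct and is essentially the paper's proof: heat approximation via Lemma~\ref{lem:FL-heat}, then the first-order Taylor polynomial of the caloric approximant, with $\rho$ fixed first using $2-\beta\gamma_{\rm FL}>1+\beta$ and the approximation tolerance then taken proportional to $\eta_0\rho^{1+\beta}$ (you should also require $\varepsilon_1\le\gamma_{\rm FL}$, as the paper does, so that $\rho^{2-\beta\gamma}\le\rho^{2-\beta\gamma_{\rm FL}}$ for the given $w$). The normalization ``obstacle'' is not real: the paper applies Lemma~\ref{lem:FL-heat} to $w$ itself, which is legitimate because $\|w\|_\infty\le\eta_0\le1$ and $\varepsilon_1$ may depend on $\eta_0$. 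It uses tolerance $\eta_0\rho^{1+\beta}/8$ and obtains the $\eta_0$-proportional bounds on $H(0,0)$, $DH(0,0)$ and the Taylor remainder from $\|H\|_{L^\infty(Q_{1/2})}\le 2\eta_0$ and linearity of the heat interior estimates, so no modified gradient weight or modified recognition lemma is needed.
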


\begin{proof}
Put $\kappa=1-\beta(1+\gamma_{\rm FL})>0$. First choose $\rho$ so that $Q_\rho^{\rho^\beta}\subset Q_{\frac{1}{4}}$ for every $|\gamma|\le\gamma_{\rm FL}$, and
\[
\mathrm{C}_H(\rho^2+\rho^{2-\beta\gamma_{\rm FL}})\le\frac{1}{2}\rho^{1+\beta}.
\]
This is possible since $2-\beta\gamma_{\rm FL}=1+\beta+\kappa$. Next fix $\eta_0$, set $\eta_{\rm app}:=\frac{\eta_0\rho^{1+\beta}}{8}$, and choose $\varepsilon_1\le\gamma_{\rm FL}$ below the threshold in Lemma~\ref{lem:FL-heat} for this tolerance. That lemma supplies caloric $H$ with $\norm{w-H}_{L^\infty(Q_{\frac{1}{2}})}\le\eta_{\rm app}$ and $\norm H_{L^\infty(Q_{\frac{1}{2}})}\le2\eta_0$. Heat interior estimates give
\[
|H(0,0)|+|DH(0,0)|+\norm{D^2H}_{L^\infty(Q_{\frac{1}{4}})}
+\norm{H_t}_{L^\infty(Q_{\frac{1}{4}})}\le \mathrm{C}_H\eta_0.
\]
Taking $\ell(x):=H(0,0)+DH(0,0)\cdot x$ and using Taylor's theorem yields
\[
\sup_{Q_\rho^{\rho^\beta}}|H-\ell|
\le \mathrm{C}_H\eta_0(\rho^2+\rho^{2-\beta\gamma})
\le\frac{1}{2}\eta_0\rho^{1+\beta}.
\]
Adding the approximation error proves \eqref{eq:FL-one-flat}; the
derivative estimate bounds $ a,b $. In particular, $\rho$ is fixed
before $\eta_0$ is chosen.
\end{proof}

\section{The critical iteration and the stopping regime}\label{Section 5}

Set
\[
 r_k:=\rho^k,
 \qquad
 \mu_k:=r_k^\beta.
\]
We work first with a normalized continuous viscosity solution of
\eqref{Intro:maineq1} in $Q_1$ satisfying
\begin{equation}\label{eq:FL-normalized}
 u(0,0)=0,
 \qquad
 \osc_{Q_1}u\le\eta,
 \qquad
 |\gamma|+\delta+\|f\|_{L^\infty(Q_1)}\le\varepsilon_0,
\end{equation}
where $\eta$ and $\varepsilon_0$ are chosen in the proof of
Proposition~\ref{prop:FL-pointwise}. Set $K_*:=3$ and
\[
K^*:=\rho^{-\beta}(K_*+ \mathrm{C}_0).
\]
Both $K_*$ and $ K^* $ are independent of $ \eta $.

\begin{proposition}[Critical iteration]\label{prop:FL-iteration}
Start with $L_0\equiv0$.  The affine functions
\[
 L_k(x)=a_k+l_k\cdot x
\]
are constructed step by step as follows.  Whenever the level-$k$ flatness
estimate \eqref{eq:FL-flat-k} holds and the level-$k$ critical condition
\begin{equation}\label{eq:FL-critical}
 |l_k|\le K_*\mu_k
\end{equation}
is satisfied, one can construct $L_{k+1}$ so that
\begin{equation}\label{eq:FL-flat-k}
 \|u-L_k\|_{L^\infty(Q_{r_k}^{\mu_k})}
 \le \eta r_k\mu_k
 =\eta r_k^{1+\beta}
\end{equation}
also holds with $k+1$ in place of $k$, and
\begin{equation}\label{eq:FL-increments}
 |a_{k+1}-a_k|
 \le \mathrm{C}_0\eta r_k\mu_k,
 \quad
 |l_{k+1}-l_k|
 \le \mathrm{C}_0\eta\mu_k.
\end{equation}
The level-zero flatness follows from \eqref{eq:FL-normalized}.  In
particular, if $k$ is the first level where \eqref{eq:FL-critical} fails,
the function $L_k$ has already been constructed and its flatness estimate
remains valid.
\end{proposition}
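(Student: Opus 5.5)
The plan is an induction on $k$ in which each step is a rescaling followed by Lemma~\ref{lem:FL-onestep}. Assume \eqref{eq:FL-flat-k} at level $k$ and the critical condition \eqref{eq:FL-critical}. Apply the intrinsic scaling of Section~\ref{Section 2} with $r=r_k$, $\mu=\mu_k=r_k^\beta$ and $L=L_k$:
\[
 w(x,t):=\frac{u(r_kx,r_k^2\mu_k^{-\gamma}t)-L_k(r_kx)}{r_k\mu_k},
 \qquad q:=\frac{l_k}{\mu_k}.
\]
Then $w$ solves
\[
 w_t-|q+Dw|^\gamma F_{r_k,\mu_k}(D^2w)=g
 \quad\text{in }Q_1,
 \qquad g(x,t)=r_k\mu_k^{-(1+\gamma)}f(r_kx,r_k^2\mu_k^{-\gamma}t).
\]
The hypotheses of Lemma~\ref{lem:FL-onestep} are checked as follows.
\begin{itemize}
\item The critical condition gives $|q|\le K_*=3$, so we may take $M=3$.
\item The flatness \eqref{eq:FL-flat-k} gives $\|w\|_{L^\infty(Q_1)}\le\eta$, so we take $\eta_0=\eta$.
\item By Lemma~\ref{Section2.2:lemma1}, $F_{r_k,\mu_k}$ is still $(1,1+\delta)$-elliptic with $F_{r_k,\mu_k}(0)=0$.
\item For the source, $\|g\|_\infty\le r_k^{1-\beta(1+\gamma)}\|f\|_\infty\le\|f\|_\infty$, because $1-\beta(1+\gamma)\ge1-\beta(1+\gamma_{\rm FL})=\kappa>0$ by \eqref{eq:FL-beta-gamma} and $r_k\le1$.
\end{itemize}
Hence $|\gamma|+\delta+\|g\|_\infty\le\varepsilon_0$. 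Provided $\varepsilon_0\le\varepsilon_1(n,3,\beta,\gamma_{\rm FL},\eta)$, Lemma~\ref{lem:FL-onestep} yields $\ell=a+b\cdot x$ with
\[
 \sup_{Q_\rho^{\rho^\beta}}|w-\ell|\le\eta\rho^{1+\beta},
 \qquad |a|+|b|\le \mathrm{C}_0\eta.
\]
This fixes the smallness order: $\eta$ first, then $\varepsilon_0$. Since $\rho$ does not depend on $\eta$ or $M$, the scales $r_k$ are fixed in advance.

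Next, undo the scaling by defining
\[
 L_{k+1}(x):=L_k(x)+r_k\mu_k\,\ell\!\left(\frac{x}{r_k}\right),
 \qquad
 a_{k+1}=a_k+r_k\mu_k a,
 \qquad
 l_{k+1}=l_k+\mu_k b.
\]
This gives the increment bounds \eqref{eq:FL-increments} immediately.

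The main point is to check that the rescaled intrinsic cylinder $Q_\rho^{\rho^\beta}$ maps exactly onto $Q_{r_{k+1}}^{\mu_{k+1}}$. The spatial radius is $r_k\rho=r_{k+1}$. The time length is $r_k^2\mu_k^{-\gamma}\cdot\rho^2(\rho^\beta)^{-\gamma}=r_{k+1}^2(r_k\rho)^{-\beta\gamma}=r_{k+1}^2\mu_{k+1}^{-\gamma}$, because $\mu_k\rho^\beta=\mu_{k+1}$. Therefore
\[
 \|u-L_{k+1}\|_{L^\infty(Q_{r_{k+1}}^{\mu_{k+1}})}
 =r_k\mu_k\sup_{Q_\rho^{\rho^\beta}}|w-\ell|
 \le\eta r_k\mu_k\rho^{1+\beta}
 =\eta r_{k+1}\mu_{k+1},
\]
which is \eqref{eq:FL-flat-k} at level $k+1$. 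This is where the compatibility $\mu_k=r_k^\beta$ with the intrinsic cylinder in Lemma~\ref{lem:FL-onestep} is used; I expect it to be the only delicate bookkeeping.

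The base case $k=0$ has $L_0\equiv0$, $r_0=\mu_0=1$ and $Q_1^1=Q_1$. From $u(0,0)=0$ and $\osc_{Q_1}u\le\eta$ we get $|u|\le\eta$ on $Q_1$, which is \eqref{eq:FL-flat-k} at level zero. Also $|l_0|=0\le K_*$, so the critical condition holds.

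The final assertion follows from the structure of the induction. Each construction of $L_{k+1}$ needs only level-$k$ data, and it produces the level-$(k+1)$ flatness before \eqref{eq:FL-critical} is tested at level $k+1$. So at the first failing level $k$, both $L_k$ and its flatness estimate are already available.

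One further point to verify is that the bounds required of $\varepsilon_0$ do not depend on $k$. This holds because $M=3$, $\eta_0=\eta$ and $\rho$ are all fixed, and the source bound improves rather than deteriorates under scaling. The constant $K^*$ is not needed in this step; it serves only later to bound $|l_k|/\mu_k\le K^*$ at the stopping scale.
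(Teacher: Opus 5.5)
Your proposal is correct and follows essentially the same route as the paper. You rescale by $r_k$, $\mu_k=r_k^\beta$ and $L_k$, check $|q_k|\le K_*$, $\|w_k\|_\infty\le\eta$ and $r_k\mu_k^{-(1+\gamma)}=r_k^{1-\beta(1+\gamma)}\le1$, apply Lemma~\ref{lem:FL-onestep} with $M=3$ and $\eta_0=\eta$, and scale back using $r_k^2\mu_k^{-\gamma}\rho^2(\rho^\beta)^{-\gamma}=r_{k+1}^2\mu_{k+1}^{-\gamma}$. Your explicit base case, the order of choices ($\eta$ before $\varepsilon_0$), and the $k$-independence of the thresholds spell out details that the paper's proof leaves implicit.
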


\begin{proof}
Assume the construction has been made at level $k$ and define
\[
 w_k(x,t)
 :=\frac{
 u(r_kx,r_k^2\mu_k^{-\gamma}t)-L_k(r_kx)
 }{r_k\mu_k}.
\]
Then $\|w_k\|_\infty\le\eta$.  By
Lemma~\ref{Section2.2:lemma1}, $w_k$ satisfies
\[
 (w_k)_t-|q_k+Dw_k|^\gamma F_k(D^2w_k)=f_k,
 \quad
 q_k:=\frac{l_k}{\mu_k},
\]
where $F_k$ has the same ellipticity aperture as $F$, and
\begin{equation}\label{eq:FL-fk}
 f_k(x,t)
 :=r_k\mu_k^{-(1+\gamma)}
 f(r_kx,r_k^2\mu_k^{-\gamma}t).
\end{equation}
Since
\[
 r_k\mu_k^{-(1+\gamma)}
 =r_k^{1-\beta(1+\gamma)}\le1
\]
by \eqref{eq:FL-beta-gamma}, we have
$\|f_k\|_\infty\le\varepsilon_0$.  If
$|q_k|\le K_*$, Lemma~\ref{lem:FL-onestep}, applied with the fixed normalization level
$\eta_0=\eta$, yields an affine map
\[
 \widetilde\ell_k(x)=A_k+B_k\cdot x,
 \qquad
 |A_k|+|B_k|\le \mathrm{C}_0\eta,
\]
with
\[
 \|w_k-\widetilde\ell_k\|_{L^\infty(Q_\rho^{\rho^\beta})}
 \le\eta\rho^{1+\beta}.
\]
Set
\[
 a_{k+1}:=a_k+r_k\mu_kA_k,
 \qquad
 l_{k+1}:=l_k+\mu_kB_k.
\]
Since
\[
 r_k^2\mu_k^{-\gamma}\rho^2(\rho^\beta)^{-\gamma}
 =r_{k+1}^2\mu_{k+1}^{-\gamma},
\]
scaling back gives \eqref{eq:FL-flat-k} at level $k+1$, and
\eqref{eq:FL-increments} follows.
\end{proof}

\subsection{Infinite critical iteration}
There are two alternatives. If \eqref{eq:FL-critical} holds for every
$k$, then $|l_k|\le K_*\mu_k\to0$.  Summing the increments gives
$a_k\to u(0,0)=0$ and $l_k\to0$, while
\eqref{eq:FL-flat-k} yields
\begin{equation}\label{eq:FL-critical-expansion}
 |u(x,t)-u(0,0)|
 \le \mathrm{C} r^{1+\beta}
\end{equation}
whenever $|x|\le r$ and $|t|\le r^{2-\beta\gamma}$.  For the uniform
pointwise estimates in the two alternatives, set
\begin{equation}\label{eq:FL-comparison-cylinder}
 \vartheta_\gamma:=\begin{cases}
  2,&\gamma\ge0,\\
  2-\beta\gamma,&\gamma<0,
 \end{cases}
 \qquad
 \mathcal C_r(z_0):=B_r(x_0)\times(t_0-r^{\vartheta_\gamma},t_0].
\end{equation}
For $r<1$, $\mathcal C_r\subset Q_r^{r^\beta}$ for both signs of
$\gamma$, and consequently
\begin{equation}\label{eq:FL-critical-standard}
 \sup_{\mathcal C_r}|u-u(0,0)|
 \le \mathrm{C}r^{1+\beta}.
\end{equation}
In particular $Du(0,0)=0$.

\subsection{The first stopping scale}
Suppose instead that the iteration stops for the first time at level $k$.
Then
\[
 |q_k|:=\frac{|l_k|}{\mu_k}>K_*,
 \qquad
 |q_{k-1}|\le K_*.
\]
By \eqref{eq:FL-increments},
\begin{equation}\label{eq:FL-q-upper}
 |q_k|
 \le \rho^{-\beta}(K_*+ \mathrm{C}_0\eta) \le K^*.
\end{equation}
Thus the first stopping shift belongs to a fixed annulus.

\subsection{A uniform noncritical Lipschitz estimate}\label{sec:noncritical}

To use uniformly parabolic estimates at the stopping scale, we need
$|q+Dw|$ to stay away from zero. The following lemma gives this at
every test contact by proving that $w$ has spatial Lipschitz constant
at most one.

\begin{lemma}[Noncritical Lipschitz estimate]\label{lem:FL-noncritical}
Fix $K^*>2$.  There exist
$\varepsilon_*>0$ and $\eta_*>0$, depending only on $n$ and $K^*$, such
that the following holds.  Assume
\[
 |\gamma|\le\varepsilon_*,
 \qquad
 0\le\delta\le\varepsilon_*,
 \qquad
 2\le|q|\le K^*,
\]
and let $F$ be $(1,1+\delta)$-elliptic with $F(0)=0$. If $w$
is a bounded continuous viscosity solution of
\[
 w_t-|q+Dw|^\gamma F(D^2w)=f
 \quad\hbox{in }Q_1
\]
and
\[
 \|w\|_{L^\infty(Q_1)}+\|f\|_{L^\infty(Q_1)}\le\eta_*,
\]
then, for every fixed $t\in[-(\frac{3}{4})^2,0]$,
\begin{equation}\label{eq:FL-unit-lip}
 |w(x,t)-w(y,t)|\le|x-y|,
 \qquad x,y\in B_{\frac{3}{4}}.
\end{equation}
Consequently every spatial gradient $p$ of a viscosity test function touching
$w$ from above or below in $Q_{\frac{3}{4}}$ satisfies $|p|\le1$, and therefore
\begin{equation}\label{eq:FL-annulus}
 1\le|q+p|\le K^*+1.
\end{equation}
\end{lemma}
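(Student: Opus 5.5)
We argue by doubling of variables, applying Lemma~\ref{lem:doubling} with a concave penalization, in the spirit of Step~1 of Lemma~\ref{lem:FL-equicont}. The difference is that the target modulus is now Lipschitz with constant one, not H\"older with a large constant. Fix $t_0\in[-(\frac34)^2,0)$ and $x_0,y_0\in B_{\frac34}$. Suppose, for contradiction, that
\[
w(x_0,t_0)-w(y_0,t_0)-|x_0-y_0|>0 .
\]
Consider
\[
\Psi(x,y,t)=w(x,t)-w(y,t)-\varphi(|x-y|)-\frac{K}{2}\bigl(|x-x_0|^2+|y-y_0|^2+|t-t_0|^2\bigr)-\frac{\zeta}{T-t},
\]
with the concave profile
\[
\varphi(r)=r-\kappa r^{3/2},\qquad \kappa\ \text{small},
\]
so that $\frac12\le\varphi'\le1$ on $[0,2]$ and $\varphi''(r)=-\frac34\kappa r^{-1/2}$. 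We take $K$ a fixed large localization constant, and $\zeta$, $T$ as before. To make $\varphi(|x_0-y_0|)<|x_0-y_0|$ harmless, we may replace $|x-y|$ by $(1-\kappa)^{-1}\varphi$, or simply prove the estimate with constant $1$ for $\varphi$ and note $\varphi(r)\le r$. The smallness $\|w\|_\infty\le\eta_*$ does the localizing. Positivity of the maximum forces
\[
K|\bar x-x_0|^2+K|\bar y-y_0|^2\le 4\eta_*,\qquad \varphi(d)\le 2\eta_*,
\]
so $d=|\bar x-\bar y|\lesssim\eta_*$. Choosing $K\gg1$ and then $\eta_*\ll K^{-1}$ keeps the maximizers interior, and makes $K|\bar x-x_0|,K|\bar y-y_0|\le C\sqrt{K\eta_*}$ small.

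\emph{Gradient control.} The total gradients are
\[
\xi_1=q+\varphi'(d)e+K(\bar x-x_0),\qquad \xi_2=q+\varphi'(d)e-K(\bar y-y_0).
\]
Since $|q|\ge2$ and $|\varphi'(d)e|\le1$, we get $|\xi_i|\ge 1-C\sqrt{K\eta_*}\ge\frac12$. Likewise $|\xi_i|\le K^*+2$ and $|\xi_1-\xi_2|\le C\sqrt{K\eta_*}$. Hence Lemma~\ref{lem:doubling} applies. This is exactly where $|q|\ge2$ beats the candidate slope $1$, and it is the reason for proving Lipschitz constant $1$. We get matrices with
\[
\mathrm{X}\le\mathrm{Y},\qquad \langle(\mathrm{X}-\mathrm{Y})e,e\rangle\le2\varphi''(d)=-\tfrac32\kappa d^{-1/2},\qquad \|\mathrm{X}\|+\|\mathrm{Y}\|\le C(d^{-1/2}+d^{-1}).
\]

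\emph{Main obstacle: absorbing the weight mismatch.} In \eqref{eq:rigorous-subtraction}, write $A_i=|\xi_i|^\gamma$ and split
\[
A_1F(\mathrm{X}+K\Id_n)-A_2F(\mathrm{Y}-K\Id_n)=A_1\bigl[F(\mathrm{X}+K\Id_n)-F(\mathrm{Y}-K\Id_n)\bigr]+(A_1-A_2)F(\mathrm{Y}-K\Id_n).
\]
The first bracket is at most $\mathscr P^+_{1,1+\delta}(\mathrm{X}-\mathrm{Y})+CK$. Since $\mathrm{X}-\mathrm{Y}\le0$, this equals $\operatorname{tr}(\mathrm{X}-\mathrm{Y})+CK\le-\frac32\kappa d^{-1/2}+CK$, and $A_1\ge c$. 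The danger is the second term. Here $|A_1-A_2|\le C|\gamma|$ from the annulus bounds, but $\|\mathrm{Y}\|$ can be as large as $d^{-1}$, which dominates $d^{-1/2}$.

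The first remedy to try is to exploit $\varphi'/d$ only in the tangential directions, and to bound the tangential part of $F(\mathrm{Y})$ via $\mathrm{X}\le\mathrm{Y}$ together with the upper block bound on $\mathrm{X}$. If that fails, we choose the profile and parameters so that $\varepsilon_*$ absorbs it. The mismatch is bounded by
\[
C|\gamma|(d^{-1}+K)\le C\varepsilon_*d^{-1},
\]
while the good term is $c\kappa d^{-1/2}$. With $d\lesssim\eta_*$, this does not close by itself. So we replace $\varphi$ by a profile with $|\varphi''(d)|\gtrsim\varphi'(d)/d$, e.g.
\[
\varphi(r)=r-\kappa r^{1+\sigma}
\]
with $\sigma$ small but arranged so that $|\varphi''|\,d\asymp\kappa\sigma d^{\sigma}$. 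Then we require
\[
\varepsilon_*\ll\kappa\sigma\,\eta_*^{\sigma}
\]
after fixing $\eta_*$. This ordering is allowed, since $\varepsilon_*$ may depend on everything else.

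The remaining terms are $CK$, $2\|f\|_\infty\le2\eta_*$, and $K(\bar t-t_0)\ge-K$; these are constants dominated by the blow-up of the good term as $d\to0$. We still must check that $d$ is small, which is why the $\eta_*$-smallness of $w$ is essential. Together these give a contradiction, proving \eqref{eq:FL-unit-lip} for $t<0$, and by continuity at $t=0$. The consequence \eqref{eq:FL-annulus} is immediate: a smooth test touching a spatially $1$-Lipschitz function has gradient $|p|\le1$, hence
\[
1\le|q|-1\le|q+p|\le K^*+1 .
\]
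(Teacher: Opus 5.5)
There is a genuine gap in absorbing the weight mismatch, which is the heart of the lemma. Your outline is sound up to that point, which you flag yourself.

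With your localization, the only control on $|\xi_1-\xi_2|$ is $C\sqrt{K\eta_*}$. This does not decay as $d=|\bar x-\bar y|\to0$. Meanwhile $\|\mathrm{Y}\|$ contains the tangential curvature $\varphi'(d)/d\sim d^{-1}$. So the error $(A_1-A_2)F(\mathrm{Y}-K\Id_n)$ has size $|\gamma|\,d^{-1}$, against a good term of size $|\varphi''(d)|$.

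For $\varphi(r)=r-\kappa r^{1+\sigma}$ you get $|\varphi''(d)|\,d\asymp\kappa\sigma d^{\sigma}\to0$, so you would need $\varepsilon_*\ll\kappa\sigma d^{\sigma}$ at the maximizer. The information $d\lesssim\eta_*$ is an upper bound on $d$, but this needs a lower bound. Nothing prevents the maximizer from sitting at arbitrarily small $d$, and then no fixed $\varepsilon_*>0$ works.

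No choice of profile rescues this. The requirement $|\varphi''(d)|\gtrsim\varphi'(d)/d$ together with $\frac12\le\varphi'\le1$ is impossible: integrating $\varphi''$ from $d$ to $1$ gives $\varphi'(d)\gtrsim\log(1/d)$. Your tangential remedy is not carried out. In any case, the theorem of sums only gives bounds of order $d^{-1}$ on the tangential eigenvalues of $\mathrm{Y}$, so it does not bring $|F(\mathrm{Y})|$ down to the size of $|\varphi''(d)|$.

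The missing idea is to make the gradient mismatch itself vanish at a rate as $d\to0$. You do this by bounding the localization penalty with a H\"older modulus of $w$ at scale $d$.

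The paper first combines Lemma~\ref{lem:FL-equicont} with the small oscillation to get $|w(x,t)-w(y,t)|\le H|x-y|^{\bar\theta}$, with $H\to0$ as $\eta_*\to0$. Positivity of $\Psi$ then gives $\frac{L_1}{2}(|\bar x-x_0|^2+|\bar y-y_0|^2)\le Hd^{\bar\theta}$. Hence $|\xi_1-\xi_2|\le CHd^{\bar\theta/2}$, and the mismatch is at most $C|\gamma|Hd^{\bar\theta/2}(d^{-1}+d^{\nu-2})$. With the profile $\varphi(s)=s-\kappa_0s^{\nu}$ and $\nu=1+\frac{\bar\theta}{4}$, this is at most $C|\gamma|Hd^{\nu-2}$. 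That is the same order as the good term $-c\,d^{\nu-2}$, and only now does smallness of $\varepsilon_*$ absorb it.

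The same device repairs your setup. The modulus $C|x-y|^{1/2}$ from Lemma~\ref{lem:FL-equicont} gives $|\xi_1-\xi_2|\le C\sqrt{K}\,d^{1/4}$, and then any $\sigma<\frac14$ works. Your handling of the constant terms via $d\lesssim\eta_*$ is fine.

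The rest of your outline is correct: the annulus bounds from $|q|\ge2$, the applicability of Lemma~\ref{lem:doubling}, and the deduction of \eqref{eq:FL-annulus}.
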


\begin{proof}
\smallskip
\noindent\emph{Step 1: a small H\"older modulus.}
Impose first $\varepsilon_*\le\frac{1}{4}$ and $\eta_*\le1$.
Lemma~\ref{lem:FL-equicont}, with $M=K^*$ and $\gamma_0=\frac{1}{4}$,
gives, for a fixed $\theta_0\in(0,1)$,
\[
 |w(x,t)-w(y,t)|\le \mathrm{C}_H|x-y|^{\theta_0}
\]
on $B_{\frac{7}{8}}$ at each fixed admissible time.  Together with
$\osc w\le2\eta_*$ this implies, after lowering the exponent to
\[
 \bar\theta:=\frac{\theta_0}{2},
\]
that
\begin{equation}\label{eq:FL-small-holder}
 |w(x,t)-w(y,t)|
 \le H|x-y|^{\bar\theta},
 \qquad
 H:=(2\eta_*)^{\frac{1}{2}}C_H^{\frac{1}{2}}.
\end{equation}
Hence $H\to0$ as $\eta_* \to 0$.

\smallskip
\noindent\emph{Step 2: localization.}
Let
\[
 \nu:=1+\frac{\bar\theta}{4}\in(1,2),
 \qquad
 \varphi(s):=s-\kappa_0s^\nu,
\]
then choose $\kappa_0>0$ so small that
$\varphi\in C^1([0,2])\cap C^2((0,2])$ and
\begin{equation}\label{eq:FL-phi}
 \frac{3}{4}\le\varphi'(s)\le1\quad(0\le s\le2),
 \quad
 \varphi''(s)=-c_\nu s^{\nu-2}\quad(0<s\le2).
\end{equation}
Suppose that \eqref{eq:FL-unit-lip} fails. By continuity, and after
interchanging the two points if necessary, there are
$x_0,y_0\in B_{\frac{3}{4}}$ and $t_0\in[-(\frac{3}{4})^2,0)$ for which
\[
 m_0:=w(x_0,t_0)-w(y_0,t_0)-|x_0-y_0|>0.
\]
Choose $t_0<T<0$ and $0<\zeta<\frac{m_0(T-t_0)}{2}$. Let
$L_1=4^5\eta_*$ and maximize
\begin{align*}
 \Psi(x,y,t):={}&w(x,t)-w(y,t)-\varphi(|x-y|)\\
 &-\frac{L_1}{2}\bigl(|x-x_0|^2+|y-y_0|^2+|t-t_0|^2\bigr)
 -\frac{\zeta}{T-t}
\end{align*}
over $\overline B_{\frac{7}{8}}^2\times[-(\frac{7}{8})^2,T)$.
Since $\varphi(s)\le s$, its value at $(x_0,y_0,t_0)$ exceeds
$\frac{m_0}{2}$. Its positive maximum lies away from the lateral and bottom
boundaries by the choice of $L_1$, and away from $t=T$ by the time pole.
For fixed $\zeta$, use the regularization in Lemma~\ref{lem:doubling}. Below $(\bar x,\bar y,\bar t)$ denotes
a limiting positive maximizer obtained as $\epsilon \to 0$.
The gradient bounds established below verify the nonzero-gradient
hypothesis at every positive maximizer and justify
\eqref{eq:rigorous-subtraction}. Set
\[
 s:=|\bar x-\bar y|,
 \qquad
 e:=\frac{\bar x-\bar y}{s}.
\]
Then $s>0$.  From \eqref{eq:FL-small-holder} and positivity of $\Psi$,
\[
 \frac{L_1}{2}\bigl(
 |\bar x-x_0|^2+|\bar y-y_0|^2
 \bigr)
 \le Hs^{\bar\theta},
\]
and therefore
\begin{equation}\label{eq:FL-localizer-grad}
 L_1|\bar x-x_0|+L_1|\bar y-y_0|
 \le \mathrm{C}\sqrt{L_1H}\,s^{\frac{\bar\theta}{2}}
 \le \mathrm{C} Hs^{\frac{\bar\theta}{2}},
\end{equation}
because $L_1=4^5\eta_*$, $H=(2C_H\eta_*)^{\frac{1}{2}}$, and
$\eta_*\le1$. The passage from the regularized maxima is justified
by \eqref{eq:distance-separated}.

\smallskip
\noindent\emph{Step 3: test gradients and matrix bounds.}
The spatial test gradients are
\[
 a_1=\varphi'(s)e+L_1(\bar x-x_0),
 \qquad
 a_2=\varphi'(s)e-L_1(\bar y-y_0).
\]
By \eqref{eq:FL-phi} and \eqref{eq:FL-localizer-grad},
$|a_i|\le \frac{9}{8}$ when $\eta_*$ is small. Since $|q|\ge2$, the total
gradients $\xi_i=q+a_i$ satisfy
\begin{equation}\label{eq:FL-fixed-annulus}
 \frac{3}{4}\le|\xi_i|\le K^*+\frac{3}{2},
\end{equation}
and
\begin{equation}\label{eq:FL-xi-diff}
 |\xi_1-\xi_2|\le CHs^{\frac{\bar\theta}{2}}.
\end{equation}
These bounds hold at every positive maximizer. We may therefore apply
Lemma~\ref{lem:doubling}, with localization coefficient $L_1$.
Its matrices satisfy
\begin{equation}\label{eq:FL-XY}
 \mathrm{X}- \mathrm{Y}\le0,\qquad
 \langle(\mathrm{X}-\mathrm{Y})e,e\rangle\le2\varphi''(s)=-2c_\nu s^{\nu-2},
\end{equation}
and
\begin{equation}\label{eq:FL-XYnorm}
 \|\mathrm{X}\|+\|\mathrm{Y}\|\le \mathrm{C}\left(s^{-1}+s^{\nu-2}\right).
\end{equation}

\smallskip
\noindent\emph{Step 4: contradiction.}
Let
\[
 A_i:=|\xi_i|^\gamma.
\]
In \eqref{eq:rigorous-subtraction}, the localization matrices and
the quadratic time localization contribute at most $\mathrm{C}L_1$ because the total gradients lie in the fixed annulus. The term
$\frac{\zeta}{(T-\bar t)^2}$ on the left is positive and can be discarded.
Thus \eqref{eq:rigorous-subtraction} gives
\begin{equation}\label{eq:FL-visc-subtract}
 0\le
 A_1\bigl(F(\mathrm{X})-F(\mathrm{Y})\bigr)
 +(A_1-A_2)F(\mathrm{Y})
 +\mathrm{C}L_1+2\|f\|_\infty.
\end{equation}
Since $\mathrm{N}:=\mathrm{X}-\mathrm{Y} \le 0$, the upper Pucci inequality gives
\[
 F(\mathrm{X})-F(\mathrm{Y})
 \le\mathscr P^+_{1,1+\delta}(\mathrm{N})
 =\tr \mathrm{N}.
\]
Because $\mathrm{N} \le0$,
$\tr \mathrm{N} \le\langle \mathrm{N}e,e\rangle$, and hence
\begin{equation}\label{eq:FL-main-negative}
 F(\mathrm{X})-F(\mathrm{Y})
 \le-\mathrm{c} s^{\nu-2}.
\end{equation}
On the annulus \eqref{eq:FL-fixed-annulus}, $A_1$ is bounded above and below
by positive universal constants when $\gamma$ is small.  Thus the first term
in \eqref{eq:FL-visc-subtract} is bounded above by
$-\mathrm{c} s^{\nu-2}$.

For the mismatch term, the mean-value theorem applied to $r\mapsto r^\gamma$
on the interval $[\frac{3}{4},K^*+\frac{3}{2}]$, together with $\big||\xi_1|-|\xi_2|\big|\le|\xi_1-\xi_2|$, yields
\[
 |A_1-A_2|
 \le \mathrm{C}_{K^*}|\gamma|\,|\xi_1-\xi_2|.
\]
Also $F(0)=0$ and uniform ellipticity imply
$|F(\mathrm{Y})|\le \mathrm{C}_{n,K^*}\|\mathrm{Y}\|$.  Using
\eqref{eq:FL-xi-diff} and \eqref{eq:FL-XYnorm},
\begin{align*}
 |(A_1-A_2)F(\mathrm{Y})|
 &\le \mathrm{C}_{K^*}|\gamma|Hs^{\frac{\bar\theta}{2}}
 \bigl(s^{-1}+s^{\nu-2}\bigr)\\
 &\le \mathrm{C}_{K^*}|\gamma|Hs^{\nu-2}.
\end{align*}
The last inequality follows because
\[
 1+\frac{\bar\theta}{2}-\nu
 =\frac{\bar\theta}{4}>0.
\]
We first choose $\varepsilon_*$ so that the coefficient of this error is at
most one quarter of the negative coefficient in
\eqref{eq:FL-main-negative}, and then choose $\eta_*$ so small that
$\mathrm{C} L_1+2\eta_*$ is at most another quarter.  Since $s\le2$ and
$\nu<2$, the negative term has a fixed positive lower magnitude.  This
contradicts \eqref{eq:FL-visc-subtract}.

Hence \eqref{eq:FL-unit-lip} holds for $t_0<0$, and by continuity
also for $t_0=0$. Every spatial test gradient has norm at most one.
The bounds $2\le|q|\le K^*$ then give \eqref{eq:FL-annulus}.
\end{proof}

\subsection{Uniformly parabolic estimates}\label{sec:smooth}
We next prove a perturbative estimate for uniformly parabolic equations.
It will apply to the stopping equation after extending its operator
outside the annulus of test gradients.

\begin{lemma}[Small perturbations of the heat equation]\label{lem:FL-smooth}
Let $0<\beta<1$.  There exists $\eta_s=\eta_s(n,\beta)>0$ such that the
following holds.  Let $\mathcal F(p, \mathrm{M})$ be continuous and globally uniformly
parabolic with fixed ellipticity constants $ \frac{1}{2}$ and $3$, with $\mathcal F(p,0)=0$, and assume
\begin{equation}\label{eq:FL-G-close}
 |\mathcal F(p,\mathrm{M})-\tr \mathrm{M}|
 \le\eta_s\|\mathrm{M}\|
 \quad(p\in\mathbb R^n,\ \mathrm{M} \in \text{Sym}(n)).
\end{equation}
If $w$ is a continuous viscosity solution of
\[
 w_t-\mathcal F(Dw,D^2w)= f
 \quad\hbox{in }Q_1,
 \qquad
 \|w\|_{L^\infty(Q_1)}\le1,
 \qquad
 \|f\|_\infty\le\eta_s,
\]
then
\begin{equation}\label{eq:FL-smooth-est}
 \|w\|_{C^{1+\beta,\frac{1+\beta}{2}}(Q_{\frac{1}{2}})}
 \le \mathrm{C}(n,\beta).
\end{equation}
\end{lemma}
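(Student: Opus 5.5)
The plan is a standard compactness-plus-iteration argument at the level of a fixed uniformly parabolic class, in the spirit of \cite{LY24}, carried out directly for operators $\mathcal F(p,\mathrm{M})$ close to the trace. \emph{Step 1: approximation.} I would first show that for every $\eta>0$ there is $\eta_s'(\eta)>0$ with the following property: if $\mathcal F$ is as in the statement with \eqref{eq:FL-G-close} for $\eta_s'$, and $\|w\|_\infty\le1$, $\|f\|_\infty\le\eta_s'$, then some caloric $H$ with $\|H\|_{C^{2,1}(Q_{1/2})}\le \mathrm{C}(n)$ satisfies $\|w-H\|_{L^\infty(Q_{1/2})}\le\eta$. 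The argument is by contradiction, as in Lemma~\ref{lem:FL-heat}, but simpler. Uniform equicontinuity comes from the Krylov--Safonov/Wang H\"older estimates \cite{W92a}, since $w$ lies in the Pucci class $S(\frac12,3,f)$ because $\mathcal F(p,0)=0$. The limit equation is identified by standard viscosity stability: $\mathcal F_j(p,\mathrm{M})\to\operatorname{tr}\mathrm{M}$ locally uniformly, and there is no gradient singularity, so Lemma~\ref{lem:recognition} is not needed.

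\emph{Step 2: invariance of the class.} The key structural point is closure under the rescalings used in the iteration. Given an affine $\ell(x)=a+b\cdot x$, $r\in(0,1)$ and $\mu>0$, consider $v(x,t)=\frac{w(rx,r^2t)-\ell(rx)}{r^{1+\beta}}$. It solves
\[
v_t-\widetilde{\mathcal F}(Dv,D^2v)=\widetilde f,\qquad \widetilde{\mathcal F}(p,\mathrm{M})=r^{1-\beta}\mathcal F\bigl(b+r^\beta p,\,r^{\beta-1}\mathrm{M}\bigr),\qquad \widetilde f(x,t)=r^{1-\beta}f(rx,r^2t).
\]
Then $\widetilde{\mathcal F}(p,0)=0$, the ellipticity constants $\frac12,3$ are preserved, and \eqref{eq:FL-G-close} holds with the same $\eta_s$, since the bound is $1$-homogeneous in $\|\mathrm{M}\|$ and uniform in $p$. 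This uniformity in $p$ is exactly what removes any dependence on the accumulated slope. Moreover $\|\widetilde f\|_\infty\le\|f\|_\infty$ because $\beta<1$.

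\emph{Step 3: iteration.} Fix $\rho$ with $\mathrm{C}(n)\rho^{2}\le\frac12\rho^{1+\beta}$, take $\eta=\frac12\rho^{1+\beta}$, and set $\eta_s=\eta_s'(\eta)$. By Step~1 and Taylor expansion of $H$ there is an affine $\ell$ with $\sup_{Q_\rho}|w-\ell|\le\rho^{1+\beta}$ and coefficients bounded by $\mathrm{C}(n)$. Iterating via Step~2 gives affine maps $\ell_k$ with
\[
\sup_{Q_{\rho^k}}|w-\ell_k|\le\rho^{k(1+\beta)},\qquad |a_{k+1}-a_k|\le \mathrm{C}\rho^{k(1+\beta)},\qquad |b_{k+1}-b_k|\le \mathrm{C}\rho^{k\beta}.
\]
This yields a pointwise $C^{1+\beta,\frac{1+\beta}{2}}$ expansion at the origin. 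The same holds at every $z_0\in Q_{1/2}$ after translation and a fixed rescaling $Q_{1/2}(z_0)\to Q_1$, which costs only constants. \emph{Step 4: from pointwise to H\"older.} The standard comparison of expansions at two points $z_1,z_2$ at scale $r\sim|x_1-x_2|+|t_1-t_2|^{1/2}$ gives $[Du]_{C^{\beta,\beta/2}}$ and the time H\"older bound of order $\frac{1+\beta}{2}$, hence \eqref{eq:FL-smooth-est}.

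The main obstacle is Step~1's compactness in the presence of $p$-dependence. Equicontinuity is fine by the Pucci class, but passing to the limit requires $\mathcal F_j(p_j,\mathrm{M}_j)\to\operatorname{tr}\mathrm{M}$ along test data, and this follows from \eqref{eq:FL-G-close} uniformly in $p$. I would check this carefully, along with the fact that no Lipschitz continuity of $\mathcal F$ in $p$ is needed for the existence or stability used here, only continuity. Uniqueness is never invoked, which keeps the argument safe.
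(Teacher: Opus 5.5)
Your proposal is correct and follows essentially the same route as the paper: compactness in the fixed Pucci class (Wang's H\"older estimates plus viscosity stability) yields one-step affine flatness. The class is invariant under the rescaling $r^{1-\beta}\mathcal F(b+r^\beta p,r^{\beta-1}\mathrm{M})$, uniformly in the accumulated slope because \eqref{eq:FL-G-close} is uniform in $p$; this is followed by iteration, recentering at points of $Q_{\frac12}$, and comparison of affine expansions. The only difference is cosmetic: you first isolate a caloric-approximation statement and then Taylor-expand $H$, whereas the paper runs the contradiction argument directly on the one-step flatness conclusion.
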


\begin{proof}
\emph{Step 1: One-step approximation.} We first choose $\rho_s\in(0,\frac{1}{4})$
so small that $ \mathrm{C}_n\rho_s^2\le\frac{\rho_s^{1+\beta}}{4}$, where $\mathrm{C}_n$ is the interior caloric Taylor constant for functions bounded by one.
There is $\eta_s>0$ such that each normalized solution in the statement
admits an affine function $\ell=a+b\cdot x$ with
\begin{equation}\label{eq:smooth-one-step}
 \sup_{Q_{\rho_s}}|w-\ell|\le\rho_s^{1+\beta},
 \quad |a|+|b|\le \mathrm{C}_n.
\end{equation}
Indeed, otherwise there are solutions $w_j$, operators $\mathcal F_j$,
and sources $f_j$ for which the perturbation constants and $\|f_j\|_\infty $
tend to zero and the conclusion fails. The fixed ellipticity constants
and $\mathcal F_j(p,0)=0$ place $w_j$ in a common parabolic Pucci class.
The interior H\"older estimate \cite{W92a} supplies a locally
uniformly convergent subsequence. At every test contact,
\[
 |\mathcal F_j(D\phi,D^2\phi)-\Delta\phi|
 \le\eta_j\|D^2\phi\|\longrightarrow0.
\]
Viscosity stability shows that the limit $w_\infty$ is caloric. Its
spatial Taylor polynomial at the origin has error at most $C_n\rho_s^2$
on $Q_{\rho_s}$ and bounded coefficients. Local uniform convergence
then contradicts the failure of \eqref{eq:smooth-one-step}.

\emph{Step 2: Iteration.} Set $\ell_0=0$. Suppose
\[
 \sup_{Q_{\rho_s^j}}|w-\ell_j|\le\rho_s^{j(1+\beta)},\qquad
 \ell_j(x)=a_j+b_j\cdot x.
\]
Define
\[
 W_j(x,t):=\frac{w(\rho_s^jx,\rho_s^{2j}t)-\ell_j(\rho_s^jx)}{\rho_s^{j(1+\beta)}}.
\]
Then $\|W_j\|_\infty\le1$, and its operator and source are
\begin{align*}
 \mathcal F_j(p,\mathrm{M})&:=\rho_s^{j(1-\beta)}
 \mathcal F(b_j+\rho_s^{j\beta}p,\rho_s^{j(\beta-1)}\mathrm{M}),\\
 f_j(x,t)&:=\rho_s^{j(1-\beta)}f(\rho_s^jx,\rho_s^{2j}t).
\end{align*}
The ellipticity constants are unchanged, $\mathcal F_j(p,0)=0 $, and
\[
 |\mathcal F_j(p, \mathrm{M})-\tr \mathrm{M}|\le\eta_s\|\mathrm{M}\|,
 \quad \|f_j\|_\infty\le\eta_s.
\]
These estimates are uniform in $b_j$ because the perturbation bound holds
for every gradient. Apply \eqref{eq:smooth-one-step} to $W_j$, obtaining
$A_j+B_j\cdot x$, and set
\[
 a_{j+1}:=a_j+\rho_s^{j(1+\beta)}A_j,\qquad
 b_{j+1}:=b_j+\rho_s^{j\beta}B_j.
\]
This yields the next flatness estimate and
\[
 |a_{j+1}-a_j|\le \mathrm{C}\rho_s^{j(1+\beta)},\qquad
 |b_{j+1}-b_j|\le  \mathrm{C} \rho_s^{j\beta}.
\]
Hence $a_j\to w(0,0)$ and $b_j\to b_\infty$, with geometric tail
bounds. For $\rho_s^{j+1}<r\le\rho_s^j$, the flatness estimate and
the tails give
\[
 \sup_{Q_r}|w-w(0,0)-b_\infty\cdot x|\le  \mathrm{C} r^{1+\beta}.
\]
For $z_0=(x_0,t_0)\in Q_{\frac{1}{2}}$, set
$a=\frac{1}{4}$ and $V(y,s):=w(x_0+ay,t_0+a^2s)$. Then $V$ is defined
on $Q_1$, has supremum norm at most one, and solves an equation with
operator
\[
 \widetilde{\mathcal F}(p, \mathrm{M})=a^2\mathcal F(a^{-1}p,a^{-2}\mathrm{M})
\]
and source $a^2f(x_0+ay,t_0+a^2s)$. The ellipticity constants
and the perturbation bound are unchanged. Repeating the iteration for
$V$ therefore gives the same affine expansion at every center of
$Q_{\frac{1}{2}}$, with uniform constants at radii at most $a$.

\emph{Step 3: Comparison of affine expansions.} At a fixed time, expansions
at two nearby points can be compared on a ball of radius comparable to
their distance; testing the difference of the two affine functions in
coordinate directions gives $|Dw(x,t)-Dw(y,t)|\le \mathrm{C}|x-y|^\beta$.
For $s<t$, put $r=2\sqrt{t-s}$. The expansion based at $(x,t)$
reaches the earlier time $s$. Compare it there with the expansion based
at $(x,s)$ on $B_r(x)$ to obtain
$|Dw(x,t)-Dw(x,s)|\le \mathrm{C}r^\beta$. Evaluating the later expansion at
$x$ itself gives $|w(x,t)-w(x,s)|\le \mathrm{C} r^{1+\beta}$. The uniform slope bounds and boundedness handle larger distances. These estimates prove \eqref{eq:FL-smooth-est}.
\end{proof}

\subsection{Application at the stopping scale}
\begin{lemma}[Reduction of the stopping equation]\label{lem:FL-extension}
At the first stopping scale, if
$|\gamma|+\delta+\|f\|_\infty$ and the normalization constant $\eta$ are
sufficiently small, the normalized deviation $w_k$ satisfies the conclusion
of Lemma~\ref{lem:FL-smooth} in $Q_{\frac{3}{8}}$.  In particular, for
$\sigma=\frac{1}{4}$ there exists $p_k=Dw_k(0,0)$ with $|p_k|\le1$ and
\begin{equation}\label{eq:FL-stop-expansion}
 |w_k(x,t)-w_k(0,0)-p_k\cdot x|
 \le \mathrm{C} \bigl(|x|^{1+\beta}+|t|^{\frac{1+\beta}{2}}\bigr)
 \quad\text{in }Q_\sigma.
\end{equation}
All constants are independent of the stopping level and of the sign of
$\gamma$.
\end{lemma}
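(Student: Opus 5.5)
We treat the normalized deviation $w_k$ at the first stopping level $k$ as the basic object. By Proposition~\ref{prop:FL-iteration}, its flatness estimate \eqref{eq:FL-flat-k} still holds at level $k$, so $\|w_k\|_{L^\infty(Q_1)}\le\eta$. It solves
\[
 (w_k)_t-|q_k+Dw_k|^\gamma F_k(D^2w_k)=f_k \quad\text{in } Q_1,
\]
where $F_k$ is $(1,1+\delta)$-elliptic with $F_k(0)=0$ by Lemma~\ref{Section2.2:lemma1}, and $\|f_k\|_\infty\le\varepsilon_0$ as in the proof of Proposition~\ref{prop:FL-iteration}. By \eqref{eq:FL-q-upper}, the shift satisfies $3=K_*<|q_k|\le K^*$. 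We therefore impose $\varepsilon_0\le\varepsilon_*$ and $\eta+\varepsilon_0\le\eta_*$, with the constants of Lemma~\ref{lem:FL-noncritical} for this $K^*$. That lemma then gives the unit spatial Lipschitz bound \eqref{eq:FL-unit-lip} on $Q_{3/4}$. Consequently, every test gradient $p$ touching $w_k$ in $Q_{3/4}$ satisfies $1\le|q_k+p|\le K^*+1$.

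Next we clamp the gradient weight. Let $\pi$ denote the radial projection of $\mathbb R^n$ onto the closed annulus $\{1\le|\xi|\le K^*+1\}$, and define
\[
 \mathcal F(p,\mathrm M):=|\pi(q_k+p)|^\gamma F_k(\mathrm M).
\]
This function is continuous and satisfies $\mathcal F(p,0)=0$. On every test contact in $Q_{3/4}$ it agrees with the original operator. Hence $w_k$ is a viscosity solution of $(w_k)_t-\mathcal F(Dw_k,D^2w_k)=f_k$ in $Q_{3/4}$. The definition is used only at contacts, which all have nonzero total gradient, so the singular-case clause (ii) of Definition~\ref{Section2:def1} never enters.

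We then check the hypotheses of Lemma~\ref{lem:FL-smooth}. The weight $c(p)=|\pi(q_k+p)|^\gamma$ lies in $[(K^*+1)^{-|\gamma|},(K^*+1)^{|\gamma|}]$. For $|\gamma|$ small this interval is contained in $[\tfrac12\cdot\tfrac{1}{1},\,3/(1+\delta)]$, so $\mathcal F$ is uniformly parabolic with constants $\tfrac12$ and $3$. By \eqref{eq:FL-close-trace},
\[
 |\mathcal F(p,\mathrm M)-\tr\mathrm M|\le |c(p)-1|\,|F_k(\mathrm M)|+|F_k(\mathrm M)-\tr\mathrm M|\le \mathrm C_{n,K^*}(|\gamma|+\delta)\|\mathrm M\|.
\]
This is at most $\eta_s\|\mathrm M\|$ once $\varepsilon_0$ is small. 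We rescale $V(y,s)=w_k(\tfrac34 y,\tfrac9{16}s)$, which leaves the ellipticity constants and the perturbation bound unchanged and has source at most $\varepsilon_0\le\eta_s$. Then Lemma~\ref{lem:FL-smooth} gives a $C^{1+\beta,\frac{1+\beta}{2}}$ bound for $V$ on $Q_{1/2}$, and hence for $w_k$ on $Q_{3/8}$, with constant $\mathrm C(n,\beta)$.

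Finally, set $p_k:=Dw_k(0,0)$. Since $w_k$ is $C^1$ in space, $x\mapsto w_k(0,0)+p_k\cdot x$ plus a small paraboloid serves as a test function. Thus $|p_k|\le1$ follows from \eqref{eq:FL-unit-lip}, or directly from the Lipschitz bound. The expansion \eqref{eq:FL-stop-expansion} on $Q_{1/4}$ follows from the H\"older norm. Write
\[
 w_k(x,t)-w_k(0,0)-p_k\cdot x=[w_k(x,t)-w_k(x,0)]+[w_k(x,0)-w_k(0,0)-p_k\cdot x].
\]
The first bracket is bounded by the temporal $\tfrac{1+\beta}{2}$ seminorm. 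The second is bounded by the mean value theorem and the $C^\beta$ bound on $Dw_k$.

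All thresholds depend only on $n,\alpha$ (through $\beta,\rho,K^*$). They do not depend on $k$ or on the sign of $\gamma$. The main point to verify carefully is the passage to the clamped equation: the lemma must be applied to the regularized test gradients. This is justified because \eqref{eq:FL-annulus} covers every test function, including those arising from closed semijets.
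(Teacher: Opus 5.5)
Your proposal is correct and follows essentially the same route as the paper: the noncritical Lipschitz lemma puts every test gradient in the annulus $1\le|q_k+p|\le K^*+1$, and the weight is then clamped (your $|\pi(q_k+p)|$ coincides with the paper's clamped radius $R_k(p)$). The clamped operator is checked to be $(\frac12,3)$-parabolic and $\mathrm{C}(|\gamma|+\delta)$-close to the trace, and Lemma~\ref{lem:FL-smooth} is applied after rescaling $Q_{\frac34}$ to $Q_1$.

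One small remark concerns the bound $|p_k|\le1$. A $C^{1+\beta,\frac{1+\beta}{2}}$ function need not be touched at a given point by an affine function plus a paraboloid, so drop that justification. Keep the one you also give: differentiability at $(0,0)$ together with the unit spatial Lipschitz bound at $t=0$ already yields $|p_k|\le1$.
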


\begin{proof}
At the first stopping scale,
\eqref{eq:FL-q-upper} and Lemma~\ref{lem:FL-noncritical} give
\[
 1\le|q_k+p|\le K^*+1
\]
for every viscosity test gradient $p$ of $w_k$ in $Q_{\frac{3}{4}}$.  On this fixed
annulus,
\begin{equation}\label{eq:FL-power-close}
 \sup_{1\le r\le K^*+1}|r^\gamma-1|
 \le \mathrm{C}_{K^*}|\gamma|.
\end{equation}
Let $F_k$ be the rescaled operator.  By Lemma~\ref{Section2.2:lemma1},
\[
 |F_k(\mathrm{M})-\tr \mathrm{M}|
 \le \mathrm{C}_n\delta\|\mathrm{M}\|.
\]
Thus, whenever $1\le|q_k+p|\le K^*+1$,
\begin{align*}
 \bigl||q_k+p|^\gamma F_k(\mathrm{M})-\tr \mathrm{M} \bigr|
 &\le
 \bigl||q_k+p|^\gamma-1\bigr|\,|F_k(\mathrm{M})|
 +|F_k(\mathrm{M})-\tr \mathrm{M}|\\
 &\le \mathrm{C}(|\gamma|+\delta)\|\mathrm{M}\|.
\end{align*}
Define the clamped radius
\[
 R_k(p):=\max\{1,\min\{|q_k+p|,K^*+1\}\},\qquad a_*(p):=R_k(p)^\gamma.
\]
At every test gradient, $a_*(p)=|q_k+p|^\gamma$. For all $p$,
the bounds $1\le R_k(p)\le K^*+1$ give
\[
 \frac{3}{4}\le a_*(p)\le\frac{3}{2},\qquad |a_*(p)-1|\le \mathrm{C}_{K^*}|\gamma|
\]
when $|\gamma|$ is sufficiently small.
Define
\[
 \mathcal G_k(p, \mathrm{M}):=a_*(p)F_k(\mathrm{M}).
\]
Then $\mathcal G_k$ is globally uniformly parabolic and
\[
 |\mathcal G_k(p, \mathrm{M})-\tr \mathrm{M}|
 \le \mathrm{C}(|\gamma|+\delta)\|\mathrm{M}\|.
\]
Because the two operators agree at every viscosity contact gradient of
$w_k$, the function $w_k$ is also a viscosity solution of the extended
equation. Choose the parameters so that
$\mathrm{C}(|\gamma|+\delta)\le\eta_s$ and
$\varepsilon_0\le\eta_s$. Then
$\|f_k\|_{L^\infty(Q_1)}\le\varepsilon_0\le\eta_s$ by
\eqref{eq:FL-fk} and \eqref{eq:FL-beta-gamma}, while
$\|w_k\|_{L^\infty(Q_1)}\le\eta\le1$.
For sufficiently small $\delta$, the ellipticity bounds for
$\mathcal G_k$ lie between $\frac{1}{2}$ and $3$.
A fixed standard parabolic rescaling of $Q_{\frac{3}{4}}$ to $Q_1$
therefore allows us to apply Lemma~\ref{lem:FL-smooth} and obtain
the asserted norm bound on $Q_{\frac{3}{8}}$.  Its first-order Taylor estimate at the origin gives \eqref{eq:FL-stop-expansion}.  The gradient bound $|p_k|\le1$ follows from Lemma~\ref{lem:FL-noncritical}.
\end{proof}

\section{Proof of Theorem~\ref{Main:thm}}\label{Section 6}

In this section, we complete the proof of Theorem~\ref{Main:thm}. We first combine the critical and noncritical estimates from Section~\ref{Section 5} into a common affine expansion for normalized solutions, with constants independent of the stopping scale. After transferring this expansion to the original solution, we compare the affine approximations at nearby points to obtain the required spatial
and temporal H\"older estimates.

\subsection{A common affine expansion}
The critical and noncritical alternatives use different cylinders. We
first place their conclusions on the common cylinders
$\mathcal C_r$ defined in \eqref{eq:FL-comparison-cylinder}.

\begin{proposition}\label{prop:FL-pointwise}
There exist $\eta,\varepsilon_0>0$ and $\mathrm{C}>0$, depending
only on $n$ and $\alpha$, with the following property. Let $F$ be
$(1,1+\delta)$-elliptic with $F(0)=0$, and let $u$ be a continuous
viscosity solution of
\[
 u_t-|Du|^\gamma F(D^2u)=f\quad\text{in }Q_1,
\]
where $\gamma>-1$ and $f\in C^0(Q_1)\cap L^\infty(Q_1)$.
Assume that $u$ satisfies the normalization and smallness conditions
in \eqref{eq:FL-normalized}.
Then a vector $ p\in\mathbb R^n$ exists such that $|p|\le \mathrm{C} $ and
\begin{equation}\label{eq:FL-common-expansion}
 \sup_{\mathcal C_r(0,0)}
 |u(x,t)-u(0,0)-p\cdot x|\le \mathrm{C} r^{1+\beta},
 \qquad 0<r\le1.
\end{equation}
In particular, the spatial derivative at the origin exists and equals
$p$.
\end{proposition}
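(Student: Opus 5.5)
The proof splits along the two alternatives of Proposition~\ref{prop:FL-iteration}. The parameters are fixed in this order. First $\gamma_{\rm FL}$ is chosen as in \eqref{eq:FL-beta-gamma}--\eqref{eq:FL-time-conversion}. Then $\rho$ and $\mathrm{C}_0$ come from Lemma~\ref{lem:FL-onestep}, and $K_*=3$, which determines $K^*$. Next $\eta\le\min\{\eta_*,1\}$, with $\eta_*=\eta_*(n,K^*)$ from Lemma~\ref{lem:FL-noncritical}; $\eta$ is also taken small enough that $\mathrm{C}_0\eta\le1$. Finally $\varepsilon_0$ is taken below $\gamma_{\rm FL}$, $\varepsilon_*$, $\eta_*$, and $\eta_s$ (up to the constant in Lemma~\ref{lem:FL-extension}), and below $\varepsilon_1(n,K_*,\beta,\gamma_{\rm FL},\eta)$. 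Since $u(0,0)=0$ and $\osc u\le\eta$, we have $\|u\|_\infty\le\eta$, which gives level-zero flatness.

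\emph{Critical case.} Suppose \eqref{eq:FL-critical} holds for all $k$. Then \eqref{eq:FL-critical-standard} gives \eqref{eq:FL-common-expansion} with $p=0$. To make this precise, take $r_{k+1}<r\le r_k$. Since $a_k\to0$ and $l_k\to0$, summing \eqref{eq:FL-increments} gives $|a_k|\le \mathrm{C}\eta r_k^{1+\beta}$ and $|l_k|\le \mathrm{C}\mu_k$. Combined with \eqref{eq:FL-flat-k}, this bounds $|u|\le \mathrm{C}r^{1+\beta}$ on $Q_{r_k}^{\mu_k}$. This set contains $\mathcal C_r$: for $\gamma\ge0$ we have $r^2\le r_k^{2-\beta\gamma}$, and for $\gamma<0$ we have $r^{2-\beta\gamma}\le r_k^{2-\beta\gamma}$.

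\emph{Stopping case.} Let $k$ be the first level where \eqref{eq:FL-critical} fails. For scales $r_k\le r\le 1$, the flatness at levels $j<k$ still holds, together with $|l_j|\le K_*\mu_j$ for $j<k$ and $|l_k|\le K^*\mu_k$. Take $p:=l_k+\mu_k p_k$, where $p_k$ is given by Lemma~\ref{lem:FL-extension}; then $|p|\le \mathrm{C}\mu_k$. For $r_{j+1}<r\le r_j$ with $j\le k$, we write
\[
u-u(0,0)-p\cdot x=(u-L_j)+(a_j-u(0,0))+(l_j-p)\cdot x .
\]
The three pieces are bounded as follows.
\begin{itemize}
\item The first is controlled by \eqref{eq:FL-flat-k}, giving a bound $\eta r_j^{1+\beta}$.
\item For the second, $|u(0,0)-a_k|=r_k\mu_k|w_k(0,0)|\le\eta r_k^{1+\beta}$, and the increments \eqref{eq:FL-increments} give $|a_j-a_k|\le \mathrm{C}r_j^{1+\beta}$.
\item The third satisfies $|l_j-p|\le \mathrm{C}\mu_j$ by the same increment bounds, so it is at most $\mathrm{C}\mu_j r\le \mathrm{C}r^{1+\beta}$.
\end{itemize}
Each bound is $\mathrm{C}r^{1+\beta}$ on $\mathcal C_r\subset Q_{r_j}^{\mu_j}$.

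For the small scales $r<\sigma r_k$, we rescale \eqref{eq:FL-stop-expansion}. We have
\[
u(x,t)-u(0,0)-p\cdot x=r_k\mu_k\bigl[w_k(y,s)-w_k(0,0)-p_k\cdot y\bigr],
\]
where $y=x/r_k$ and $s=t/(r_k^2\mu_k^{-\gamma})$. This is bounded by $\mathrm{C}r_k^{1+\beta}\bigl(|y|^{1+\beta}+|s|^{\frac{1+\beta}{2}}\bigr)$. On $\mathcal C_r$ we have $|y|\le r/r_k$ and $|s|\le r^{\vartheta_\gamma}r_k^{-2}\mu_k^{\gamma}$. For $\gamma\ge0$ this gives $|s|\le (r/r_k)^2$. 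For $\gamma<0$ it gives $|s|\le (r/r_k)^{2-\beta\gamma}\le (r/r_k)^{2}$ up to the factor $r^{-\beta\gamma}$ absorbed into the exponent. In either case $|s|^{\frac{1+\beta}{2}}\le (r/r_k)^{1+\beta}$, so the total is at most $\mathrm{C}r^{1+\beta}$, and these points also lie in $Q_\sigma$.

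The intermediate range $\sigma r_k\le r\le r_k$ costs only a factor $\sigma^{-(1+\beta)}$ from the flatness estimate at level $k$, since $|p_k|\le1$ and $|w_k|\le\eta$.

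The main obstacle is this bookkeeping of the two different time scalings. The intrinsic cylinder $Q^{\mu}_r$ and the common cylinder $\mathcal C_r$ have to be compared for both signs of $\gamma$, and it must be checked that the constants do not depend on $k$. The remaining danger is whether the temporal rescaling in the stopping regime loses a power of $r$ when $\gamma<0$. The choice $\vartheta_\gamma=2-\beta\gamma$ is exactly what makes $|s|\le(r/r_k)^2$, and I will verify this first.

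Finally, $|p|\le\mathrm{C}$ holds in both cases. The statement that $Du(0,0)$ exists and equals $p$ follows from \eqref{eq:FL-common-expansion} restricted to $t=0$.
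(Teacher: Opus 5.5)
Your proposal is correct and follows essentially the same route as the paper. It uses the same parameter ordering. In the critical case it takes $p=0$ with $\mathcal C_r\subset Q^{\mu_j}_{r_j}$. In the stopping case it takes $p=l_k+\mu_k p_k$, the rescaled Taylor expansion of $w_k$ for $r<\sigma r_k$, level-$k$ flatness for $\sigma r_k\le r\le r_k$, and the earlier levels for $r\ge r_k$. Bounding $a_j-u(0,0)$ and $l_j-p$ through summed increments, rather than through flatness at the origin and $|l_j|+|p|$ as the paper does, is an immaterial variation.

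One small bookkeeping fix is needed. Lemma~\ref{lem:FL-noncritical} requires $\|w_k\|_\infty+\|f_k\|_\infty\le\eta_*$, so you should take $\eta\le\eta_*/2$ and $\varepsilon_0\le\eta_*/2$, as the paper does, rather than each merely below $\eta_*$.
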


\begin{proof}
We specify the order of the choices. Fix $\beta$, $\gamma_{\rm FL}$
and $\rho$ as in Section~\ref{Section 4}. This fixes
$K^*=\rho^{-\beta}(3+ \mathrm{C}_0)$ before the oscillation parameter is chosen.
Let $\eta_*,\varepsilon_*$ be the constants in
Lemma~\ref{lem:FL-noncritical} for this annulus, and choose
$0<\eta\le\min\{1,\frac{\eta_*}{2}\}$. Finally choose $\varepsilon_0>0$
below the one-step threshold for $M=3$ and $\eta_0=\eta$, below
$\varepsilon_*$ and $\frac{\eta_*}{2}$, and below the smallness threshold
required by Lemma~\ref{lem:FL-extension}. We also require
$\varepsilon_0\le\min\{\gamma_{\rm FL},\eta_s\}$. Every choice depends only on
$n$ and $\alpha$.

Suppose first that every level is critical. Take $p=0$. For
$0<r\le1$, choose $j\ge0$ such that
$r\le r_j<\frac{r}{\rho}$, with $j=0$ at $r=1$. We have
$\mathcal C_r\subset Q_{r_j}^{\mu_j}$. Evaluating the flatness estimate
at the origin gives
\[
 |a_j-u(0,0)|\le\eta r_j^{1+\beta}.
\]
Since $|l_j|\le3r_j^\beta$, it follows that
\[
 \sup_{\mathcal C_r}|u-u(0,0)|
 \le2\eta r_j^{1+\beta}+3r_j^\beta r
 \le \mathrm{C} r^{1+\beta}.
\]

Now suppose $k\ge1$ is the first noncritical level. Write
$r_0=r_k$ and $T_0=r_0^{2-\beta\gamma}$. By
Lemma~\ref{lem:FL-extension}, $p_k=Dw_k(0,0)$ exists and $|p_k|\le1$.
Set
\[
 p=l_k+r_0^\beta p_k,
 \quad |p|\le(K^*+1)r_0^\beta.
\]
Scaling \eqref{eq:FL-stop-expansion} back to $u$ gives
\begin{equation}\label{eq:FL-stop-unscaled}
 |u(x,t)-u(0,0)-p\cdot x|
 \le \mathrm{C} \left(|x|^{1+\beta}
 +r_0^{\frac{\beta\gamma(1+\beta)}{2}}|t|^{\frac{1+\beta}{2}}\right)
\end{equation}
whenever $|x|\le\sigma r_0$ and $-\sigma^2T_0\le t\le0$.
Put $\vartheta=\vartheta_\gamma=\max\{2,2-\beta\gamma\}$.
For $r\le\sigma r_0$, the cylinder $\mathcal C_r$ lies in this
Taylor cylinder. Indeed, if $\gamma\ge0$ then
$\frac{r^2}{T_0}\le(\frac{r}{r_0})^2$, whereas if $\gamma<0$ then
$\frac{r^\vartheta}{T_0}=(\frac{r}{r_0})^\vartheta\le(\frac{r}{r_0})^2$.
For $\gamma\ge0$, the time term in
\eqref{eq:FL-stop-unscaled} is at most $ \mathrm{C} r^{1+\beta}$ because
$r_0^{\frac{\beta\gamma(1+\beta)}{2}}\le1$. For $\gamma<0$, it is at most
\[
 \mathrm{C}r^{1+\beta}
 \left(\frac{r}{r_0}\right)^{-\frac{\beta\gamma(1+\beta)}{2}}
 \le \mathrm{C}r^{1+\beta}.
\]
Thus \eqref{eq:FL-common-expansion} holds below $\sigma r_0$.

If $\sigma r_0\le r\le r_0$, use the level-$k$ flatness estimate,
which remains valid at the first failed critical level. Since
$\mathcal C_r\subset Q_{r_0}^{\mu_k}$,
\[
 \sup_{\mathcal C_r}|u-u(0,0)-p\cdot x|
 \le2\eta r_0^{1+\beta}+(|l_k|+|p|)r
 \le \mathrm{C} r^{1+\beta}.
\]
For $r\ge r_0$, choose $j\le k$ with $r\le r_j<\frac{r}{\rho}$.
The slope at this level satisfies $|l_j|\le \mathrm{C} r_j^\beta$: this is the critical bound for $j<k$ and \eqref{eq:FL-q-upper} for $j=k$. Also $|p|\le \mathrm{C}r_0^\beta\le \mathrm{C}r_j^\beta$ and
$\mathcal C_r\subset Q_{r_j}^{\mu_j}$. Therefore
\[
 \sup_{\mathcal C_r}|u-u(0,0)-p\cdot x|
 \le2\eta r_j^{1+\beta}+(|l_j|+|p|)r
 \le \mathrm{C}r^{1+\beta}.
\]
This proves \eqref{eq:FL-common-expansion} in both alternatives.
Restricting to $t=0$ and letting $r \to 0$ identifies $p$ with
the spatial derivative.
\end{proof}

\subsection{Normalization and comparison of the affine expansions}\label{subsec:FL-normalization-comparison}
\begin{proof}[Proof of Theorem~\ref{Main:thm}]
Let $\eta,\varepsilon_0$ be the constants in
Proposition~\ref{prop:FL-pointwise}. Decrease $\varepsilon_0$ if
necessary so that $\varepsilon_0\le\frac{1}{4}$, and take the structural
threshold in the theorem to be $\varepsilon=\frac{\varepsilon_0}{2}$.
Write
\[
 U:=\|u\|_{L^\infty(Q_1)},\qquad H:=\|f\|_{L^\infty(Q_1)},
\]
and choose the fixed normalization constants
\begin{equation}\label{eq:FL-data-normalization}
 A:=\max\left\{1,\frac{2U}{\eta},\frac{2H}{\varepsilon_0}\right\},
 \qquad
 R:=\min\left\{\frac{1}{8},
 \left(\frac{\lambda}{8A^{\frac{1}{4}}}\right)^{\frac{4}{7}}\right\},
 \qquad
 T:=\frac{R^{2+\gamma}}{\lambda A^\gamma}.
\end{equation}
These choices work for either sign of $\gamma$. In fact,
$|\gamma|\le\frac{1}{4}$, $R\le1$ and $A\ge1$ imply
\begin{equation}\label{eq:FL-time-normalization-bounds}
 \frac{R^{\frac{9}{4}}}{\lambda A^{\frac{1}{4}}}
 \le T\le\frac{R^{\frac{7}{4}}A^{\frac{1}{4}}}{\lambda}\le\frac{1}{8}.
\end{equation}

Fix $z_0=(x_0,t_0)\in B_{\frac{1}{2}}\times(-\frac{1}{4},0)$ and define
\[
 v(y,s):=\frac{u(x_0+Ry,t_0+Ts)-u(z_0)}{A}.
\]
The physical cylinder used here lies inside $Q_1$, because
$R\le\frac{1}{8}$ and $T\le\frac{1}{8}$. A direct change of variables gives
\[
 v_s-|Dv|^\gamma G(D^2v)=h\quad\hbox{in }Q_1,
\]
where
\[
 G(\mathrm{M}):=\frac{R^2}{\lambda A}F\left(\frac{A}{R^2}\mathrm{M}\right),
 \qquad
 h(y,s):=\frac{T}{A} f(x_0+Ry,t_0+Ts).
\]
The operator $G$ vanishes at zero and has ellipticity constants
$1$ and $1+\delta$, where $\delta=\frac{\Lambda}{\lambda}-1$. Moreover,
\[
 v(0,0)=0,\qquad \|v\|_{L^\infty(Q_1)}\le\frac{2U}{A}\le\eta,
 \qquad
 \|h\|_\infty\le\frac{H}{8A}\le\frac{\varepsilon_0}{16}.
\]
Also $\osc v\le\frac{\osc_{Q_1}u}{A}\le\frac{2U}{A}\le\eta$. Thus
$|\gamma|+\delta+\|h\|_\infty\le\varepsilon_0$, and
Proposition~\ref{prop:FL-pointwise} applies.

We now express this estimate in the original coordinates. Keep
$\vartheta=\max\{2,2-\beta\gamma\}$ and set
\[
 K=\max\{R^{-1},T^{-\frac{1}{\vartheta}}\}.
\]
The physical cylinder $\mathcal C_r(z_0)$ is mapped into the normalized
cylinder $\mathcal C_{Kr}(0,0)$. The spatial inclusion follows from
$\frac{r}{R}\le Kr$, and the time inclusion from
$\frac{r^\vartheta}{T}\le(Kr)^\vartheta$. Consequently, the spatial derivative
at $z_0$ exists, $|Du(z_0)|\le \frac{\mathrm{C} A}{R}$, and
\begin{equation}\label{eq:FL-physical-expansion}
 \sup_{\mathcal C_r(z_0)}
 |u(x,t)-u(z_0)-Du(z_0)\cdot(x-x_0)|
 \le \mathrm{C} A K^{1+\beta}r^{1+\beta},  \quad 0<r\le K^{-1}.
\end{equation}
All constants can be made independent of $\gamma$. Indeed,
$2\le\vartheta\le2+\frac{\beta}{4}<\frac{9}{4}$, and
\eqref{eq:FL-time-normalization-bounds} gives
\[
 K\le\overline K
 :=\max\{R^{-1},\lambda^{\frac{1}{2}}A^{\frac{1}{8}}R^{-\frac{9}{8}}\}.
\]
Here we used $T<1$ and $\vartheta\ge2$ to obtain
$T^{-\frac{1}{\vartheta}}\le T^{-\frac{1}{2}}$. In particular,
\eqref{eq:FL-physical-expansion} holds at every such center, for the
common radius range $0<r\le r_*:=(4\overline K)^{-1}$, with common
right-hand constant $ \mathrm{C}_*= \mathrm{C}A\overline K^{1+\beta}$.

For clarity, we give the affine comparison in detail. At a fixed time
$t<0$, let $x,y\in B_{\frac{1}{2}}$ and $d=|x-y|\le \frac{r_*}{4}$. Set $r=4d$.
Let $P_x$ and $P_y$ be the affine functions supplied by
\eqref{eq:FL-physical-expansion}, centered at $(x,t)$ and $(y,t)$.
The points $x$ and $x+(\frac{r}{2})e_i$ lie in both spatial balls of radius
$r$. At time $t$, each of $P_x$ and $P_y$ differs from $u$ at these
points by at most $ \mathrm{C}_*r^{1+\beta}$. Hence
\[
 \left|(Du(x,t)-Du(y,t))\cdot\frac{r}{2}e_i\right|
 \le4 \mathrm{C}_*r^{1+\beta}.
\]
Summing the coordinate estimates yields
\begin{equation}\label{eq:FL-gradient-space}
 |Du(x,t)-Du(y,t)|\le \mathrm{C}|x-y|^\beta.
\end{equation}

Next fix $x\in B_{\frac{1}{2}}$ and $-\frac{1}{4}<s<t<0$. Put
$\tau=t-s$ and $r=(2\tau)^{\frac{1}{\vartheta}}$. If $r\le r_*$, then
the expansion centered at $(x,t)$ is valid on the slice at time $s$.
Compare it there with the expansion centered at $(x,s)$, first at
$x$ and then at $x+(\frac{r}{2})e_i$. Subtracting these two comparisons
cancels the constant terms and gives
\begin{equation}\label{eq:FL-gradient-time}
 |Du(x,t)-Du(x,s)|\le \mathrm{C}r^\beta
 \le \mathrm{C}|t-s|^{\frac{\beta}{\vartheta}}.
\end{equation}
Evaluating the later expansion at $(x,s)$ itself also gives
\begin{equation}\label{eq:FL-value-time}
 |u(x,t)-u(x,s)|\le \mathrm{C} r^{1+\beta}
 \le \mathrm{C}|t-s|^{\frac{1+\beta}{\vartheta}}.
\end{equation}
The small-time condition holds whenever
$\tau\le \frac{r_*^{\frac{9}{4}}}{2}$, uniformly in $\gamma$.
For larger distances in \eqref{eq:FL-gradient-space} or
\eqref{eq:FL-gradient-time}, use $|Du|\le \frac{\mathrm{C}A}{R}$. For larger time differences in \eqref{eq:FL-value-time}, use $|u|\le U$.
Thus all three inequalities hold throughout the indicated domain,
with a constant depending only on the data in the theorem.

These estimates also justify the top time $t=0$. For each $x$, the
uniform time modulus in \eqref{eq:FL-gradient-time} gives a limit
$p_0(x)=\lim_{t\uparrow0}Du(x,t)$. The convergence is uniform in $x$,
and \eqref{eq:FL-gradient-space} passes to this limit. At a fixed small
radius, take $t \to 0$ in the spatial part of
\eqref{eq:FL-physical-expansion}. Continuity of $u$ gives
\[
 |u(y,0)-u(x,0)-p_0(x)\cdot(y-x)|\le \mathrm{C}_*|y-x|^{1+\beta}
\]
after increasing the constant if necessary. Hence $Du(x,0)$ exists and
equals $p_0(x)$. The spatial, temporal and value estimates all extend
to $t=0$ by continuity.

Finally, \eqref{eq:FL-time-conversion} and the definition of
$\vartheta$ imply
\[
 \beta>\alpha,   \quad
 \frac{\beta}{\vartheta}\ge\frac{\alpha}{2}, \quad
 \frac{1+\beta}{\vartheta}\ge\frac{1+\alpha}{2}.
\]
The spatial gradient estimate, the temporal gradient estimate and the
temporal estimate for $u$ therefore give precisely
\[
 \|u\|_{C^{1+\alpha,\frac{1+\alpha}{2}}(Q_{\frac{1}{2}})}\le \mathrm{C}.
\]
The dependence of $A$, $R$ and $\overline K$ in
\eqref{eq:FL-data-normalization} shows that $ \mathrm{C} $ depends only on $n$, $\alpha$, $\lambda$, $U$ and $H$, as claimed.
\end{proof}

\end{document}